\newtheorem{theorem}{Theorem}[section]
\newtheorem{definition}[theorem]{Definition}
\newtheorem{notation}[theorem]{Notation}
\newtheorem{notations}[theorem]{Notations}
\newtheorem{lemma}[theorem]{Lemma}
\newtheorem{corollary}[theorem]{Corollary}
\newtheorem{proposition}[theorem]{Proposition}
\newtheorem{note}[theorem]{Note}
\newtheorem{remark}[theorem]{Remark}
\newtheorem{conditions}[theorem]{Conditions}
\newtheorem{example}[theorem]{Example}
\newtheorem*{plan}{Plan of proof}
\DeclareMathOperator{\Hom}{Hom}
\DeclareMathOperator{\colim}{colim}
\renewcommand{\leq}{\leqslant}
\renewcommand{\geq}{\geqslant}
\renewcommand{\phi}{\varphi}
\renewcommand{\kappa}{\varkappa}
\newcommand{\bb}{\mathbb}
\newcommand{\A}{\mathbb{A}}
\newcommand{\Gm}{{\mathbb{G}_m}}
\newcommand{\ZF}{\operatorname{\mathbb{Z}F}}
\newcommand{\Fr}{\operatorname{Fr}}
\newcommand{\id}{\operatorname{id}}
\newcommand{\cc}{\mathcal}
\newcommand{\F}{\operatorname{F}}
\newcommand{\pair}{\mathbb}
\newcommand{\spc}{}
\renewcommand{\l}{\left}
\renewcommand{\r}{\right}
\renewcommand{\phi}{\varphi}
\renewcommand{\leq}{\leqslant}
\renewcommand{\geq}{\geqslant}
\renewcommand{\epsilon}{\varepsilon}
\renewcommand{\kappa}{\varkappa}
\newcommand{\op}{{\textrm{\rm op}}}
\begin{document}

\sloppy

\title{A motivic Segal theorem for open pairs of smooth schemes over an infinite perfect field}
\author{Aleksei Tsybyshev}
\thanks{The author thanks professor I. Panin for the statment of the problem and valuable discussions, and also A. Mingazov, for his notes~\cite{Min} on this problem, which contain a proof of the main theorem in the case of a smooth closed subscheme, and which inspired some of the arguments in the proof of the main theorem.}

\begin{abstract}
V. Voevodsyky laid the groundwork of delooping motivic spaces in order to provide a new, more computation-friendly, construction of the stable motivic category $SH(k)$,
G. Garkusha and I. Panin made that project a reality, while collaborating with A. Ananievsky, A. Neshitov and A. Druzhinin.
In particular, G. Garkusha and I. Panin proved that for an infinite perfect field $k$ and any $k$-smooth scheme $X$
the canonical morphism of motivic spaces
$C_*Fr(X)\to \Omega^{\infty}_{\bb{P}^1} \Sigma^{\infty}_{\bb{P}^1} (X_+)$
is Nisnevich-locally a group-completion.

In the present work, a generalisation of that theorem to the case of smooth open pairs
$(X,U),$ where $X$ is a $k$-smooth scheme, $U$ is its open subscheme intersecting each component of $X$ in a nonempty subscheme. We claim that in this case the motivic space $C_*Fr((X,U))$ is Nisnevich-locally connected, and the motivic space morphism
$C_*Fr((X,U))\to \Omega^{\infty}_{\bb{P}^1} \Sigma^{\infty}_{\bb{P}^1} (X/U)$ is Nisnevich-locally a weak equivalence.
Moreover, we show that if the codimension of $S=X-U$ in each component of $X$ is greater than $r \geq 0,$
the simplicial sheaf $C_*Fr((X,U))$ is locally $r$-connected.

\end{abstract}

\maketitle{}

\tableofcontents

\section{Introduction}
Recall that in~\cite[Section~2]{VoNotes} Voevodsky defined for an arbitrary field $k$ a category of framed correspondences
$Fr_*(k)$, whose objects are the same as the objects of $Sm/k$, and the set of morphisms
$Fr_*(X,Y)=\sqcup_{n\geq 0} Fr_n(X,Y)$ is defined as the set of some geometric data.
In the same notes, framed presheaves are defined as presheaves on the category $Fr_*(k),$
and the notion of a Nisnevich sheaf with framed transfers is introduced. Those are the presheaves with framed transfers, which become Nisnevich sheaves in restriction to $Sm/k$. In the same notes, many basic properties both of the category $Fr_*(k)$ and presheaves or Nisnevich sheaves with transfers are proved.

Starting from these fundamental notes by Voevodsky, a theory of framed motives was constructed in \cite{GP}.
In particularm it was proved in~\cite{GP} that for any infinite perfect field $k$ and any $k$-smooth scheme $X$
the canonical morphism of spaces
$C_*Fr(X)\to \Omega^{\infty}_{(\bb{P}^1,\infty)} \Sigma^{\infty}_{(\bb{P}^1,\infty)} (X_+)$
is Nisnevich-locally a group-completion. If the $k$-smooth scheme $X$ is substituted with a simplicial $k$-smooth scheme $Y$ such that the motivic space $C_*Fr(Y)$ is locally connected, the canonical morphism of motivic spaces
\[C_*Fr(Y)\to \Omega^{\infty}_{(\bb{P}^1,\infty)} \Sigma^{\infty}_{(\bb{P}^1,\infty)} (Y_+)\]
is Nisnevich-locally a weak equivalence.

\section{Statement of the main theorem and its reduction to the other statements}

First we lay out some notatons, conventions, and recall the necessary definitions. These definitions, as well as the definitions of other framed objects, are taken from~\cite[Section~2]{VoNotes} and~\cite{GP}. 

Throughout the paper, let $k$ be an infinite perfect field, and $char(k) \neq 2.$

\begin{note}
Owing to~\cite[Theorem 1.1]{DP}, one can get rid of the characteristic restriction on $k$.
\end{note}
For the reader familiar with the general notions and notation of the theory of framed correspondences, who wants to go straight to the statements of the results, ignoring the preliminaries, the list of the notations that are not commonly used in the same way as in this paper is given below.

\begin{notations}\label{n:general}
The object (Usually $\pair{B}$ or $\pair{T}$), written in a blackboard bold font, denotes an open pair of smooth varieties. In case of $\pair{B}$ it denotes some $(X,U) \in SmOp(Fr_0(k)),$ where $U=X-S,$ and in case of $\pair{T}$ this is a specific object --- the pair $(\A^1,\Gm).$ (see Definition~\ref{definition_pair}, Notation~\ref{notation_T_pair})

The object (Usually $\spc{B}$ or $\spc{T}$), written with the same letter in normal font, denotes a pointed Nisnevich sheaf one gets from the corresponding blackboard bold pair by applying the functor $spc.$ In case of $\spc{B}$ this is $X/U,$ and in case of $\spc{T}$ this is the usual sheaf $\A^1/\Gm.$ (see Notation~\ref{notation_spc}, Note~\ref{note_T})

The functor $Fr_m(-,\pair{B})$ is defined geometrically (see Definition~\ref{d:Fr-pair}), but, owing to the Voevodsky Lemma, it can be understood as $\cc{F}r_m(-,\spc{B}).$ (see Note~\ref{note_lemma_Voev}

Any smooth variety $X$ is also understood as $(X,\emptyset).$ With such an identification, we get $spc(X)=Hom_{Sm}(-,X_+).$ (See Note~\ref{note_X_+})

Most of the time in this paper, the targets of framed correspondences are not motivic spaces, but instead simplicial pairs, objects of $\Delta^{op}SmOp(Fr_0(k)).$
\end{notations}

There are two important conventions. Let $sShv_{\bullet}(Sm/k)$ be the category of simplicial Nisnevich sheaves on the category of $k$-smooth schemes $Sm/k$
(it is also often called the category of motivic spaces).
According to~\cite[2.7]{Jar1}, the category $sShv_{\bullet}(Sm/k)$ of simplicial Nisnevich sheaves is endowed with an injective local model structure, in which the cofibrations are the monomorphisms,
and the weak equivalences are the local weak equivalences. Choose a functorial fibrant replacement
$\mathcal X\mapsto \mathcal X_f$
with respect to the {\it injective model structure}
on the category $sShv_{\bullet}(Sm/k)$ and use it throughout the paper.

By the motivic model structure on the category of simplicial pointed Nisnevich sheaves we understand the model category of Morel and Voevodsky from~\cite{MV}.

Consider a pointed Nisnevich sheaf (not to be confused with the notation for open pairs below) $(\bb{P}^1,\infty).$
V. Voevodsky in~\cite[Раздел 5]{VoCongr} defined motivic spectra, in particular, $(\bb{P}^1,\infty)-$spectra.
In such papers as~\cite{GP} and~\cite{GNP}, the same objects are called $\bb{P}^1-$spectra,
and the pointed sheaf $(\bb{P}^1,\infty)$ is denoted $\bb{P}^{\wedge 1}.$ In the name of internal consistency, and following Voevodsky's original definition, in this paper we use the notation $(\bb{P}^1,\infty).$
The symbol $T$ denotes, customarily, the pointed Nisnevich sheaf $\pair{A}^1/(\pair{A}^1-\{0\})$.
\begin{definition} \label{definition_pair}
$SmOp(Fr_0(k))$ is the category of pairs $\pair{B}=(X,U),$ where $X$ is a $k$-smooth scheme, and $U$ its open subscheme.
The morphisms from $(X,U)$ to $(X',U')$ are those level $0$ framed correspondences from $X$ to $X'$ which induce correspondences from $U$ to $U'.$

The formula $(X,U) \wedge (X',U') = (X \times X',(X \times U' \cup U \times X'))$ defines a symmetric monoidal category structure on $SmOp(Fr_0(k)).$
\end{definition}

We give a motivation for refining (changing) one of the basic notations from the paper \cite{GP}.
In \cite[Definition 2.5, 2.8]{GP} for each pair $(X,X-S)\in SmOp(Fr_0(k))$, each $U\in Sm/k$ and each $m\geq 0$ the pointed sets $Fr_m(U,X/(X-S)$  
and $Fr(U,X/(X-S))$ are defined. $Fr(-,X/(X-S))$ is also checked, and confirmed, to be a framed presheaf, and even Nisnevich sheaf.

In~\cite[Definition 5.2.(2)]{GP}, taking the framed motive $M_{fr}(X/(X-S))$ of the pair $(X,X-S)\in SmOp(Fr_0(k))$ is defined as a covariant functor from the category $SmOp(Fr_0(k))$  with values in the category of $S^1$-spectra in the category $sShv_{\bullet}(Sm/k)$. This functor on the category $SmOp(Fr_0(k))$
is constructed using simplicial sheaves of the type $Fr(-,X/(X-S))$, however the symbol $X/(X-S)$ is often used to denote the factor-sheaf, not the pair $(X,X-S)$. To avoid mixing together the two concepts and their notations, we decided to introduce for a pair
$(X,X-S)\in SmOp(Fr_0(k))$
the following notation.
\begin{notation}\label{Key_Notation}
Introduce the new notation: \\
we write $Fr_m(U,(X,X-S))$ instead of $Fr_m(U,X/(X-S))$, \\
$Fr(U,(X,X-S))$ instead of $Fr(U,X/(X-S))$,\\
where $Fr_m(U,X/(X-S))$ 
and $Fr(U,X/(X-S)$ are defined in \cite[Definition 2.5]{GP}, 
and \cite[Definition 2.8]{GP}
respectively.
\end{notation}
The new notations allow us to define a covariant functor
\[Fr: SmOp(Fr_0(k))\to Shv_{\bullet}(Fr_*(k))\]
with the rule $Fr(X,X-S)=Fr(-,(X,X-S))$. Note that a priori we might have another covariant functor
\[\mathcal Fr \circ spc: SmOp(Fr_0(k))\to Shv_{\bullet}(Fr_*(k)),\]
taking the pair $(X,X-S)$ to the framed sheaf $\mathcal Fr(-, X/(X-S))$.
Here $X/(X-S)$ is the pointed Nisnevish quotient sheaf,  and for an arbitrary pointed Nisnevich sheaf $\mathcal F$ $\mathcal Fr(-,\mathcal F)$ denotes the framed Nisnevich sheaf defined in~\cite[Definition 3.10]{GP}).

\begin{note} \label{note_lemma_Voev}
We stress that according to the corollary~\cite[Isomorphisms (6)]{GP} from the Voevodsky Lemma, these two functors
\[Fr,~(\mathcal Fr \circ spc): SmOp(Fr_0(k))\rightrightarrows Shv_{\bullet}(Fr_*(k))\]
are canonically isomorphic. In particular, they have the same properties.
Sometimes one of them is more useful to work with, sometimes the other, but both functors are defined on pairs, specifically on the category $SmOp(Fr_0(k))$.
\end{note}

We recall here~\cite[Definitions 2.5, 2.8]{GP}, using the changed notations of~\ref{Key_Notation}.

\begin{definition}\label{d:Fr-pair}\cite[Definitions 2.5, 2.8]{GP}

{\rm
(I) Let $Y$ be a scheme, let $S\subset Y$ be a closed subset, and let $U$ be a scheme. An {\it explicit level $m\geq 0$ framed correspondence}
 from $U$ to $(Y,Y-S)$} consists of tuples:
   $$(Z,W,\phi_1,\ldots,\phi_{m};g:W\to Y),$$
where $Z$ is a closed subset in $U\times\bb A^m$, finite over $U$ (understood as the underlying reduced scheme),
$W$ is an \'etale neighbourhood of $Z$ in $U\times\bb A^m$,
$\phi_1,\ldots,\phi_{m}$ are regular functions on $W$, $g$ is a regular map such that $Z=Z(\phi_1,\ldots,\phi_{m})\cap g^{-1}(S)$ (set-theoretically).
The scheme $Z$ is called the {\it support \/} of the explicit framed correspondence. we also use fours $c = (Z,W,\phi;g)$
to denote explicit framed correspondences.

(II) Two explicit  level $m$ framed correspondences $(Z,W,\phi;g)$ and
$(Z',W',\phi';g')$ are {\it equivalent\/} if 
$Z=Z'$ and there exists an \'etale neighbourhood $W''$ of $Z$ in
$W\times_{\bb A^m_U}W',$ such that  $\phi\circ pr$ coincides with 
$\phi'\circ pr'$ and the morphism $g\circ pr$ coincides with $g'\circ
pr'$ on $W''$.

(III) A {\it level $m$ framed correspondence from $U$ to
$(Y,Y-S)$\/} is an equivalence class of explicit level $m$ framed correspondences from $U$ to $(Y,Y-S)$. Denote by
$Fr_m(U,(Y,Y-S))$ the set of level $m$ framed correspondences $m$ from $U$ to $(Y,Y-S)$. We view it as a pointed set with distinguished point g $0_{(Y,Y-S),m}$ given by the explicit framed correspondence $(Z,W,\phi;g)$ with $W=\emptyset$.

(IV) If $S=Y,$ the pointed set $Fr_m(U,(Y,Y-S))$
coincides with the set
$Fr_m(U,Y)$ of level $m$ framed correspondences from $U$ to $Y$.

(V)
Denote by
$\sigma_{(Y,(Y-S))}: Fr_m(U,(Y,(Y-S)))\to Fr_{m+1}(U,(Y,(Y-S)))$
the map which takes $\Phi = (Z,W,\phi;g)$ to $(Z\times \{0\},W\times \A^1,\phi\circ pr_W,pr_{\A^1};g)$.
Set $\sigma:=\sigma_{(Y,(Y-S))}$ and, following~\cite[Definition 2.8]{GP}, call the set 
\[Fr(U,(Y,(Y-S))):= 
\colim[Fr_0(U,(Y,(Y-S)))\xrightarrow{\sigma} Fr_1(U,(Y,(Y-S))) \xrightarrow{\sigma} \dots ] \]
the set of \textit{stable framed correspondences} from $U$ to $(Y,(Y-S))$.

\end{definition}

\begin{definition} \label{ZF}
Having given this definition, one can readily extend the~\cite[Definitions 8.3, 8.4, 8.5, and 8.7]{GP} to this situation, yielding linear framed correspondences $\bb ZF_n(B,(X,U))$, with an external product on them, the suspension map $\Sigma:\bb ZF_n(B,(X,U)) \to \bb ZF_{n+1}(B,(X,U)),$ the stable linear framed correspondences $\bb ZF(B,(X,U)),$ and the linear framed movive $LM_{fr}((X,U)).$

 Note that $\bb ZF_n(B,(X,U))$ is a free abelian group with basis consisting of framed correspondences with connected support. We call that basis set $F_n(B,(X,U)).$ That object is 'bad', in the sense that it does not behave well under base change of $B$ and composition, but it is still useful for us.
\end{definition}

\begin{definition}
(\cite[Definitions 2.4, 2.5, 2.11, Remark 2.12]{GPHoInv}) 
The exterior composition of framed correspondences (or linear framed correspondences) between objects of $Sm/k$ defines categories:

$Fr_*$ is the category with objects $Ob(Sm/k)$ and morphisms 
\[Fr_*(X,Y)=\coprod \limits_{m \geq 0} Fr_m(X,Y).\]

$Fr_+$ is the category with objects $Ob(Sm/k)$ and morphisms 
\[Fr_+(X,Y)=\bigvee \limits_{m \geq 0} Fr_m(X,Y).\]

$\bb ZF_*$ is the category with objects $Ob(Sm/k)$ and morphisms 
\[\bb ZF_*(X,Y)=\bigoplus \limits_{m \geq 0} \bb ZF_m(X,Y).\]

A \textit{framed presheaf} is a presheaf on the category $Fr_+.$ A framed presheaf $\cc F$ of abelian groups is called \textit{radditive} if $\cc F (X_1 \coprod X_2)=\cc F (X_1) \oplus \cc F(X_2).$ 

This is the same as the presheaf $\cc{F}$ coming from a presheaf on the category $\bb ZF_*.$  
\end{definition}

We also need the following notion, due to Voevodsky, given in~\cite{GPHoInv}
\begin{definition}(=\cite[Definition 2.7]{GPHoInv})
A $Fr_+$-presheaf $\cc F$ of Abelian groups is
stable if for any $k$-smooth variety the pull-back map
$\sigma^*_X: \cc F(X) \to \cc F(X)$
equals the identity map, where
$\sigma_X=(X\times 0, X\times \bb A^1, t; pr_X) \in Fr_1(X,X)$.
In turn, $\cc F$ is quasi-stable if for any $k$-smooth variety the pull-back map
$\sigma^*_X: \cc F(X) \to \cc F(X)$
is an isomorphism.
\end{definition}


\begin{definition}
Let $Shv_{\bullet}(Sm/k)$ be the category of pointed Nisnevich sheaves on $Sm/k$.
There is a functor $spc: SmOp(Fr_0(k))\to Shv_{\bullet}(Sm/k)$ taking the pair $(X,U)$ to the quotient  pointed Nisnevich sheaf $X/U$ with the distinguished point $U/U$. If $U=\emptyset$, then by definition $X/U=X_+$.

The functor $spc: SmOp(Fr_0(k))\to Shv_{\bullet}(Sm/k)$ induces a functor
$spc: \Delta^{op}SmOp(Fr_0(k))\to sShv_{\bullet}(Sm/k)$
taking the object $[n]\mapsto (Y_n,U_n)$ to the simplicial pointed Nisnevich sheaf
$[n]\mapsto (Y_n/U_n)$.
\end{definition}

\begin{notation}\label{notation_spc}
For a pair denoted with a blackboard bold font, such as $\pair{B}=(X,U),$ we use the same letter in normal font, such as $B$, to denote the quotient pointed sheaf $X/U$, with distinguished point $U/U.$
\end{notation}

\begin{note} \label{note_X_+}
Each smooth $X \in Fr_0(k)$ has a canonical corresponding pair $(X,\emptyset) \in SmOp(Fr_0(k)).$
By abuse of notation, that pair will also be denoted by $X.$

This map gives an inclusion of monoidal categories: $(X \times X', \emptyset)=(X,\emptyset) \wedge (X',\emptyset).$

With such notation, $spc(X)$ is the sheaf $X_+$ in the usual sense.
\end{note}

\begin{notation} \label{notation_T_pair}
The pair $(\bb{A}^1,\bb{G}_m)$ is especially important. We denote it by $\pair{T}$
\end{notation}

\begin{note}\label{note_T}
$spc(\pair{T})$ is the sheaf $\spc{T}=\bb{A}^1/\bb{G}_m$ introduced in Morel and Voevodsky's paper~\cite{MV}.
\end{note}

For a sheaf $U\mapsto {\cc F}(U)$ on $Fr_*(k)$ there is a simplicial sheaf $C_* (\cc F)$ equal to $U\mapsto {\cc F}(\Delta^{\bullet}\times U)$, where $\Delta^{\bullet}$ is the standard cosimplicial scheme $\Delta^n \mapsto \A^n$.
The following key definition is given in~\cite[Section 4]{GP}.
\begin{definition}
For each pair $\pair{B}=(X,U) \in SmOp(Fr_0(k))$
we define the $(\bb{P}^1,\infty)$-spectrum $M_{(\bb{P}^1,\infty)}(\pair{B})$ as follows.
\[M_{(\bb{P}^1,\infty)}(\pair{B})=(C_* Fr (-,\pair{B}), C_* Fr (-,\pair{B} \wedge \pair{T}), C_* Fr (-,\pair{B} \wedge \pair{T}^{\wedge 2})\cdots )\]
where the structural morphisms are $C_*(\sigma_n)$,
and where
$\sigma_n: Fr(-,\pair{B}\wedge \pair{T}^n)\to \underline{\Hom}({(\bb{P}^1,\infty)},Fr(-,\pair{B}\wedge \pair{T}^{n+1}))$
are defined in \cite[Section 4]{GP}.
\end{definition}

There is a canonical morphism of $(\bb{P}^1,\infty)$-spectra:
$$\kappa: \Sigma^{\infty}_{(\bb{P}^{1},\infty)}B\to M_{(\bb{P}^1,\infty)}(\pair{B}),$$
given by the identity morphism $\id_{\pair{B}}\in Fr_0(\pair{B},\pair{B})$.
Take the fibrant replacement
$C_*Fr(-,\pair{B}\wedge \pair{T}^n)\to C_*Fr(-,\pair{B}\wedge \pair{T}^n)_f$ of each of the motivic spaces with respect to the injective local model structure.
We then get the $(\bb{P}^1,\infty)$-spectrum
   $$M_{(\bb{P}^1,\infty)}(\pair{B})_f=(C_*Fr(-,\pair{B})_f,C_*Fr(-,\pair{B}\wedge  \pair{T})_f,C_*Fr(-,\pair{B}\wedge  \pair{T}^2)_f, ... ).$$
Note that $M_{(\bb{P}^1,\infty)}(\pair{B})_f$ is the fibrant replacement of the
$(\bb{P}^1,\infty)$-spectrum $M_{(\bb{P}^1,\infty)}(\pair{B})$ with respect to the levelwise injective local model structure on the category of $(\bb{P}^1,\infty)$-spectra.
Let
   $$\kappa_f:\Sigma^{\infty}_{(\bb{P}^1,\infty)}B\to M_{(\bb{P}^1,\infty)}(\pair{B})\to M_{(\bb{P}^1,\infty)}(\pair{B})_f$$
be the composition of the morphisms.

To state the third item of the Theorem below, we recall another definition.
For a $(\bb P,\infty)$-spectrum $E$ let $\mathcal E$ be the $\Omega$-spectrum motivically stably equivalent to $E$.
By $\Omega^{\infty}_{(\bb P^1,\infty)}(E)$ we mean the motivic space
$\cc E_0$ (the zeroth space of the $(\bb P,\infty)$-spectrum $\mathcal E$).
If $E=\Sigma_{(\bb P^1,\infty)}^\infty\cc X$ is the $(\bb P^1,\infty)$-suspension spectrum of the motivic space $\cc X$, we take $\Omega^{\infty}_{(\bb P^1,\infty)}\Sigma^{\infty}_{(\bb P^1,\infty)}(\cc X)$,
to mean
$\Omega^{\infty}_{(\bb P^1,\infty)}(E)$.\label{ominfty}

\begin{theorem} \label{Segal_Thm_II} (cf.~\cite[Theorem 4.1]{GP})
Let $X$ be a $k$-smooth scheme, $S$ be a closed subscheme in $X,$ not containing whole connected components of $X$.
Consider the pair $\pair{B}=(X,X-S) \in SmOp(Fr_0(k))$ and the corresponding motivic space $\spc{B}=X/(X-S)\in Shv_{\bullet}(Sm/k)$
The following is true:
\begin{enumerate}

\item The morphism $\kappa_f: \Sigma^{\infty}_{(\bb{P}^1,\infty)}(\spc{B})\to M_{(\bb{P}^1,\infty)}(\pair{B})_f$
is a stable motivic equivalence of  $(\bb{P}^1,\infty)$-spectra.

\item The$(\bb{P}^1,\infty)$-spectrum $M_{(\bb{P}^1,\infty)}(\pair{B})_f$ is a motivically fibrant
$\Omega$-spectrum. This means that for each integer
$n \geq 0$, each motivic space $C_*(Fr(-,\pair{B} \wedge \pair{T}^{n}))_f$ is motivically fibrant in the motivic model category of Morel and Voevodsky~\cite{MV} of simplicial pointed Nisnevich sheaves,
and the structural morphism

   \[C_*(Fr(-,\pair{B}\wedge \pair{T}^{n}))_f\to \Omega_{(\bb{P}^1,\infty)}(C_*(Fr(-,\pair{B}\wedge \pair{T}^{\wedge n+1}))_f)\]
	
is a shemewise weak equivalence.

\item The canonical morphism of simplicial pointed Nisnevich sheaves

\[C_*Fr(-,(X,X-S))_f\to \Omega^{\infty}_{(\bb{P}^1,\infty)} \Sigma^{\infty}_{(\bb{P}^1,\infty)} (X/(X-S))\]

is a schemewise homotopy equivalence.
In particular, for any finitely generated field extension $K/k$, the canonical morphism of simplicial sets

\[C_*Fr(Spec(K),(X,X-S))\to \Omega^{\infty}_{(\bb{P}^1,\infty)} \Sigma^{\infty}_{(\bb{P}^1,\infty)} (X/(X-S)) (Spec(K))\]

is a weak equivalence.
\end{enumerate}
\end{theorem}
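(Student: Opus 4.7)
\begin{plan}
The strategy is to follow the architecture of~\cite[Theorem 4.1]{GP}, treating the pair $\pair{B}=(X,X-S)$ in place of $X_+$, and to isolate the new geometric ingredient needed to pass from that setting to ours, namely the Nisnevich-local $0$-connectedness of $C_*Fr(-,\pair{B})$.

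First I would observe that items (1)--(3) are tightly coupled. If item (2) holds, then item (3) is the evaluation of item (2) at level $0$ combined with the identification of $\Omega^{\infty}_{(\bb{P}^1,\infty)}\Sigma^{\infty}_{(\bb{P}^1,\infty)}(B)$ with the zeroth space of $M_{(\bb{P}^1,\infty)}(\pair{B})_f$ provided by item (1). Item (1) in turn follows from item (2) together with the assertion that $\kappa_f$ induces an isomorphism on $\pi_0^{\nis}$; this last statement is precisely the Nisnevich-local connectedness of $C_*Fr(-,\pair{B})$, since under item (2) both spectra then have vanishing negative stable homotopy sheaves and matching $\pi_0$.

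The bulk of the work therefore reduces to two claims. \textbf{Claim A:} each level $C_*Fr(-,\pair{B}\wedge \pair{T}^n)_f$ is motivically fibrant and the structural map
\[
C_*Fr(-,\pair{B}\wedge \pair{T}^n)_f \to \Omega_{(\bb{P}^1,\infty)}C_*Fr(-,\pair{B}\wedge \pair{T}^{n+1})_f
\]
is a schemewise weak equivalence. \textbf{Claim B:} the simplicial Nisnevich sheaf $C_*Fr(-,\pair{B})$ is locally connected. For Claim A I would adapt the framed motives machinery of~\cite{GP} to pairs. The presheaf $Fr(-,\pair{B}\wedge \pair{T}^n)$ is radditive and quasi-stable by the pair version of the Voevodsky lemma (see Note~\ref{note_lemma_Voev}), so the strict homotopy invariance theorem of~\cite{GPHoInv} applies to $C_*Fr(-,\pair{B}\wedge \pair{T}^n)$; this upgrades Nisnevich local fibrancy of each level to motivic fibrancy. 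The $\Omega$-spectrum property then reduces, via a cancellation theorem applied with $\pair{T}$ in place of the usual Tate object, to showing that $Fr(-,\pair{B}\wedge \pair{T}^n) \to \underline{\Hom}((\bb{P}^1,\infty),Fr(-,\pair{B}\wedge \pair{T}^{n+1}))$ becomes a local weak equivalence after applying $C_*$, which is formal once cancellation is in hand.

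The main obstacle, and the genuine new content, is Claim B. One must show that for every Henselian local essentially smooth $U$, any vertex $c=(Z,W,\varphi_1,\dots,\varphi_m;g)$ of $C_*Fr(U,\pair{B})$ can be connected by a chain of edges in $Fr(U\times\Delta^\bullet,\pair{B})$ to the basepoint. The plan is to exploit the hypothesis that $X-S$ meets every component of $X$ to construct, after possibly stabilising $c$ by $\sigma$, an $\A^1$-homotopy of the structural map $g\colon W\to X$ to a map landing in $X-S$; once $g$ avoids $S$, the defining identity $Z=Z(\varphi_1,\dots,\varphi_m)\cap g^{-1}(S)$ forces $Z=\emptyset$, and the correspondence equals the distinguished point. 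The hard step is producing this homotopy compatibly with the framing data, i.e.\ a pair-version of a Panin--Voevodsky style moving argument that deforms $g$ inside an \'etale neighbourhood of $Z$ in $U\times\A^m$ so as to push $g^{-1}(S)$ off of $Z(\varphi)$. I expect this to require both the codimension hypothesis on $S$ and careful bookkeeping of the framing functions, with the assumption $\operatorname{char}k\neq 2$ entering through the tame/quadratic formalism inherited from~\cite{GP}. Once Claim B is proved, the remaining verifications follow the template of~\cite{GP} with only notational modifications.
\end{plan}
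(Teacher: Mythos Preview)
Your overall decomposition---reduce to item (2), and within (2) separate motivic fibrancy and the $\Omega$-spectrum property (Claim A) from local connectedness of $C_*Fr(-,\pair{B})$ (Claim B)---matches the paper's architecture, and your plan for Claim B is in the right spirit. But there is a genuine gap in Claim A.

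You write that the $\Omega$-spectrum property ``reduces, via a cancellation theorem applied with $\pair{T}$,'' and is ``formal once cancellation is in hand.'' The cancellation theorem of~\cite{AGP} is proved for framed motives of \emph{simplicial smooth schemes}, not for pairs $\pair{B}=(X,X-S)$; one cannot apply it directly to $M_{fr}(\pair{B}\wedge\pair{T}^{\wedge n})$. The paper instead passes through the cone $c(\pair{B})=X//U\in\Delta^{\op}Fr_0(k)$ via the morphism $\alpha_{\pair{B}}\colon X//U\to\pair{B}$, and the heart of the argument is Theorem~\ref{Quotient-Cone}: the induced map
\[
M_{fr}\big((X//U)\wedge(\A^1//\Gm)^{\wedge n}\big)\to M_{fr}(\pair{B}\wedge\pair{T}^{\wedge n})
\]
is a local stable weak equivalence. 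Only after this comparison is established can one invoke cancellation on the simplicial-scheme side (the top row of the commutative square in Subsection~2.5) and transport the conclusion back to pairs. Proving Theorem~\ref{Quotient-Cone} is not formal: it is reduced (Section~3) to the linear Cone theorem~\ref{genCone}, asserting that $C_*\ZF(-,\pair{M}\wedge U)\to C_*\ZF(-,\pair{M}\wedge X)\to C_*\ZF(-,\pair{M}\wedge\pair{B})$ is a distinguished triangle, and this in turn requires a new moving lemma (Theorem~\ref{th:moving}, Section~\ref{Moving}) showing that the inclusion $C_*\ZF^{qf}(-,\pair{M},\pair{B})\hookrightarrow C_*\ZF(-,\pair{M}\wedge\pair{B})$ is a quasi-isomorphism. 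This block of work---what the paper isolates as ``Group 4''---is of difficulty comparable to the connectedness you identify, and your proposal does not account for it.

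A minor point: your derivation of item (1) from item (2) plus a $\pi_0$ match is not a valid argument as stated (matching $\pi_0$ and vanishing negative homotopy sheaves do not by themselves give a stable equivalence). This is harmless, however, since item (1) is already established for arbitrary pairs in~\cite[Subsection 9.2]{GP}, which the paper simply cites.
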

In addition, one has the following Proposition.
\begin{proposition} \label{p:connectedness}
Let $r>0,$ and let $(X,X-S) \in SmOp(Fr_0(k))$ be such a pair that $codim_{X_i}(S \cap X_i)>r$ in each connected 
(Or, which is the same for a $k$-smooth scheme, irreducible) component $X_i \subseteq X.$ Then the simplicial sheaf $C_*Fr(-,(X,X-S))$
is locally $r$-connected in the Nisnevich topology.
\end{proposition}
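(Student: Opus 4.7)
\begin{plan}
The plan is to show, via a stalk-wise argument combined with a transversality $\A^1$-deformation, that the homotopy sheaves $\pi_i^{\mathrm{Nis}}(C_*Fr(-,(X,X-S)))$ vanish for $0 \leq i \leq r$. Since local $r$-connectedness of a simplicial Nisnevich sheaf can be checked on stalks, I would fix an essentially smooth Henselian local $k$-scheme $U = \Spec(A)$ and prove $\pi_i(Fr(U \times \Delta^\bullet, (X, X-S))) = 0$ for $0 \leq i \leq r$. A class in $\pi_i$ is represented (after stabilizing the level via the suspension $\sigma$) by some $c = (Z, W, \phi_1, \ldots, \phi_m; g) \in Fr_m(U \times \Delta^i, (X, X-S))$ whose restriction to $\partial \Delta^i$ is the basepoint.

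The central observation is a codimension count. Since $Z$ is finite over $U \times \Delta^i$, one has $\dim Z = \dim U + i$, while $\dim W = \dim U + i + m$. The support sits inside $Z(\phi) \cap g^{-1}(S)$, and if $g$ were transverse to $S$ along $Z$ we would have $\mathrm{codim}_W g^{-1}(S) = \mathrm{codim}_X S > r \geq i$, forcing $\mathrm{codim}_W (Z(\phi) \cap g^{-1}(S)) \geq m + r + 1 > m + i$, i.e., $\dim(Z(\phi) \cap g^{-1}(S)) < \dim U + i$, contradicting the existence of a $(\dim U + i)$-dimensional $Z$. Hence any nontrivial $c$ is forced into a failure of transversality between $g$ and $S$, which is exactly the situation an $\A^1$-deformation of $g$ ought to repair.

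To make this concrete, I would construct---possibly after replacing $W$ by a smaller \'etale neighborhood of $Z$ in $U \times \Delta^i \times \A^m$ via a Gabber-style presentation lemma applied to $(W, g^{-1}(S))$---a regular map $G : W \times \A^1 \to X$ with $G|_{t=0} = g$ and $G|_{t=1}$ factoring through $X - S$, such that $\tilde Z := Z(\phi \circ pr_W) \cap G^{-1}(S)$ is finite over $U \times \Delta^i \times \A^1$. The codimension hypothesis supplies the geometric room: near the $(\dim U + i)$-dimensional image $g(Z) \subset X$, since $\dim S < \dim X - r \leq \dim X - i$, there is enough space to slide $g$ off $S$. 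Setting $\tilde c := (\tilde Z, W \times \A^1, \phi \circ pr_W; G) \in Fr_m(U \times \Delta^i \times \A^1, (X, X-S))$ and reading $\A^1$ as the cosimplicial $\Delta^1$, one concludes $[c] = 0$ in $\pi_i$, since $\tilde c|_{t=0} = c$ and $\tilde c|_{t=1}$ is the basepoint (empty support).

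The main obstacle is the construction of $G$ with guaranteed finiteness of $\tilde Z$ over $U \times \Delta^i \times \A^1$. A naive deformation can fail finiteness at points where the sliding creates new tangencies with $S$, and controlling this is a delicate geometric matter that uses the codimension hypothesis in an essential way. The rigorous execution is expected to extend to general (possibly singular) $S$ of sufficient codimension the pattern of Mingazov's~\cite{Min} treatment of the smooth closed subscheme case.
\end{plan}
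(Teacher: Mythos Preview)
Your transversality heuristic is correct in spirit, but the dimension count you sketch fails over a Henselian local $U$ of positive dimension. You write that $g(Z)$ has dimension $\dim U + i$, and then claim ``since $\dim S < \dim X - r \leq \dim X - i$, there is enough space to slide $g$ off $S$.'' For a generic one-parameter deformation of $g$ to push $g(Z)$ off $S$, what you actually need is something like $\dim g(Z) + \dim S < \dim X$, i.e.\ $(\dim U + i) + (\dim X - r - 1) < \dim X$, which forces $\dim U + i \leq r$. Since $i$ already ranges up to $r$, this breaks as soon as $\dim U > 0$. Your proposed Gabber-style presentation of $(W,g^{-1}(S))$ does not help here: the obstruction is the dimension of the base, not the local structure of $W$.

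The paper's proof repairs exactly this by an indirection you are missing. It first passes from $\pi_i$ to homology via the stable Hurewicz theorem (using that $M_{fr}((X,U))$ is locally an $\Omega$-spectrum once $C_*Fr(-,(X,U))$ is known to be locally connected). The point is that the homology presheaves $H_k(M_{fr}((X,U))) \cong H_k(C_*\ZF(-,(X,U)))$ are homotopy invariant quasistable $\ZF_*$-presheaves, so by the injectivity theorem of~\cite{GPHoInv} the restriction $H_k(\ldots)((\cc O_{B,b})^h) \hookrightarrow H_k(\ldots)(\Spec F)$ is injective. This reduces the vanishing to the case of a field $F$, where $\dim U = 0$ and your dimension count becomes valid. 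The paper then carries out essentially your deformation idea over $\Spec F$: after a preliminary moving step to make $\phi^{-1}(0)$ quasifinite (which you also omit), it chooses an \'etale map $e:X\to\A^d$, and a generic linear direction $v$ gives a homotopy $g+sv$ for which the incidence locus is finite, via the $line$ map to $\bb P^{d-1}_F$. The finiteness of $\tilde Z$---which you flag as the main obstacle---is thus secured by the generic-fiber argument, but only after the reduction to a zero-dimensional base.
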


Part $(1)$ of Theorem~\ref{Segal_Thm_II}   is proved in~\cite[Subsection 9.2]{GP}, in even greater generality.
Part $(3)$ directly follows from Parts $(1)$ and $(2)$.
Part $(2)$ requires proof in our case and is one of the main results of this paper.

The proof of this theorem uses the theory of framed motives, introduced and developed in~\cite{GP}. According to~\cite[Definition 5.2.(2)]{GP}, the framed motive functor is a functor
$$M_{fr}: \Delta^{op}SmOp(Fr_0(k))\to Sp_{S^1}(k),$$
where $Sp_{S^1}(k)$ is the category of $S^1$-spectra of pointed Nisnevich sheaves on $Sm/k$.
Recall that the framed motive $M_{fr}(\pair{B})$ of the pair $\pair{B}=(X,U)\in SmOp(Fr_0(k))$ is the Segal $S^1$-spectrum
$$M_{fr}(\pair{B})=(C_*Fr(-,\pair{B}),C_*Fr(-,\pair{B}\otimes S^1),C_*Fr(-,\pair{B}\otimes S^2),\ldots),$$
corresponding to the $\Gamma$-space
$K\mapsto C_*Fr(-,\pair{B}\otimes K)=Fr(\Delta^\bullet\times-,\pair{B}\otimes K)$.
The internal meaning of the framed motive $M_{fr}(\pair{B})$ of the pair $\pair{B}=(X,U)\in SmOp(Fr_0(k))$ is that in the motivic category
$SH_{S^1}(k)$ there is a canonical isomorphism
$\Omega^{\infty}_{\mathbb G}\Sigma^\infty_{\bb G}\Sigma^{\infty}_{S^1}(\spc{B}) \to M_{fr}(\pair{B})$
(see \cite[Introduction]{GP}).

\begin{note}\label{recollection}
In the course of proving the Theorem, we will need the following categories, functors and their natural transformations introduced in \cite{GP}:
the symmetric monoidal categories $SmOp(Fr_0(k))$, $\Delta^{\op}SmOp(Fr_0(k))$, the functor
$//: SmOp(Fr_0(k))\to \Delta^{\op}SmOp(Fr_0(k))$
taking $(X,U)$ to $X//U$ and its generalisations taken from \cite[Section 5]{GP} and \cite[Section 8]{GP}.
\end{note}

\begin{definition} \label{definition_//_f}

for any morphism $f:Y\to Z$ in $Fr_0(k)$ denote by
$Z//_f Y$ the simplicial object in the category $Fr_0(k)$ which is the coproduct in $\Delta^{op}Fr_0(k)$
of the diagram

	\[Z\xleftarrow f Y\hookrightarrow Y\otimes I.\]
	
Here for a pointed set $(K,*),$ the object $Y \otimes K\in Fr_0(k)$ is taken to be as in~\cite[раздел 8]{GP}.
For a pointed simplicial set $A_*,$ $Y \otimes A_*$
is taken to be as the simplicial object $(Y \otimes A)_n = Y \otimes A_n.$ 	
(Definition taken taken from~\cite[раздел 8]{GP})
\end{definition}


\begin{note}
For any $Y,Z\in Fr_0(k)$ one has:
$Z//_f Y$ is a simplicial $k$-smooth scheme.
\end{note}

\begin{note} \label{alpha}
There is a morphism natural in the pair $\pair{B}:=(X,U)\in SmOp(Fr_0(k)),$

 $\alpha_\pair{B}: X//U \to (X,U)=\pair{B}$ to the constant simplicial object
$[n]\mapsto (X,U)=\pair{B}$ in the category $\Delta^{\op}SmOp(Fr_0(k))$; the construction of $\alpha_\pair{B}$  is given in the Introduction to \cite{GNP},
where it is denoted by $\alpha$.
\end{note}

\begin{definition} \label{wedge}

If $X\in Sm/k$ and $x\in X$ is a $k$-rational point, we write$X^{\wedge 1}$ (implying $x$) for
$X//x$. In particular, we use
$\bb G_m^{\wedge 1}$ for $\bb G_m // 1$.

Considering $\Delta^{\op}Fr_0(k)$ as a full subcategory in the symmetric monoidal category
$\Delta^{\op}SmOp(Fr_0(k))$, we can take the $n$th monoidal degree of $X//x$ for each $n>0$.
We denote it by  $X^{\wedge n}$.
An example is $\bb G_m^{\wedge n}$.

\end{definition}

In the course of the proof we will need statements of four kinds:

\begin{enumerate}
\item{ Previously proved statements}
\item{ Statemets with proof transferrable from a previously proved case with insignificant change}
\item{ The local connectedness of spaces of the type $C_* Fr(-,(X,X-S))$}
\item{ The morphism $\alpha_{(X,U)}: X//U \to (X,U)$ in the category $\Delta^{\op}SmOp(Fr_0(k))$ from Note~\ref{alpha} induces a stable local equivalence $M_{fr}(\alpha_{(X,U)}): M_{fr}(X//U)\to M_{fr}(X,U)$ in the category $Sp_{S^1}(k)$.
}
\end{enumerate}
Statements of the third and fourth kind are the main difficulty in proving Theorem~\ref{Segal_Thm_II}.

Below are the statements from goups (2)-(4), as well as several statements from group (1)

\subsection{Group 1}
\begin{definition}\label{frmotive}(=~\cite[Definition 5.2.(1)]{GP})
The framed motive $\cc M_{fr}(\cc G)$ of a simplicial pointed Nisnevich sheaf
$\cc G$ is the Segal $S^1$-spectrum
$(C_*\cc Fr(-,\cc G),C_*\cc Fr(-,\cc G\wedge S^1),C_*\cc Fr(-,\cc G\wedge S^2),\ldots)$
associated with the 
$\Gamma$-space
$K\in\Gamma^{\op}\mapsto C_*\cc Fr(-,\cc G\wedge K)=\cc Fr(\Delta^\bullet_+\wedge-,\cc G\wedge K)$.
\end{definition}

\begin{note}\label{two_fr_motives_are_eq}
The framed motive
$M_{fr}(\pair{B}_{\bullet})$ of the simplicial pair $\pair{B}_{\bullet}\in\Delta^{op}SmOp(Fr_0(k))$
is equal to the framed motive $\cc M_{fr}(spc(\pair{B}_{\bullet}))$
of the motivic space $spc(\pair{B}_{\bullet})$. A priori two functors exist
$$\cc M_{fr}\circ spc, M_{fr}: \Delta^{op}SmOp(Fr_0(k))\rightrightarrows Sp_{S^1}(k).$$
However they coincide owing to Note~\ref{note_lemma_Voev}.
Sometimes one of them is useful, and sometimes the other.
\end{note}

Recall that in~\cite[Section 9.1]{GP} for a finitely presented (defined in the cited section) $A\in sShv_\bullet(Sm/k)$ canonical morphisms are introduced:
   \begin{equation}\label{smashell}
  C_*\cc Fr(L)\to\underline{\Hom}(A,C_*\cc Fr(L\wedge A)) \ \text{and} \  \cc M_{fr}(L)\to\underline{\Hom}(A,\cc M_{fr}(L\wedge A)).
   \end{equation}
They induce morphisms \begin{equation}\label{smashell_2}
   C_*\cc Fr(L)_f\xrightarrow{a_A} \underline{\Hom}(A,C_*\cc Fr(L\wedge A)_f),\  \text{and} \
   \cc M_{fr}(L)_f\xrightarrow{\alpha_A} \underline{\Hom}(A,\cc M_{fr}(L\wedge A)_f),
   \end{equation}
\normalsize
where the meaning of the lower index $f$ for spaces is given in the beginning of this paper, after Notations~\ref{n:general}, and for $S^1$-spectra it means taking a levelwise Nisnevich-local fibrant replacement in the category of $S^1$-spectra.

\begin{lemma}\label{chempion} (=~\cite[Lemma 9.1]{GP})
Let $u:A\to B$ be a motivic weak equivalence in
$sShv_\bullet(Sm/k)$ between finitely presented objects, such that the induced morphism $u_*:\cc M_{fr}(L\wedge A)\to\cc M_{fr}(L\wedge B)$
is Nisnevich-locally a stable weak equivalense of spectra. Suppose that the fibrant replacements in the levelwise Nisnevich-local injective model structure
$\cc M_{fr}(L)_f,\cc M_{fr}(L\wedge A)_f,\cc M_{fr}(L\wedge B)_f$
are all motivically fibrant $S^1$-spectra. Then $\alpha_A:\cc
M_{fr}(L)_f\to\underline{\Hom}(A,\cc M_{fr}(L\wedge A)_f)$ is a schemewise stable equivalence if and only if 
$\alpha_B:\cc
M_{fr}(L)_f\to\underline{\Hom}(B,\cc M_{fr}(L\wedge B)_f)$
is such.
\end{lemma}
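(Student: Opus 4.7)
\begin{plan}
The statement has a natural two-out-of-three flavour, so the plan is to fit $\alpha_A$ and $\alpha_B$ into a commutative square whose other two edges can be shown unconditionally to be schemewise stable equivalences under the hypotheses.

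First, using naturality of the morphisms $\alpha_{(-)}$ from~\eqref{smashell_2} in the first argument (via $u:A\to B$) and in the second argument (via $u_*:\cc M_{fr}(L\wedge A)\to\cc M_{fr}(L\wedge B)$), I would assemble the commutative square
\[
\begin{tikzcd}
\cc M_{fr}(L)_f \ar[r,"\alpha_A"] \ar[d,"\alpha_B"'] & \underline{\Hom}(A,\cc M_{fr}(L\wedge A)_f) \ar[d,"(u_*)_*"] \\
\underline{\Hom}(B,\cc M_{fr}(L\wedge B)_f) \ar[r,"u^*"'] & \underline{\Hom}(A,\cc M_{fr}(L\wedge B)_f)
\end{tikzcd}
\]
in the category of $S^1$-spectra of simplicial Nisnevich sheaves. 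By the two-out-of-three property for schemewise stable equivalences, it suffices to prove that the right-hand vertical map and the bottom horizontal map are always schemewise stable equivalences under the hypotheses of the lemma.

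For the right vertical map, the hypothesis gives that $u_*:\cc M_{fr}(L\wedge A)\to\cc M_{fr}(L\wedge B)$ is Nisnevich-locally a stable equivalence of $S^1$-spectra. After taking levelwise Nisnevich-local injective fibrant replacements, the induced map $\cc M_{fr}(L\wedge A)_f\to\cc M_{fr}(L\wedge B)_f$ is a levelwise local, hence (since both sides are levelwise injectively fibrant) a levelwise schemewise weak equivalence of spectra of simplicial sheaves. Because $A$ is finitely presented and the target is already assumed to be a motivically fibrant $\Omega$-spectrum, the functor $\underline{\Hom}(A,-)$ preserves schemewise weak equivalences between such fibrant objects on each spectrum level, yielding that $(u_*)_*$ is a schemewise stable equivalence.

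For the bottom horizontal map, one uses that $u:A\to B$ is a motivic weak equivalence between finitely presented motivic spaces, and that $\cc M_{fr}(L\wedge B)_f$ is by hypothesis a motivically fibrant $\Omega$-spectrum with levelwise motivically fibrant terms. The internal mapping spectrum $\underline{\Hom}(-,\cc M_{fr}(L\wedge B)_f)$ therefore sends the motivic equivalence $u$ of finitely presented objects to a schemewise stable equivalence of $\Omega$-spectra; this is the standard Quillen adjunction feature of the motivic model structure applied levelwise, combined with the fact that levelwise motivic fibrancy together with the $\Omega$-spectrum condition ensures that both mapping spectra are again levelwise motivically fibrant.

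The main obstacle I expect is checking this last point cleanly: one must verify that $\underline{\Hom}(A,-)$ and $\underline{\Hom}(B,-)$ produce spectra whose schemewise stable equivalences can be detected levelwise and genuinely preserve motivic equivalences between finitely presented objects when applied to motivically fibrant $\Omega$-spectra. Once this compatibility between the motivic model structure on $sShv_\bullet(Sm/k)$ and the levelwise Nisnevich-local model structure on $Sp_{S^1}(k)$ is in hand, the conclusion follows immediately from the square above.
\end{plan}
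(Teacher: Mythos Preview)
The paper does not prove this lemma; it is listed in ``Group 1'' (previously proved statements) and cited verbatim as \cite[Lemma~9.1]{GP}. So there is no in-paper proof to compare against. That said, your two-out-of-three square is the natural argument and is almost certainly what \cite{GP} does.

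One genuine imprecision in your plan: for the right vertical arrow you write that after levelwise local fibrant replacement the map $\cc M_{fr}(L\wedge A)_f\to\cc M_{fr}(L\wedge B)_f$ is a \emph{levelwise} local equivalence, hence levelwise schemewise. But the hypothesis only gives you a \emph{stable} local equivalence, and stable local equivalences are not levelwise local equivalences in general. The correct route is the one you hint at a sentence later: both $\cc M_{fr}(L\wedge A)_f$ and $\cc M_{fr}(L\wedge B)_f$ are assumed motivically fibrant, hence in particular they are fibrant $\Omega$-spectra in the stable local model structure; a stable local equivalence between such objects is then a schemewise stable equivalence (cf.\ Note~\ref{n:schemewise}). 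After that, $\underline{\Hom}(A,-)$ preserves schemewise stable equivalences between levelwise fibrant objects because $A$ is finitely presented (so sections over $U$ of $\underline{\Hom}(A,-)$ are computed as a finite homotopy limit of sections). Your bottom-arrow argument is fine: in the injective motivic model structure everything is cofibrant, $\underline{\Hom}(-,\cc M_{fr}(L\wedge B)_f)$ is right Quillen, and a motivic equivalence between motivically fibrant $S^1$-spectra is a schemewise stable equivalence.
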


\begin{corollary}\label{cor:A1_fibrant_2} (=~\cite[Corollary 7.5]{GP})
Let $k$ be an infinite perfect field, and let $Y$ be a simplicial object in $SmOp(Fr_0(k))$. Suppose the simplicial pointed Nisnevich sheaf $C_*Fr(Y)$ is Nisnevich-locally connected. Let
$M_{fr}(Y) \to M_{fr}(Y)_f$ be the fibrant replacement in the levelwise Nisnevich-local injective model structure on $S^1$-spectra of pointed simplicial sheaves.
Then:
\begin{enumerate}
\item $M_{fr}(Y)_f$ is fibrant in the stable injective motivic model category of $S^1$-spectra;

\item for any $n\geq 0$ and any fibrant replacement $C_*(Fr(-,Y\otimes
{S}^n))\to C_*(Fr(-,Y\otimes {S}^n))_{f}$ в $sShv_\bullet(Sm/k)$, the space
$C_*(Fr(-,Y\otimes {S}^n))_{f}$ is motivically fibrant in $sShv_\bullet(Sm/k)_{\spc}$.
\end{enumerate}
\end{corollary}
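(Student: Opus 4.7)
The plan is to combine the Segal $\Gamma$-space recognition principle with the strict homotopy invariance theorem for radditive quasi-stable framed presheaves, following the template of \cite[Corollary 7.5]{GP}. The two items will be proved together, with (2) as the analytic heart and (1) as a formal consequence.

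First I would record that $M_{fr}(Y)$ is by construction the Segal $S^1$-spectrum associated with the $\Gamma$-space $K \mapsto C_*Fr(-, Y\otimes K)=Fr(\Delta^\bullet\times -, Y\otimes K)$. A classical recognition principle for Segal $\Gamma$-spaces says that whenever the underlying zeroth space is group-like, the associated Segal spectrum is levelwise an $\Omega$-spectrum. Under our hypothesis that $C_*Fr(Y)$ is Nisnevich-locally connected, group-likeness holds Nisnevich-locally (a connected $H$-space is group-like), so after the levelwise Nisnevich-local injective fibrant replacement the structure maps
\[C_*Fr(-,Y\otimes S^n)_f \longrightarrow \Omega C_*Fr(-,Y\otimes S^{n+1})_f\]
are schemewise weak equivalences for every $n\geq 0$. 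In particular $M_{fr}(Y)_f$ is a stable equivalent Nisnevich-locally fibrant $\Omega$-spectrum.

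The substantive step is to upgrade \emph{Nisnevich-local} fibrancy of each $C_*Fr(-,Y\otimes S^n)_f$ to \emph{motivic} fibrancy, which is item (2). For this I would invoke Voevodsky's strict $\mathbb A^1$-invariance theorem in the framed form developed by Garkusha-Panin. Namely, the homotopy presheaves $\pi_i C_*Fr(-,Y\otimes S^n)$ carry a natural structure of radditive quasi-stable framed presheaves of abelian groups, and they are $\mathbb A^1$-invariant as presheaves because $C_*$ is built from the cosimplicial affine scheme $\Delta^\bullet$. The strict homotopy invariance theorem then guarantees that their Nisnevich sheafifications are strictly $\mathbb A^1$-invariant. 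A standard descent/hypercohomology spectral sequence argument then identifies the Nisnevich-local injective fibrant replacement with the motivic fibrant replacement, yielding motivic fibrancy of $C_*Fr(-,Y\otimes S^n)_f$ levelwise. This is precisely (2), and together with the $\Omega$-spectrum property established in the first paragraph it gives (1): a levelwise motivically fibrant $\Omega$-spectrum is stably fibrant in the motivic injective model structure on $S^1$-spectra.

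The main obstacle is the strict homotopy invariance input used in the second paragraph, but this is imported as a black box from the Garkusha-Panin framework (ultimately relying on Voevodsky's strict $\mathbb A^1$-invariance theorem and its extensions by Druzhinin-Panin). Once that is in hand the corollary is formal; the local connectedness hypothesis is the precise condition that turns the Segal machine into an $\Omega$-spectrum and allows one to pass from levelwise $\mathbb A^1$-invariance of the individual spaces to stable motivic fibrancy of the whole $S^1$-spectrum.
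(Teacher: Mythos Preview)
Your proposal is correct and mirrors the argument of \cite[Corollary 7.5]{GP}, which is exactly what the paper invokes: this corollary sits in ``Group 1'' of statements that are quoted verbatim from \cite{GP} without a new proof, so there is no independent argument in the present paper to compare against. Your two-step outline (Segal recognition to get a Nisnevich-local $\Omega$-spectrum from the local connectedness hypothesis, then strict $\mathbb A^1$-invariance of the framed homotopy presheaves via \cite{GPHoInv} to upgrade Nisnevich-local fibrancy to motivic fibrancy) is precisely the mechanism behind \cite[Theorems 6.5 and 7.4, Corollary 7.5]{GP}.
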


\subsection{Group 2}

Let $\pair{B} \in SmOp(Fr_0(k)).$
Consider the following commutative diagram in $\Delta^{\op}Fr_0(k)$ (= \cite[p. 33 Formula (12)]{GP}

   \begin{equation}\label{ska-vs-dinamo}
    \xymatrix{\bb G_m\ar[r]\ar[d]&\bb A^1\ar[r]\ar[d]&\bb A^1//\bb G_m\ar[d]^{\alpha}\\
               \bb G_m^{\wedge 1}\ar[r]\ar@{=}[d]&\bb A^{\wedge 1}\ar[r]\ar[d]
               &\bb A^{\wedge 1}//\bb G_m^{\wedge 1}\ar[d]^\beta\\
               \bb G_m^{\wedge 1}\ar[r]&\emptyset\ar[r]&\bb G_m^{\wedge 1}\otimes {S}^1}
   \end{equation}
It induces a morphism of framed motives

   \[\beta_*\alpha_*:M_{fr}(\pair{B}\wedge \pair{T}^{\wedge n}\wedge (\A^1//\bb G_m))\to M_{fr}(\pair{B}\wedge \pair{T}^{n}\wedge\bb G_m^{\wedge 1}\otimes {S}^1),\quad n\geq 0,\]

\begin{theorem} \label{AlphaBeta} (cf.~\cite[Theorem 8.2]{GP})
The morphism $\beta_*\alpha_*$ is Nisnevich-locally a weak equivalence of $S^1$-spectra, if $char(k) \neq 2.$
\end{theorem}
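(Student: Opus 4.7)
\begin{plan}
The approach is to mimic the proof of~\cite[Theorem~8.2]{GP} with the extra smash factor $\pair{B}$ carried through every step. The crucial enabling fact is Note~\ref{two_fr_motives_are_eq}, which lets us identify $M_{fr}(\pair{B}\wedge \pair{T}^{\wedge n}\wedge -)$ with $\cc{M}_{fr}(B\wedge T^{\wedge n}\wedge spc(-))$; the entire problem thereby reduces to checking Nisnevich-local stable equivalences of framed motives of pointed motivic spaces, precisely the situation handled in~\cite[Section~8]{GP}.

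First I would split $\beta_*\alpha_*$ into its two factors and treat $\alpha_*$ and $\beta_*$ separately. For $\beta_*$, the bottom half of diagram~\eqref{ska-vs-dinamo} exhibits $\bb G_m^{\wedge 1}\otimes S^1$ as the simplicial mapping cone of $\bb A^{\wedge 1}\to\mathrm{pt}$ restricted over $\bb G_m^{\wedge 1}$; combining this with the $\bb A^1$-contractibility of $\bb A^{\wedge 1}$ and the cofibration structure built into Definition~\ref{definition_//_f} identifies the cofiber with $\bb A^{\wedge 1}//\bb G_m^{\wedge 1}$ up to $\bb A^1$-local equivalence. For $\alpha_*$, the top half of the same diagram compares $\bb A^1//\bb G_m$ with $\bb A^{\wedge 1}//\bb G_m^{\wedge 1}$; the difference between them is the collapse of the marked rational point $1\in\bb G_m$, and the additivity of framed correspondences with respect to the disjoint decomposition $\bb G_m=1\sqcup(\bb G_m\setminus 1)$ yields a homotopy inverse up to an $\bb A^1$-deformation.

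Once these $\bb A^1$-local equivalences of the underlying simplicial objects are in hand, the main task is to upgrade them to Nisnevich-local stable equivalences of the framed motives $M_{fr}(\pair{B}\wedge \pair{T}^{\wedge n}\wedge -)$. For this I would invoke Lemma~\ref{chempion} (=\cite[Lemma~9.1]{GP}) with $L=\pair{B}\wedge\pair{T}^{\wedge n}$ and with $A,B$ ranging over the three relevant simplicial sheaves in diagram~\eqref{ska-vs-dinamo}; the hypothesis of motivic fibrancy of the fibrant replacements $\cc{M}_{fr}(L\wedge-)_f$ is supplied by Corollary~\ref{cor:A1_fibrant_2}, once one knows that the underlying spaces $C_*Fr$ are Nisnevich-locally connected. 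For the pairs at hand, this local connectedness is exactly the conclusion of Proposition~\ref{p:connectedness}, applied with $r=0$ (and, where necessary, $r=1$) to the smash products with $\pair{T}^{\wedge n}$.

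The main obstacle is precisely this transfer: passing $\bb A^1$-local equivalences of simplicial pairs through the framed motive functor without first knowing the motivic fibrancy statement we are ultimately aiming at. The characteristic restriction $\mathrm{char}(k)\neq 2$ enters at the same place as in~\cite[Section~8]{GP}, namely in the step involving the degree-$2$ self-map of $\bb G_m$ used to unwind the discrepancy between $\bb A^1//\bb G_m$ and $\bb A^{\wedge 1}//\bb G_m^{\wedge 1}$; inverting $2$ makes that endomorphism an isomorphism in the Nisnevich-local stable framed category, closing out the proof.
\end{plan}
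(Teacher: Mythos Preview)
Your plan has the dependency backwards at its central step. Lemma~\ref{chempion} does \emph{not} take an $\bb A^1$-local equivalence $u:A\to B$ and output that $u_*:\cc M_{fr}(L\wedge A)\to\cc M_{fr}(L\wedge B)$ is a Nisnevich-local stable equivalence; rather, that equivalence is a \emph{hypothesis} of the lemma, and the conclusion is about the adjunction maps $\alpha_A,\alpha_B$. In the paper's logical architecture (see the proof of Theorem~\ref{Segal_Thm_II} in Subsection~2.5), Theorem~\ref{AlphaBeta} is precisely one of the inputs that allows Lemma~\ref{chempion} to be applied. So invoking Lemma~\ref{chempion} here is circular: you would be assuming what you are trying to prove.

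The paper itself places Theorem~\ref{AlphaBeta} in ``Group~2'': its proof is the proof of~\cite[Theorem~8.2]{GP} carried over verbatim, with the smash factor $\pair{B}\wedge\pair{T}^{\wedge n}$ passively riding along (the maps $\alpha,\beta$ of~\eqref{ska-vs-dinamo} act only on the right-hand factor, so nothing about $\pair{B}$ is used beyond the functoriality of $M_{fr}$ in its argument). That argument in~\cite{GP} is not an abstract transfer principle but a hands-on one: it builds explicit $\bb A^1$-homotopies at the level of framed correspondences and applies the additivity theorem for framed motives~\cite[Theorem~6.4]{GP} (this is where $\operatorname{char} k\neq 2$ enters, via the relation $\sigma+\sigma'\sim 2\cdot\id$, not via a degree-$2$ self-map of $\bb G_m$). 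Your intuitions about why $\alpha$ and $\beta$ ought to be equivalences are reasonable, but the mechanism that actually upgrades them to stable equivalences of $M_{fr}$ is this direct computation, not Lemma~\ref{chempion}.
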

\subsection{Group 3}

\begin{definition} \label{definition_connected}
We call a pair $\pair{B}=(X,U) \in SmOp(Fr_0(k))$ {\it connected}, if $U$ itersects each connected (=irreducible) component of $X$ in a nonemty subscheme.
\end{definition}

\begin{note}
In case of $k=\bb{C},$ the pair $(X,U)$ is connected if and only if the pair of topological varieties $(X^{an},U^{an})$ is connected in the general topological sense.
\end{note}

\begin{example}
For any pair $\pair{B}$ and integer $n \geq 1,$ the pair $\pair{B} \wedge \pair{T}^{\wedge n}$ is connected
\end{example}

\begin{proposition}\label{prop:connectedness}
For any integer $n \geq 0$ and any connected pair $\pair{B}=(X,U)\in SmOp(Fr_0(k))$ the spaces
\[C_*Fr(\pair{B}\wedge \pair{T}^{\wedge n}), C_*Fr(\pair{B}\wedge \pair{T}^{\wedge n}\wedge (\A^1//\Gm)), C_*(Fr(\pair{B}\wedge \pair{T}^{\wedge n}\wedge \bb G_m^{\wedge 1}\otimes {S}^1))\]
are Nisnevich-locally connected.
\end{proposition}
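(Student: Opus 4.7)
The plan is to establish local connectedness stalk-wise. For every essentially smooth Henselian local $k$-scheme $V$ one must check that each of the three simplicial sets $C_*Fr(V, -)$ is $0$-connected. Since $\pi_0$ is the quotient of $Fr(V, -)$ by the relation generated by $1$-simplices in $Fr(\A^1_V, -)$, this amounts to exhibiting, for every framed correspondence $\Phi \in Fr(V, -)$, an $\A^1$-homotopy connecting $\Phi$ to the zero correspondence.

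The third space is essentially free: $\bb G_m^{\wedge 1}\otimes S^1 \in \Delta^{\op}Fr_0(k)$ is by construction a simplicial $S^1$-suspension of $\bb G_m^{\wedge 1}$, and the functor $Fr(V, \pair{B}\wedge\pair{T}^{\wedge n}\wedge -)$ respects the $\otimes S^1$-construction. The resulting simplicial set is an $S^1$-suspension and therefore $0$-connected.

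The first space is the main case. For $n \geq 1$ the pair $\pair{B}\wedge\pair{T}^{\wedge n}$ is automatically connected through the $\Gm$-part of $\pair{T}$, and I would use an explicit multiplicative shift: writing $\Phi = (Z, W, \phi_1, \ldots, \phi_m; g_X, g_1, \ldots, g_n)$, introduce a parameter $s \in \A^1$ and replace $g_n$ by $(1-s)g_n + s$. At $s = 0$ one recovers $\Phi$; at $s = 1$ the modified last coordinate is the unit $1$, whose zero locus is empty, so the support vanishes. Finiteness of the total support $\wt{Z}$ over $V \times \A^1_s$ follows because on the open locus $\{g_n \neq 1\}$ the equation $(1-s)g_n + s = 0$ expresses $s$ uniquely as the regular function $g_n/(g_n - 1)$, so $\wt{Z}$ is a closed subscheme of a scheme finite over $V \times \A^1_s$. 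For $n = 0$ the argument must exploit the connectedness hypothesis on $\pair{B}$: given $\Phi = (Z, W, \phi; g)$, I would construct an $\A^1$-family $g_s: W \to X$ with $g_0 = g$ such that $g_1|_Z$ factors through $U$; then $g_1^{-1}(S) \cap Z = \emptyset$ and the $s = 1$ correspondence has empty support. The construction of $g_s$ relies on the smoothness of $X$, the decomposition of $Z$ (finite over Henselian local $V$) into disjoint Henselian local pieces, and the hypothesis that $U$ meets every connected component of $X$, so that on each $Z_i$ one can find an $\A^1$-path in $X$ from $g|_{Z_i}$ to a map landing in $U$.

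The second space is handled via diagram~\ref{ska-vs-dinamo}: the morphism $\beta \circ \alpha: \A^1//\Gm \to \bb G_m^{\wedge 1}\otimes S^1$ induces a natural comparison from the second space to the third. Combined with the $\A^1$-contractibility of the $\A^1$-part of $\A^1 // \Gm$ and the already-established $0$-connectedness of the third space, this gives $0$-connectedness of the second. The main obstacle is the $n = 0$ case of the first space: it demands a nontrivial geometric deformation of $g$, essentially using the smoothness of $X$ together with the connectedness hypothesis, while maintaining finiteness of the support throughout the homotopy parameter.
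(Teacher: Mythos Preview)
Your reduction for the third space is essentially right (the $0$th simplicial level of $\bb G_m^{\wedge 1}\otimes S^1$ is empty, so $\pi_0$ is trivial), and you correctly identify the $n=0$ case of the first space as the heart of the matter. But the paper's organisation is quite different and avoids the ad hoc reductions you attempt. The paper proves a single observation (Corollary~\ref{c:connectedness_simplicial}): for \emph{any} simplicial pair $P_\bullet$, local connectedness of $C_*Fr(-,P_\bullet)$ follows from that of $C_*Fr(-,P_0)$. One then checks that the $0$th level of each of the three simplicial pairs in question is a connected (non-simplicial) pair, and a single geometric lemma (Lemma~\ref{l:connectedness}) handles all of them at once. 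So there is no separate argument for $n\geq 1$, no comparison via $\beta\alpha$ for the second space, and no suspension argument for the third.

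There is a genuine gap in your $n\geq 1$ argument. When you deform $g_n$ to $(1-s)g_n+s$, the new support at parameter $s$ is $\{\phi=0\}\cap g_X^{-1}(S)\cap\{g_1=\dots=g_{n-1}=0\}\cap\{(1-s)g_n+s=0\}$. Your claim that the last equation expresses $s$ as $g_n/(g_n-1)$ is correct, but this exhibits the total support as a graph over $\{\phi=0\}\cap g_X^{-1}(S)\cap\{g_1=\dots=g_{n-1}=0\}$, and \emph{that} locus has no reason to be finite (or even quasifinite) over $V$: only the full intersection with $\{g_n=0\}$ is finite. So finiteness of $\wt Z$ over $V\times\A^1_s$ fails as stated.

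Your $n=0$ sketch has a more serious issue: the phrase ``find an $\A^1$-path in $X$ from $g|_{Z_i}$ to a map landing in $U$'' presupposes something false. A general smooth $k$-variety is not $\A^1$-connected; two points need not be joined by any map from $\A^1$. The paper's proof of Lemma~\ref{l:connectedness} does not move inside $X$ at all. It first applies the moving lemma (Lemma~\ref{l:homotopy_chain_to_qf}) to make $\phi^{-1}(0)$ quasifinite over $V$, then chooses \'etale local coordinates $e:\overset{\circ}X\to\A^d$ near each image point, moves linearly inside $\A^d$ in a direction transverse to the image of $S$ (using a Noether-normalisation step), and lifts back to $X$ via $e$. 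The support changes throughout the homotopy and must be tracked carefully; it is not simply $Z\times 0$ at one end and empty at the other. Your outline omits both the moving-lemma preliminary (without which $\{\phi=0\}$ is not finite and no control on the deformed support is possible) and the \'etale-coordinate mechanism that replaces the nonexistent $\A^1$-path.
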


Hence we have the following.

\begin{proposition}\label{cor:A1_fibrant_3}
(cf.~\cite[Corollary 7.6]{GP})

Let
$E\mapsto E_{f}$ be a fibrant replacement in the levelwise injective local model structure on the sheaves of $S^1$-spectra.
For each integer $n \geq 0$ and each connected pair $\pair{B}=(X,U)\in SmOp(Fr_0(k)),$ the $S^1$-spectra
\[M_{fr}(\pair{B}\wedge \pair{T}^{\wedge n})_f,
 M_{fr}(\pair{B}\wedge \pair{T}^{\wedge n}\wedge \bb G_m^{\wedge 1}\otimes { S}^1)_f
\text{ and }
M_{fr}(\pair{B}\wedge \pair{T}^{\wedge n}\wedge (\A^1//\Gm))_f\]
are fibrant in the stable injective motivic model category of $S^1$-spectra. In particular, they are fibrant replacements in the stable injective Nisnevich-local model structure.

Furthermore, for connected pairs $\pair{B}$
the motivic spaces
\[C_*Fr(\pair{B} \wedge \pair{T}^{\wedge n})_f,
C_*(Fr(\pair{B}\wedge \pair{T}^{\wedge n}\wedge \bb G_m^{\wedge 1}\otimes {S}^1))_f
\text{ and }
C_*Fr(\pair{B}\wedge \pair{T}^{\wedge n}\wedge (\A^1//\Gm))_f\]
are motivically fibrant in the motivic model structure of Morel and Voevodsky on $sShv_\bullet(Sm/k)_{\spc}$.
\end{proposition}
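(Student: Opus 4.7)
\begin{plan}
The plan is to deduce this Proposition as an immediate consequence of Corollary~\ref{cor:A1_fibrant_2}, feeding it the local connectedness supplied by Proposition~\ref{prop:connectedness}.

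First, note that each of the three objects
\[\pair{B}\wedge\pair{T}^{\wedge n},\qquad \pair{B}\wedge\pair{T}^{\wedge n}\wedge\bb G_m^{\wedge 1}\otimes S^1,\qquad \pair{B}\wedge\pair{T}^{\wedge n}\wedge(\A^1//\Gm)\]
is naturally a simplicial object in $SmOp(Fr_0(k))$: the first is the constant simplicial pair $\pair{B}\wedge\pair{T}^{\wedge n}$, while the other two are obtained by smashing this constant pair (with respect to the monoidal structure of Definition~\ref{definition_pair}) with the simplicial objects $\bb G_m^{\wedge 1}\otimes S^1$ and $\A^1//\Gm$ built in Definitions~\ref{definition_//_f} and~\ref{wedge}. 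Hence Corollary~\ref{cor:A1_fibrant_2} applies to each of them, \emph{provided} that the Nisnevich-local connectedness of the associated $C_*Fr$-space is verified.

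That hypothesis is, in each of the three cases, exactly the statement of Proposition~\ref{prop:connectedness}, since $\pair{B}$ is assumed to be connected. Applying part~(1) of Corollary~\ref{cor:A1_fibrant_2} to each of the three simplicial pairs then yields the first assertion of the Proposition, the stable injective motivic fibrancy of the $S^1$-spectra $M_{fr}(-)_f$. Applying part~(2) of the same Corollary with simplicial degree $0$ yields the second assertion, the motivic fibrancy of the three associated spaces $C_*Fr(-)_f$ in the Morel--Voevodsky structure on $sShv_\bullet(Sm/k)_{\spc}$.

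No serious obstacle will arise at this step itself: the entire difficulty is packaged into Proposition~\ref{prop:connectedness}, whose proof constitutes the substantive content of Group~3 of the reduction and will be carried out elsewhere in the paper.
\end{plan}
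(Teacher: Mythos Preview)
Your proposal is correct and matches the paper's approach exactly: the paper introduces Proposition~\ref{cor:A1_fibrant_3} with the phrase ``Hence we have the following'' immediately after Proposition~\ref{prop:connectedness}, indicating that it is obtained by feeding the local connectedness from Proposition~\ref{prop:connectedness} into Corollary~\ref{cor:A1_fibrant_2}. One small terminological slip: in your last step you should say ``with $n=0$'' (the index of the sphere $S^n$ in part~(2) of Corollary~\ref{cor:A1_fibrant_2}) rather than ``simplicial degree $0$''.
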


\subsection{Group 4}

\begin{theorem} \label{Quotient-Cone} (cf.~\cite[Theorem 1.2]{GNP})
Let $char(k) \neq 2.$
Let $S \subset X$ be a closed subscheme of a $k$-smooth scheme.
Put $U=X-S$ and denote the pair $(X,U)$ by $\pair{B} \in SmOp(Fr_0(k))$,
and the pair $(\A^1,\Gm)$ by $\pair{T}$. Consider morphisms $\alpha_\pair{B}: (X//U) \to \pair{B}$ and $\alpha_{\pair{T}}: (\A^1//\Gm) \to \pair{T}$
from Note~\ref{alpha} in the category $\Delta^{op}Fr_0(k)$.
For each $n \geq 0$ consider the morphism
\[\alpha_{\pair{B}} \wedge \alpha^{\wedge n}_{\pair{T}}: (X//U) \wedge (\A^1//\Gm)^{\wedge n} \to \pair{B} \wedge \pair{T}^{\wedge n}.\]

Then the induced morphism
\[ M_{fr}(\alpha_{\pair{B}} \wedge \alpha^{\wedge n}_{\pair{T}}): M_{fr}((X//U) \wedge {(\A^1//\Gm)^{\wedge n}} ) \to M_{fr}(\pair{B} \wedge \pair{T}^{\wedge n})\]
is Nisnevich-locally a stable weak equivalence of $S^1$-spectra.
\end{theorem}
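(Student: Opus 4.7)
\begin{plan}
I would factor the morphism as
\[\alpha_\pair{B}\wedge \alpha_\pair{T}^{\wedge n} \;=\; (\id_\pair{B}\wedge \alpha_\pair{T}^{\wedge n})\circ (\alpha_\pair{B}\wedge \id_{(\A^1//\Gm)^{\wedge n}})\]
and show that $M_{fr}$ of each factor is a Nisnevich-local stable weak equivalence.

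For the first factor, the key observation is that $X//U$ is constructed in Definition~\ref{definition_//_f} as a simplicial pushout of $X\leftarrow U \hookrightarrow U\otimes I$, and the Segal $\Gamma$-space construction underlying $M_{fr}$ (Definition~\ref{frmotive}) turns this pushout into a cofiber sequence of $S^1$-spectra
\[M_{fr}(U\wedge A)\to M_{fr}(X\wedge A)\to M_{fr}((X//U)\wedge A)\]
for $A = (\A^1//\Gm)^{\wedge n}$. On the other hand, by Note~\ref{two_fr_motives_are_eq}, $M_{fr}(\pair{B}\wedge A)$ equals $\cc M_{fr}$ applied to $spc(\pair{B}\wedge A)$, and since $spc(\pair{B}) = X/U$ is the Nisnevich cofiber of $U_+\to X_+$, one obtains an analogous cofiber sequence ending in $M_{fr}(\pair{B}\wedge A)$. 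The map between the two sequences induced by $\alpha_\pair{B}\wedge \id$ is the identity on the first two terms, so it induces a local stable equivalence on the third.

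For the second factor, the base $\pair{B}$ is an ordinary pair in $SmOp(Fr_0(k))$, so Theorem~\ref{AlphaBeta} applies. I would proceed by a secondary induction on $n$, replacing one $(\A^1//\Gm)$ factor by $\pair{T}$ at a time: Theorem~\ref{AlphaBeta} supplies a local equivalence
\[M_{fr}(\pair{B}\wedge \pair{T}^{\wedge (n-1)}\wedge (\A^1//\Gm))\to M_{fr}(\pair{B}\wedge \pair{T}^{\wedge (n-1)}\wedge \Gm^{\wedge 1}\otimes S^1),\]
which combines with the framed-motive identification of $\Gm^{\wedge 1}\otimes S^1$ and $\pair{T}$ (via Note~\ref{two_fr_motives_are_eq} and the motivic decomposition $T\simeq \Gm\wedge S^1$) to yield the required replacement.

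The principal obstacle I foresee is making the cofiber-sequence argument of the first step rigorous in $Sp_{S^1}(k)$ when $A$ is a simplicial object rather than an honest pair: one must use Proposition~\ref{prop:connectedness} and Proposition~\ref{cor:A1_fibrant_3} to ensure that all the relevant levelwise-fibrantly-replaced framed motive spectra are already stably motivically fibrant $\Omega$-spectra, so that Nisnevich-local levelwise equivalences upgrade to stable equivalences of spectra. A secondary difficulty is confirming that the pushout in Definition~\ref{definition_//_f}, once smashed with the simplicial object $A$, still yields a genuine cofiber sequence after applying $M_{fr}$; this requires a levelwise diagram chase reducing to the case where $A$ is a pair, together with careful use of the Voevodsky Lemma (Note~\ref{note_lemma_Voev}) to pass between framed correspondences on pairs and on pointed sheaves.
\end{plan}
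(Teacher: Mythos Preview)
Your factorisation is fine, but the heart of the argument for the first factor --- the cofiber sequence
\[M_{fr}(U\wedge A)\to M_{fr}(X\wedge A)\to M_{fr}(\pair{B}\wedge A)\]
--- is not a formality. That $spc(\pair{B})=X/U$ is a cofiber of \emph{Nisnevich sheaves} does not imply that $\cc M_{fr}$ carries it to a cofiber of spectra: $\cc Fr(-,\mathcal G)$ is not exact in $\mathcal G$, so Note~\ref{two_fr_motives_are_eq} by itself gives you nothing here. Establishing this cofiber sequence is precisely the content of the linear cone theorem (Theorem~\ref{genCone}), which the paper proves via the filtered subpresheaves $Fr^{qf,k}$ and a moving lemma; the passage to $M_{fr}$ then goes through the $\ZF$-linearisation, Theorem~\ref{ZM_fr_and_LM_fr}, and the stable Whitehead theorem. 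Your plan elides all of this machinery, and the ``obstacles'' you list at the end do not include it.

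Your treatment of the second factor is circular. You want to replace one $(\A^1//\Gm)$ by $\pair{T}$ using Theorem~\ref{AlphaBeta} together with ``the framed-motive identification of $\Gm^{\wedge 1}\otimes S^1$ and $\pair{T}$''. But that identification --- that $M_{fr}(\pair{B}\wedge\pair{T}^{n-1}\wedge \Gm^{\wedge 1}\otimes S^1)\simeq M_{fr}(\pair{B}\wedge\pair{T}^{n})$ locally --- is not a consequence of Note~\ref{two_fr_motives_are_eq}: the motivic equivalence $T\simeq \Gm^{\wedge 1}\wedge S^1$ is only a zigzag through $\A^1//\Gm$, and inverting the leg $\alpha_{\pair T}:(\A^1//\Gm)\to\pair T$ after applying $M_{fr}(\pair B\wedge\pair T^{n-1}\wedge-)$ is exactly Corollary~\ref{T-cone}, which is \emph{derived from} Theorem~\ref{Quotient-Cone}. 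In the paper's induction step this replacement is again obtained from Theorem~\ref{genCone}, now applied with $\pair M=\pair B\wedge\pair T^{n}$ and the pair $(\A^1,\Gm)$, rather than from any formal motivic-invariance property of $\cc M_{fr}$.
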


\begin{corollary} \label{T-cone} (cf.~\cite[Corollary 8.1]{GNP})
In the conditions and notations of Theorem \ref{Quotient-Cone}
for each $n\geq 0$ and each $\pair{B} \in SmOp(Fr_0(k))$ the morphism
\[M_{fr}(id_\pair{B}\wedge id^{\wedge n}_{\pair{T}}\wedge \alpha_{\pair{T}}):  
M_{fr}(\pair{B}\wedge \pair{T}^{\wedge n}\wedge(\A^1//\Gm))\to M_{fr}(\pair{B}\wedge \pair{T}^{\wedge n+1})\]
is Nisnevich-locally a stable weak equivalence of $S^1$-spectra.
\end{corollary}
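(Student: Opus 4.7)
\begin{plan}
The plan is to derive Corollary~\ref{T-cone} from Theorem~\ref{Quotient-Cone} applied at two consecutive indices, combined with a 2-out-of-3 argument.

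First, I observe that for any $n\geq 0$ the pair $\pair{B}\wedge\pair{T}^{\wedge n}$ itself lies in $SmOp(Fr_0(k))$: up to the natural identification it equals $(X\times\A^n,\, X\times\A^n-S\times\{0\}^n)$, where $S\times\{0\}^n$ is a closed subscheme of the smooth scheme $X\times\A^n$. So Theorem~\ref{Quotient-Cone} applies to the pair $\pair{B}$ at any index.

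Applying Theorem~\ref{Quotient-Cone} with the index $n+1$ gives that $M_{fr}(\alpha_\pair{B}\wedge\alpha^{\wedge n+1}_\pair{T})$ is Nisnevich-locally a stable weak equivalence. In the category $\Delta^{op}SmOp(Fr_0(k))$ one has the factorisation
\[\alpha_\pair{B}\wedge\alpha^{\wedge n+1}_\pair{T}=(\id_\pair{B}\wedge \id^{\wedge n}_\pair{T}\wedge\alpha_\pair{T})\circ(\alpha_\pair{B}\wedge\alpha^{\wedge n}_\pair{T}\wedge \id_{(\A^1//\Gm)}),\]
whose first factor is exactly the morphism from the statement of the corollary. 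By the 2-out-of-3 property it therefore suffices to show that $M_{fr}(\alpha_\pair{B}\wedge\alpha^{\wedge n}_\pair{T}\wedge \id_{(\A^1//\Gm)})$ is Nisnevich-locally a stable weak equivalence.

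For this last step I combine Theorem~\ref{Quotient-Cone} at the index $n$, which gives that $M_{fr}(\alpha_\pair{B}\wedge\alpha^{\wedge n}_\pair{T})$ is Nisnevich-locally a stable weak equivalence, with the compatibility of the framed-motive functor with smashing by a fixed simplicial $k$-smooth scheme. In the Segal $\Gamma$-space description of framed motives (Definition~\ref{frmotive}, Note~\ref{two_fr_motives_are_eq}), smashing a morphism in $\Delta^{op}SmOp(Fr_0(k))$ with the fixed simplicial pair $(\A^1//\Gm)$ on the last factor sends Nisnevich-local stable weak equivalences of framed motives to Nisnevich-local stable weak equivalences. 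The main obstacle is precisely this last smashing compatibility: it is the sort of structural statement one expects from a well-behaved homotopy theory of framed correspondences and is implicit in the $\Gamma$-space framework, but needs explicit verification because $(\A^1//\Gm)$ is itself a simplicial object rather than a constant pair, and the ambient category is $\Delta^{op}SmOp(Fr_0(k))$ rather than pointed simplicial sheaves.
\end{plan}
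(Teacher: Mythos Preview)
Your two-out-of-three strategy is valid, but it is more circuitous than the paper's route and the ``smashing compatibility'' step you flag is precisely where the extra work hides. The paper does not go through Theorem~\ref{Quotient-Cone} at two consecutive indices; instead it observes that the needed equivalence is already contained in the proof of Theorem~\ref{Quotient-Cone}. Namely, the induction step there establishes directly (as equation~\eqref{eq: M_fr_A1/Gm}) that
\[
M_{fr}(\pair{B}\wedge\pair{T}^{\wedge n}\wedge\A^1)\big/ M_{fr}(\pair{B}\wedge\pair{T}^{\wedge n}\wedge\Gm)\;\longrightarrow\; M_{fr}(\pair{B}\wedge\pair{T}^{\wedge n+1})
\]
is a Nisnevich-local stable weak equivalence, coming straight from the linear cone theorem (Theorem~\ref{genCone}) and Theorem~\ref{ZM_fr_and_LM_fr}. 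One then composes with the standard identification of $M_{fr}(Y\wedge(\A^1//\Gm))$ with the homotopy cofiber of $M_{fr}(Y\wedge\Gm)\to M_{fr}(Y\wedge\A^1)$ (the ``morphisms (1) and (3)'' in that proof, which are schemewise stable equivalences for any $Y$). Taking $Y=\pair{B}\wedge\pair{T}^{\wedge n}$ gives Corollary~\ref{T-cone} immediately.

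Your approach can be completed, but to do so you would end up proving exactly this cofiber identification anyway: once you know $M_{fr}(Y\wedge(\A^1//\Gm))\simeq\operatorname{cofib}\bigl(M_{fr}(Y\wedge\Gm)\to M_{fr}(Y\wedge\A^1)\bigr)$ naturally in $Y$, your missing map $M_{fr}(\alpha_{\pair{B}}\wedge\alpha_{\pair{T}}^{\wedge n}\wedge\id_{(\A^1//\Gm)})$ becomes the induced map on cofibers of $M_{fr}(\alpha_{\pair{B}\wedge\Gm}\wedge\alpha_{\pair{T}}^{\wedge n})$ and $M_{fr}(\alpha_{\pair{B}\wedge\A^1}\wedge\alpha_{\pair{T}}^{\wedge n})$, each of which is a local stable equivalence by Theorem~\ref{Quotient-Cone} applied to the pairs $\pair{B}\wedge\Gm$ and $\pair{B}\wedge\A^1$. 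So your route requires the cofiber identification \emph{plus} two more invocations of Theorem~\ref{Quotient-Cone} \emph{plus} the two-out-of-three step, whereas the paper's route uses only the cofiber identification and equation~\eqref{eq: M_fr_A1/Gm}.
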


\subsection{Reduction of the main Theorem}

\begin{proof}[Proof of Theorem~\ref{Segal_Thm_II} modulo several statements]
Note that the pair $\pair{B}$ of Theorem~\ref{Segal_Thm_II} is connected in the sense of Definition~\ref{definition_connected}.
By Proposition~\ref{cor:A1_fibrant_3}, for each integer $n \geq 0$, the
 $S^1$-spectrum $M_{fr}(\pair{B}\wedge \pair{T}^{\wedge n})_f$ is motivically fibrant, and the motivic space
 $C_*Fr(\pair{B}\wedge \pair{T}^{\wedge n})_f$ is motivically fibrant.
Let $u:(\bb{P}^1,\infty)\to T$ be a canonical motivic weak equivalence in $sShv_\bullet(Sm/k)$.
Consider the induced morphism of motivic spaces
\[u^*: \underline{\Hom}(T,C_*Fr(\pair{B}\wedge \pair{T}^{\wedge n+1})_f)\to \underline{\Hom}((\bb{P}^1,\infty),C_*Fr(\pair{B}\wedge \pair{T}^{\wedge n+1})_f).\]
Now use the morphisms $a_T$ and $\alpha_T$ defined before Lemma \ref{chempion}.
Specifically, by~\cite[Lemma 9.3]{GP}, the morphism
  \[u^*\circ a_T:  C_*Fr(\pair{B}\wedge \pair{T}^{\wedge n})_f\to\underline{\Hom}((\bb{P}^1,\infty),C_*Fr(\pair{B}\wedge \pair{T}^{\wedge n+1})_f)\]
is a schemewise weak equivalence if and only if the morphism
$\alpha_T:M_{fr}(\pair{B}\wedge \pair{T}^{\wedge n})_f\to\underline{\Hom}(T,M_{fr}(\pair{B}\wedge \pair{T}^{\wedge n+1})_f)$
is a schemewise stable equivalence of spectra.

Consider the following roof in the category $\Delta^{op}SmOp(Fr_0(k)):$
   \[\pair{T}\xleftarrow{} (\A^1//\Gm)\xrightarrow{}\bb G_m^{\wedge 1}\otimes { S}^1,\]
where the right attow is equal to $\beta\alpha$, defined in \eqref{ska-vs-dinamo}.
Its arrows become motivic weak equivalences after applying the functor $spc.$

by Proposition~\ref{cor:A1_fibrant_3}, for each integer $n\geq 0$, the $S^1$-spectra
\[M_{fr}(\pair{B}\wedge
\pair{T}^{\wedge n}\wedge \bb G_m^{\wedge 1}\otimes { S}^1)_f,  M_{fr}(\pair{B}\wedge
\pair{T}^{\wedge n}\wedge (\A^1//\Gm))_f\]
are motivically fibrant, and
\[C_*(Fr(\pair{B}\wedge \pair{T}^{\wedge n}\wedge \bb G_m^{\wedge 1}\otimes {S}^1))_f, C_*Fr(\pair{B}\wedge \pair{T}^{\wedge n}\wedge (\A^1//\Gm))_f\]
are motivically fibrant spaces.
By Corollary~\ref{T-cone}
\[M_{fr}(\pair{B}\wedge \pair{T}^{\wedge n}\wedge (\A^1//\Gm)) \to M_{fr}(\pair{B}\wedge \pair{T}^{\wedge n+1})\]
is Nisnevich-locally a stable weak equivalence of spectra. By Theorem~\ref{AlphaBeta},
\[M_{fr}(\pair{B}\wedge \pair{T}^{\wedge n}\wedge (\A^1//\Gm))\to M_{fr}(\pair{B}\wedge \pair{T}^{\wedge n}\wedge\Gm^{\wedge 1}\otimes {S}^1)\]
is a Nisnevich-local stable weak equivalence of spectra. By Lemma~\ref{chempion},
	\[\alpha_T:M_{fr}(\pair{B}\wedge \pair{T}^{\wedge n})_f\to\underline{\Hom}(T,M_{fr}(\pair{B}\wedge \pair{T}^{\wedge n+1})_f)\]
is a schemewise stable equivalence of spectra if and only if the morphism of spectra
	\[\alpha_{\Gm^{\wedge 1}\otimes {S}^1}:M_{fr}(\pair{B}\wedge \pair{T}^{\wedge n})_f\to \underline{\Hom}((\Gm^{\wedge 1}\otimes {S}^1),M_{fr}(\pair{B}\wedge \pair{T}^{n}\wedge\Gm^{\wedge 1}\otimes {S}^1)_f).\]
is a schemewise stable equivalence of spectra.
We temporarily denote $X//U$ by $c(\pair{B})$, $\A^1//\Gm$ by
$c(\pair{T})$, and consider the following commutative diagram:

\[\begin{tikzcd}
{M_{fr}(c(\pair{B})\wedge c(\pair{T}) ^{\wedge n})_f} \arrow[r,"{\alpha_{\Gm^{\wedge 1}\otimes {S}^1}}"] \arrow[d]
& \underline{\Hom}((\Gm^{\wedge 1}\otimes {S}^1),M_{fr}(c(\pair{B})\wedge c(\pair{T})^{\wedge n}\wedge \Gm^{\wedge 1}\otimes {S}^1)_f) \arrow[d] \\
M_{fr}(\pair{B}\wedge \pair{T}^{\wedge n})_f\arrow[r,"{\alpha_{\bb G_m^{\wedge 1}\otimes {S}^1}}"]
& \underline{\Hom}((\bb G_m^{\wedge 1}\otimes {S}^1),M_{fr}(\pair{B}\wedge \pair{T}^{n}\wedge \bb G_m^{\wedge 1}\otimes {S}^1)_f)
\end{tikzcd}\]

where the lower index $f$ denotes passing
to the stable Nisnevich-local fibrant replacement of spectra, and
$c(\pair{T})^{\wedge n}=(\A^1//\Gm)^{\wedge n} \in \Delta^{op}Fr_0(k)$ was constructed in Definition~\ref{wedge}.

It follows from Theorem~\ref{Quotient-Cone}, and Note~\ref{n:schemewise} below that the left vertical arrow is a schemewise stable equivalence of spectra, hence the right vertical arrow is such. Therefore, the bottom arrow is a schemewise stable equivalence of spectra if and only if the top arrow is such. But that is a schemewise stable weak equivalence by the Cancellation theorem for framed motives of algebraic varieties~\cite[Theorem~A]{AGP} combined with \cite[Теорема 6.5]{GP}.
To conclude, the morphism $\alpha_T$, and with it the morphism $u^*\circ a_T,$ are motivic equivalences.
\end{proof}

\begin{note} \label{n:schemewise}
Let $A_f \to B_f$ be a weak equivalence between fibrant obects in the stable injective local model structure on $S^1$-spectra of simplicial sheaves on $Sm/k.$ Then it is a schemewise stable equivalence of $S^1$-spectra. 

Indeed, a simple lifting property check shows that the homotopy fiber of the morphism $A_f \to B_f$ is fibrant. It is also (locally) weakly equivalent to the point $*.$ We apply the lifting property of this weak fibration to cofibrations $B \otimes S^m \to B \otimes (S^m \wedge I_*)$ where $I_*$ is the pointed segment. The lifting property shows that the homotopy fiber space has no homotopy groups over an arbitrary scheme $B.$
\end{note}

Below we prove the statements from groups (3) and (4).

\section{Reducing Theorem~\ref{Quotient-Cone} to the linear Cone theorem}

The following is a generalisation of \cite[Theorem 1.2]{GNP}.

\begin{theorem} \label{ZM_fr_and_LM_fr}
Assume $char(k) \neq 2.$
For any pair $\pair{B} \in SmOp(Fr_0(k)),$ the following natural morphism of framed $S^1$-spectra is a schemewise stable equivalence:

\[\lambda_{\pair{B}}:\bb Z\Fr_*^{S^1}(\pair{B})\to EM(\ZF_*(-,\pair{B}))\]

Furthermore, the natural morphism of framed $S^1$-spectra

\[l_{\pair{B}}: \bb ZM_{fr}(\pair{B})\to LM_{fr}(\pair{B})\]

is a schemewise stable equivalence.  In particular, for each $U\in Sm/k,$ 

\[\pi_*(\bb ZM_{fr}(\pair{B})(U))=H_*(\ZF(\Delta^\bullet\wedge U,\pair{B}))=H_*(C_*\bb Z\F(U,\pair{B})).\]

\end{theorem}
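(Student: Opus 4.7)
The proof follows the strategy of \cite[Theorem 1.2]{GNP} adapted to the pair setting. The key structural observation is that the $\Gamma$-space
\[ \Gamma_{\pair{B}} \colon K \in \Gamma^{\op} \longmapsto \bb Z\Fr(-, \pair{B} \otimes K) \]
underlying $\bb Z\Fr_*^{S^1}(\pair{B})$ takes values in simplicial abelian groups rather than merely commutative monoids, since by Definition~\ref{ZF} the linear framed correspondences are free abelian groups on the bases $F_n$ of connected-support correspondences. Segal's classical theorem identifies the Segal spectrum of such an abelian-group-valued $\Gamma$-space with the Eilenberg--MacLane spectrum of its underlying group $\bb Z\Fr(-, \pair{B})$; this is exactly the content of $\lambda_{\pair{B}}$ being an equivalence.

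First I would verify the Segal special condition: for any smooth $U$ and any finite pointed set $k_+$, the natural map
\[ \bb Z\Fr(U, \pair{B} \otimes k_+) \longrightarrow \bigoplus_{i=1}^{k} \bb Z\Fr(U, \pair{B}) \]
is an isomorphism. This reduces to the identification $\pair{B} \otimes k_+ = \bigvee_{i=1}^k \pair{B}$ in $\Delta^{\op} SmOp(Fr_0(k))$, combined with additivity of $\bb Z\Fr$ on wedges, which in turn follows from the basis description in Definition~\ref{ZF}: each connected-support correspondence lies in exactly one wedge summand of the target. This additivity is inherited verbatim from the variety case, because on the level of the underlying supports $Z$ and frames $\phi$ in Definition~\ref{d:Fr-pair}(I) nothing changes when one passes from varieties to pairs. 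Segal's theorem then yields $\lambda_{\pair{B}}$ as a schemewise stable equivalence, since the comparison holds at every individual smooth $U$.

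Next, apply the functor $U \mapsto (\Delta^\bullet \times U)$ levelwise to $\lambda_{\pair{B}}$ to obtain $l_{\pair{B}} \colon \bb ZM_{fr}(\pair{B}) \to LM_{fr}(\pair{B})$; since this operation preserves schemewise stable equivalences, $l_{\pair{B}}$ is one as well. The final identification of $\pi_*(\bb ZM_{fr}(\pair{B})(U))$ with the homology of $C_*\bb Z\F(U,\pair{B})$ then follows from Dold--Kan applied to the Eilenberg--MacLane spectrum: the homotopy groups of $EM(A)$ for a simplicial abelian group $A$ coincide with the homology of its associated normalized chain complex.

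The main obstacle is the careful verification of the Segal special condition in the pair setting, where one must confirm that the wedge decomposition of $\pair{B} \otimes k_+$ in $\Delta^{\op} SmOp(Fr_0(k))$ interacts cleanly with the relative structure $\pair{B} = (X, X-S)$, so that each framed correspondence with connected support contributes to exactly one summand of the direct sum. Once this compatibility is established, the remainder of the argument is formal and parallel to the variety case of \cite[Theorem 1.2]{GNP}.
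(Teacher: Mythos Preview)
Your proposal is correct and aligns with the paper's approach: the paper's proof is a single sentence asserting that the argument of \cite[Appendix~B]{GNP} carries over verbatim to arbitrary pairs $\pair{B}$ in place of $X\wedge\pair{T}^{\wedge n}$, and what you have written is precisely a sketch of that argument (Segal special condition via additivity of $\bb Z\Fr$ on wedges, identification of the Segal spectrum of an abelian-group-valued special $\Gamma$-space with the Eilenberg--MacLane spectrum, then application of $C_*$ and Dold--Kan). Your outline is in fact more detailed than the paper's own proof.
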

The notations used in this theorem are introduced analogously to \cite[Раздел 8]{GNP}

\begin{proof}
The proof in \cite[Appendix B]{GNP} translates without change to a proof for an arbitrary smooth open pair $\pair{B}$ in place of $X \wedge \pair{T}^{\wedge n}.$
\end{proof}

Reduce Theorem~\ref{Quotient-Cone} to Theorem~\ref{genCone} below.

\begin{proof}
Prove via induction in $n$. The base is $n=0.$
By Theorem~\ref{genCone}, the morphism 
\begin{equation}\label{eq: C_*_X/X-S}
C_*\bb Z\F(X)/C_*\bb Z\F(X-S) \to C_*\bb Z\F(\pair{B})
\end{equation}
of sheaves of abelian groups is a loval quasiisomorphism.

From the beginning of~\cite[Section 8]{GNP} (or, in the case of the last spectrum, by an analogous argument), we know that the $S^1$-spectra

\[LM_{fr}(X), LM_{fr}(X-S) \text{ and } LM_{fr}(\pair{B})\]

 are Eilenberg-MacLane $S^1$-spectra of complexes $C_*\bb Z\F(X)$, $C_*\bb
Z\F(X-S)$ and $C_*\bb Z\F(\pair{B})$ respectively. Therefore the morphism
\begin{equation*}\label{eq: LM_fr_X/X-S}
LM_{fr}(X)/LM_{fr}(X-S)
\to LM_{fr}(\pair{B})
\end{equation*}
induced by~\eqref{eq: C_*_X/X-S} is a stable weak equivalence, hence the morphism

   \[\bb ZM_{fr}(X)/\bb ZM_{fr}(X-S) \to \bb ZM_{fr}(\pair{B}),\]
	
also is a stable weak equivalence	
by Theorem~\ref{ZM_fr_and_LM_fr}. The $S^1$-spectra $M_{fr}(X)$, $M_{fr}(X-S)$, $M_{fr}(\pair{B})$ are
$(-1)$-connected, since they are Segal spectra (see \cite[Definition 5.2]{GP}, \cite[Proposition 1.4]{Seg})

The stable Whitehead Theorem~\cite[II.6.30]{Sch}
implies that the morphism
\begin{equation}
M_{fr}(X)/M_{fr}(X-S) \to M_{fr}(\pair{B})
\end{equation}
is a local stable weak equivalence.

The induction base is thus proved, and we move on to proving the induction step $n \to n+1$.

\begin{equation}\label{eq: C_*_A1/Gm}
C_*\bb Z\F(\pair{B} \wedge \pair{T}^{\wedge n} \wedge \A^1)/C_*\bb Z\F(\pair{B} \wedge \pair{T}^{\wedge n}\wedge  \Gm) \to C_*\bb Z\F(\pair{B} \wedge \pair{T}^{\wedge n+1})
\end{equation}
is a local quasiisomorphism, by Thoeorem~\ref{genCone}.

By the same argument as given in the beginning of~\cite[Section 8]{GNP}, the $S^1$-spectra

\[LM_{fr}(\pair{B} \wedge \pair{T}^{\wedge n}\wedge \A^1), LM_{fr}(\pair{B} \wedge \pair{T}^{\wedge n}\wedge  \Gm) \text{ and } LM_{fr}(\pair{B} \wedge \pair{T}^{\wedge n+1})\]

 are Eilenberg-MacLane $S^1$-spectra of complexes

\[C_*\bb Z\F(\pair{B} \wedge \pair{T}^{\wedge n}\wedge \A^1), C_*\bb Z\F(\pair{B} \wedge \pair{T}^{\wedge n}\wedge  \Gm) \text{ and } C_*\bb Z\F(\pair{B} \wedge \pair{T}^{\wedge n+1})\]

 respectively. Therefore the morphismm

\begin{equation*}\label{eq: LM_fr_A1/Gm}
LM_{fr}(\pair{B} \wedge \pair{T}^{\wedge n}\wedge \A^1)/LM_{fr}(\pair{B} \wedge \pair{T}^{\wedge n}\wedge  \Gm)
\to LM_{fr}(\pair{B} \wedge \pair{T}^{\wedge n+1})
\end{equation*}

induced by~\eqref{eq: C_*_A1/Gm} is a local stable weak equivalence, therefore the morphism

   \[\bb ZM_{fr}(\pair{B} \wedge \pair{T}^{\wedge n}\wedge \A^1)/\bb ZM_{fr}(\pair{B} \wedge \pair{T}^{\wedge n}\wedge  \Gm) \to \bb ZM_{fr}(\pair{B} \wedge \pair{T}^{\wedge n+1})\]
	
	is also such,
by Theorem~\ref{ZM_fr_and_LM_fr}.

The $S^1$-spectra $M_{fr}(\pair{B} \wedge \pair{T}^{\wedge n}\wedge
\A^1)$, $M_{fr}(\pair{B} \wedge \pair{T}^{\wedge n}\wedge \Gm)$, $M_{fr}(\pair{B} \wedge \pair{T}^{\wedge n+1})$
are $(-1)$-connected, since they are Segal spectra (see \cite[Definition 5.2]{GP}, \cite[Proposition 1.4]{Seg}).

By the stable Whitehead theorem~\cite[II.6.30]{Sch}, the morphism

\begin{equation}\label{eq: M_fr_A1/Gm}
M_{fr}(\pair{B} \wedge \pair{T}^{\wedge n}\wedge \A^1)/M_{fr}(\pair{B} \wedge \pair{T}^{\wedge n}\wedge  \Gm) \to M_{fr}(\pair{B} \wedge \pair{T}^{\wedge n+1})
\end{equation}

is a local stable weak equivalence. Consider the following sequence of natural morphisms
\begin{multline*}
M_{fr}((X//U) \wedge (\A^1//\Gm)^{\wedge n+1})=M_{fr}((X//U)\wedge (\A^1//\Gm)^{\wedge n}\wedge (\A^1//\Gm))\xrightarrow{(1)} \\
Cone[M_{fr}((X//U)\wedge (\A^1//\Gm)^{\wedge n}\wedge \Gm ) \to M_{fr}((X//U)\wedge (\A^1//\Gm)^{\wedge n}\wedge \A^1)] \xrightarrow{(2)} \\
Cone[M_{fr}(\pair{B}\wedge \pair{T}^{\wedge n}\wedge \Gm) \to M_{fr}(\pair{B}\wedge \pair{T}^{\wedge n}\wedge \A^1)] \xrightarrow{(3)} \\
M_{fr}(\pair{B}\wedge \pair{T}^{\wedge n}\wedge \A^1)/M_{fr}(\pair{B}\wedge \pair{T}^{\wedge n}\wedge \Gm) \xrightarrow{(4)}
M_{fr}(\pair{B}\wedge \pair{T}^{\wedge n+1}).
\end{multline*}
Arrows $(1)$ and $(3)$ are schemewise stable weak equivalences by standard arguments. Arrow $(2)$ is a local stable weak equivalence by the induction hypothesis. Arrow $(4)$ is exactly the morphism~\eqref{eq: M_fr_A1/Gm}, hence it is a local stable weak equivalence.

Hence for all $\ell\geq 0,$ the canonical morphism
\begin{equation}\label{eq:cone_and_T}
M_{fr}((X//U)\wedge (\A^1//\Gm)^{\wedge\ell})\to
M_{fr}(\pair{B}\wedge \pair{T}^{\ell})
\end{equation}
is a local stable weak equivalence.

\end{proof}

Similar to~\cite[Следствие 8.1]{GNP}, we deduce from this theorem Corollary~\ref{T-cone} (we actually use a variant of the theorem for simplicial pairs).

\section{The linear cone theorem}

In this section a plan of proof of the following theorem is laid out:

\begin{theorem} \label{genCone}

Let $\pair B=(X,U),\pair M=(X',U')$ be pairs, i.e. objects of $SmOp(Fr_0(k).$
Then the sequence 

\[ С_* \ZF(-,\pair M \wedge U) \to С_* \ZF(-,\pair M \wedge X) \to С_* \ZF(-,\pair M \wedge \pair B) \]

can be included in a distinguished triangle in the derived category of Nisnevich sheaves.
\end{theorem}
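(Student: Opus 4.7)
\begin{plan}
My plan is to identify the mapping cone of the first map with the third term of the sequence, in the derived category of Nisnevich sheaves of abelian groups; this directly yields the desired distinguished triangle. First I set up the maps. The morphisms of pairs $(U,\emptyset) \to (X,\emptyset) \to (X,U) = \pair{B}$ in $SmOp(Fr_0(k))$, smashed with $\pair{M}$ and followed by $\bb Z F_m(-,\cdot)$, $C_*$, and the colimit over $m$, yield morphisms $j_*$ and $i_*$. Direct inspection of generators shows that $j_*$ is termwise injective and $i_* \circ j_* = 0$: a generator coming from $\pair{M} \wedge U$ has support landing over $(S'\times S) \cap (X'\times U) = \emptyset$ since $S \cap U = \emptyset$. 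Consequently the termwise cokernel complex $Q_\bullet := C_*\ZF(-,\pair{M} \wedge X)/C_*\ZF(-,\pair{M} \wedge U)$ computes the mapping cone of $j_*$, and $i_*$ factors canonically through a morphism $q: Q_\bullet \to C_*\ZF(-,\pair{M} \wedge \pair{B})$. It remains to show that $q$ is a Nisnevich-local quasi-isomorphism.

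The crux is checking that $q$ induces isomorphisms on Henselian local homology. For surjectivity, given a generator $c = (Z,W,\phi,g) \in F_m(U_0, \pair{M} \wedge \pair{B})$ with $Z = Z(\phi) \cap g^{-1}(S'\times S)$ finite over a Henselian local $U_0$, I plan to construct an explicit $\A^1$-homotopy in $C_1 \ZF_m(U_0, \pair{M} \wedge \pair{B})$ linking $c$ to a correspondence $c'$ whose \emph{enlarged} support $Z(\phi') \cap g'^{-1}(S'\times X)$ is already finite over $U_0$. Concretely, I would interpolate $\tilde\phi_i := t\phi_i + (1-t)\phi_i^{\mathrm{gen}}$ along the $\Delta^1$-parameter $t$, with $\phi_i^{\mathrm{gen}}$ chosen to cut $W$ transversally near $Z$; Hensel's lemma and Zariski's Main Theorem then guarantee Nisnevich-local finiteness of the total support, while $c'$ lifts directly to $F_m(U_0, \pair{M} \wedge X)$. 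Injectivity on homology is handled by a parallel homotopy argument, showing any cycle that becomes a boundary in $C_*\ZF(-,\pair{M} \wedge \pair{B})$ is already a boundary modulo the image of $C_*\ZF(-,\pair{M} \wedge U)$.

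Once $q$ is shown to be a Nisnevich-local quasi-isomorphism, the distinguished triangle in the derived category of Nisnevich sheaves follows from standard homological algebra: the termwise short exact sequence $0 \to C_*\ZF(-,\pair{M} \wedge U) \to C_*\ZF(-,\pair{M} \wedge X) \to Q_\bullet \to 0$ of complexes of Nisnevich sheaves gives a distinguished triangle, and the identification $Q_\bullet \simeq C_*\ZF(-,\pair{M} \wedge \pair{B})$ via $q$ produces the asserted triangle. The main obstacle is the construction of the $\A^1$-homotopies: already a simple example with $\pair{M} = \pair{B} = \pair{T}$ and the level-$1$ correspondence $c = (\{0\}, \A^1, \phi \equiv 0, g(s) = (0,s))$ shows that strict set-theoretic lifting from $F_m(-,\pair{M} \wedge X)$ onto $F_m(-,\pair{M} \wedge \pair{B})$ fails, so one genuinely needs the simplicial freedom of $C_*$. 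Producing these homotopies uniformly across Nisnevich-local situations, without any extra hypothesis on the codimension of $S$ in $X$, is the principal technical challenge.
\end{plan}
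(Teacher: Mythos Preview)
Your overall architecture is the same as the paper's: identify the termwise cokernel $Q_\bullet$ with $C_*\ZF(-,\pair{M}\wedge\pair{B})$ by a Nisnevich-local quasi-isomorphism, the crux being $\A^1$-homotopies moving an arbitrary correspondence to one whose enlarged support $Z(\phi)\cap g^{-1}(S'\times X)$ is quasifinite. But the specific homotopy you propose does not work. The interpolation $\tilde\phi_i = t\phi_i + (1-t)\phi_i^{\mathrm{gen}}$ does not in general define an element of $Fr_m(U_0\times\A^1,\pair{M}\wedge\pair{B})$: the support $Z(\tilde\phi)\cap g^{-1}(S'\times S)$ now varies with $t$, and a generic perturbation of $\phi$ bears no relation to $S$, so nothing forces this family of supports to stay finite over $U_0\times\A^1$. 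Invoking Hensel and Zariski's Main Theorem does not help --- those control quasifiniteness once you already have a reasonable morphism, but they cannot manufacture finiteness of an unrelated intersection.

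The paper supplies the structural device you are missing. First one reduces by Mayer--Vietoris (Proposition~\ref{p:MV}) to $X$ quasiaffine, so that $S=V(f_1,\ldots,f_n)$ for global regular functions $f_j$. Then the homotopy is
\[
h_d^k(c)=\bigl(Z\times\A^1,\ W\times\A^1,\ \phi_i - s\,(f_k\circ pr_2\circ g)^{d^i};\ g\bigr).
\]
Since $f_k$ vanishes on $S$, the support of $h_d^k(c)$ as a correspondence to $\pair{M}\wedge\pair{B}$ is exactly $Z\times\A^1$ for every $s$, so the homotopy is automatically valid; only the \emph{enlarged} zero locus moves, and the exponents $d^i$ together with a degree filtration $(Fr_m^{qf,k})^{<d}$ force it to become quasifinite (Lemma~\ref{l:homotopy_properties}). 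One then iterates over $k=1,\ldots,n$ and passes to a colimit in $d$ to get maps on $C_*$. The missing idea, in a phrase, is to perturb $\phi$ only by functions that vanish on $g^{-1}(S'\times S)$, so the support stays put --- and for that you need the quasiaffine reduction and the explicit defining functions $f_j$.
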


\begin{proof}
We reduce the Theorem to Proposition~\ref{p:QAff_Cone} below. Proof of the Proposition will be completed in Section~\ref{Moving}.

Let $X= \bigcup X_i$ be an affine cover of the scheme $X.$ Enumerate the cover elements with ordinals. We prove the Theorem for $\pair B=(\bigcup \limits_{i \leq k} X_i,U \cap \bigcup \limits_{i \leq k} X_i), \pair M=(X',U')$ by transfinite induction on $k.$ 

Since for sheaves to be equal it suffices to show equality on affine smooth schemes (which are quasicompact), the transfinite induction step is achieved by passing to a filtered colimit of complexes of abelian groups (which is an exact functor). The base of induction $k=0$ is the statement of Proposition~\ref{p:QAff_Cone}. We now prove the induction step $k \to k+1.$

We adopt temporary brief notations

\[F_k =\bigcup \limits_{i \leq k} X_i, C_{\bullet}(\pair B)=С_* \ZF(-,Q \wedge \pair B).\]

We then have a commutative diagram

\[\begin{tikzcd}
 C_{\bullet} (U \cap F_k \cap X_{k+1}) \arrow[r] \arrow[d] & C_{\bullet}(F_k \cap X_{k+1}) \arrow[r] \arrow[d] & C_{\bullet} (F_k \cap X_{k+1}, U \cap F_k \cap X_{k+1}) \arrow[d] \\
 C_{\bullet} (U \cap F_k ) \oplus C_{\bullet} (U \cap X_{k+1} ) \arrow[r] \arrow[d] & C_{\bullet}(F_k ) \oplus C_{\bullet}(X_{k+1} )\arrow[r] \arrow[d] & C_{\bullet}(F_k, U \cap F_k) \oplus  C_{\bullet}(X_{k+1}, U \cap X_{k+1})\arrow[d] \\
 C_{\bullet}(U \cap F_{k+1}) \arrow[r] & C_{\bullet}(F_{k+1}) \arrow[r] & C_{\bullet}(F_{k+1}, U \cap F_{k+1}) 
\end{tikzcd}.\]

Its columns are Meyer-Vietoris triangles, which are distinguished by Proposition~\ref{p:MV}. The first line is destinguished by Proposition~\ref{p:QAff_Cone} applied to the quasiaffine scheme $F_k \cap X_{k+1}.$ The second line is a direct sum of a thiangle distinguished by the induction hypothesis and a triangle distinguished by Proposition~\ref{p:QAff_Cone} applied to the affine scheme $X_{k+1}.$ Consequently, the third line is also distinguished, thus the induction step $k \to k+1$ is proved.

\end{proof}

\begin{proposition}\label{p:QAff_Cone}

Let $\pair B=(X,U),\pair M=(X',U')$ be pairs, i.e. objects of $SmOp(Fr_0(k).$

Suppose that the scheme $X$ is quasiaffine.
 
Then the sequence 

\[ С_* \ZF(-,\pair M \wedge U) \to С_* \ZF(-,\pair M \wedge X) \to С_* \ZF(-,\pair M \wedge \pair B) \]

is included in a distinguished triangle in the derived category of Nisnevich sheaves of abelian groups.
\end{proposition}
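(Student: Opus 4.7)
The plan is to realise the given sequence as a Nisnevich-locally short exact sequence of complexes of sheaves of abelian groups, from which the distinguished triangle follows formally. Write $S' = X' - U'$ and, for a map $g \colon W \to X' \times X$, denote its two coordinates by $g = (g', g'')$.

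First I would verify that the composition vanishes on the nose: a basis element $(Z, W, \phi, g) \in \ZF_n(B, \pair M \wedge U)$ has $g'' \colon W \to U$, so its image in $\ZF_n(B, \pair M \wedge \pair B)$ has restricted support $Z \cap (g'')^{-1}(X - U) = \emptyset$. This yields a canonical morphism from the cofiber of the first arrow to $C_* \ZF(-, \pair M \wedge \pair B)$, and it remains to prove that this morphism is a Nisnevich-local quasi-isomorphism.

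Next I would set up a two-piece decomposition of the generators of $\ZF(-, \pair M \wedge X)$. Let $\ZF^U(-, \pair M \wedge X)$ be the subpresheaf generated by connected-support basis elements with $Z \subset (g'')^{-1}(U)$; shrinking $W$ to the open subscheme $(g'')^{-1}(U)$ identifies this subpresheaf with $\ZF(-, \pair M \wedge U)$. Symmetrically, let $\ZF^\partial(-, \pair M \wedge X)$ be the subpresheaf generated by basis elements with $Z \subset (g'')^{-1}(X - U)$; the restriction of the quotient map $\ZF(-, \pair M \wedge X) \to \ZF(-, \pair M \wedge \pair B)$ to $\ZF^\partial$ is a Nisnevich-local isomorphism (one may need to shrink $W$ to discard extraneous connected components of $Z(\phi) \cap (g')^{-1}(S')$ disjoint from $Z$). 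The remaining basis generators are ``mixed'': their connected support $Z$ meets both $(g'')^{-1}(U)$ and $(g'')^{-1}(X - U)$.

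The core of the argument, and its main obstacle, is the moving lemma proved in the ``Moving'' section: for quasiaffine $X$, every cycle in $C_* \ZF(-, \pair M \wedge X)$ is, Nisnevich-locally on the base, $\A^1$-homologous to a sum of a cycle lying in $C_* \ZF^U$ and a cycle lying in $C_* \ZF^\partial$; and any homology relation between cycles in $\ZF^U \oplus \ZF^\partial$ that holds in $\ZF(-, \pair M \wedge X)$ already holds separately in each piece. The quasiaffine hypothesis is what makes this possible: one chooses a closed embedding $X \hookrightarrow \A^N$ and constructs the required $\A^1$-deformation of $g''$ using the affine coordinates together with a suitable regular function cutting out $X - U$, exploiting the wealth of global regular functions a quasiaffine $X$ provides. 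This is precisely the step that would fail for a general $X$. Once the moving lemma is in hand, one obtains the Nisnevich-local short exact sequence
\[ 0 \to C_* \ZF(-, \pair M \wedge U) \to C_* \ZF(-, \pair M \wedge X) \to C_* \ZF(-, \pair M \wedge \pair B) \to 0 \]
of complexes of Nisnevich sheaves, and the required distinguished triangle in the derived category follows at once.
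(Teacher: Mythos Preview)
Your decomposition $\ZF(-,\pair M\wedge X)=\ZF^U+\ZF^\partial+\text{mixed}$ is the right intuition, but the step you call trivial is in fact the hard one, and the step you call the moving lemma is the easy one.

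Concretely, the claim that $\ZF^\partial\to\ZF(-,\pair M\wedge\pair B)$ is a Nisnevich-local isomorphism is false. Take $B=\spec k$, $m=1$, $\pair M=(\A^1,\Gm)$, $\pair B=(\A^1,\Gm)$, and the explicit class $(Z=\{0\},\,W=\A^1,\,\phi\equiv 0,\,g(t)=(0,t))\in Fr_1(\spec k,\pair M\wedge\pair B)$. Here the support $Z(\phi)\cap g^{-1}(S'\times S)=\{0\}$ is a point, but $Z(\phi)\cap g^{-1}(S'\times X)=\A^1$ is the whole of $W$; no shrinking of $W$ around $Z$ will make this finite, so the class has no lift to $\ZF(-,\pair M\wedge X)$ at all, let alone to $\ZF^\partial$. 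Shrinking $W$ can discard \emph{closed} components of $Z(\phi)\cap (g')^{-1}(S')$ disjoint from $Z$, but here there is a single component and it properly contains $Z$.

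What is actually true is that the image of $\ZF(-,\pair M\wedge X)\to\ZF(-,\pair M\wedge\pair B)$, Nisnevich-locally, is exactly the sub\-presheaf $\ZF^{qf}$ of classes admitting a representative with $Z(\phi)\cap g^{-1}(S'\times X)$ \emph{quasi\-finite} over $B$. With $\ZF^{qf}$ in place of $\ZF(-,\pair M\wedge\pair B)$ the three-term sequence is Nisnevich-locally short exact for formal reasons (Henselian splitting of a quasifinite scheme over a Henselian local base); this subsumes your ``splitting of mixed cycles'' and does not use the quasiaffine hypothesis. The genuine moving lemma is the assertion that $C_*\ZF^{qf}\hookrightarrow C_*\ZF(-,\pair M\wedge\pair B)$ is a quasi-isomorphism, and this is where quasiaffineness enters: one fixes global functions $f_1,\ldots,f_n\in k[X]$ cutting out $S=X\setminus U$ and deforms the \emph{framing} via $\phi_i\rightsquigarrow\phi_i-s\cdot(f_k\circ g'')^{d^i}$, which progressively forces $Z(\phi)\cap g^{-1}(S'\times\{f_1=\cdots=f_k=0\})$ to become quasifinite. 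Deforming $g''$ inside an ambient $\A^N$, as you suggest, would take you out of $X$; the point is to leave $g$ alone and move $\phi$ instead.
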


In what follows (until Section~\ref{s:connectedness}) we suppose $X$ quasiaffine.

Denote $S=X-U, S'=X'-U'.$
$S=X-U$ can be given as a zero set of some functions $f_1, \cdots, f_n \in k[X].$ Fix such a set of functions. Our objects now satisfy the following conditions:

\begin{conditions}\label{QAff}
$\pair B=(X,U),\pair M=(X',U') \in SmOp(Fr_0(k))$ are pairs of smooth schemes; $S=X-U, S'=X'-U'$ (not necessarily smooth); such $f_1,\ldots f_n \in k[X]$ are chosen that $S=V(f_1,\ldots,f_n).$
\end{conditions}

\begin{definition} \label{d:qf,k}
Under Conditions~\ref{QAff},
$Fr_m^{qf,k}(B,\pair M,\pair B;f_1, \cdots f_n)$ is the subset in $Fr_m(B,\pair M \wedge \pair B)$ consisting of classes containing such explicit correspondences $(Z,W,\phi,g)$ that $\{\phi=0\} \cap g^{-1}(S' \times \{f_1=0, \cdots f_k=0\}) \subset W$ is quasifinite over $B.$ 
\end{definition}  

\begin{note}
The property of the explicit framed correspondence described in Definition~\ref{d:qf,k} above is not respected by equivalence of framed correspondences. Refining $W$ can makes the set $\{\phi=0\} \cap g^{-1}(S' \times \{f_1=0, \cdots f_k=0\}) \subset W$ smaller, and it can become quasifinite over $B$, even if it was not before. However, once it is quasifinite, refining $W$ will not change that property. 
\end{note}
 
From this, translating the analogous constructions from framed correspondences, one can introduce

\begin{itemize}
\item{$Fr^{qf,k}(B,\pair M,\pair B;f_1, \cdots f_n)$}
\item{$F_m^{qf,k}(B,\pair M,\pair B;f_1, \cdots f_n)$}
\item{$F^{qf,k}(B,\pair M,\pair B;f_1, \cdots f_n)$}
\item{$\ZF_m^{qf,k}(B,\pair M,\pair B;f_1, \cdots f_n)$}
\item{$\ZF^{qf,k}(B,\pair M,\pair B;f_1, \cdots f_n)$}
\item{$C_*\ZF^{qf,k}(B,\pair M,\pair B;f_1, \cdots f_n)$}

\end{itemize}

It is clear from the definition that $Fr_m^{qf,n}(B, \pair M \wedge \pair B;f_1, \cdots f_n) = Fr_m(B,\pair M \wedge \pair B).$  

$Fr_m^{qf,0}(B,\pair M,\pair B;f_1, \cdots f_n)$ is independent of $f_1, \cdots f_n,$ it only depends on their common zeros $S,$ thus we give it a separate name

\begin{definition}
$Fr_m^{qf}(B,\pair M,\pair B)$ is the subset of $Fr_m(B,\pair M \wedge \pair B)$ consisting of classes containing such correspondences $(Z,W,\phi,g)$ that $\{\phi=0\} \cap g^{-1}(S' \times X) \subset W$ is quasifinite over $B.$ 
\end{definition}

The proof of Proposition~\ref{p:QAff_Cone} splits into two steps.

\begin{proof}
The statement of the Proposition follows from Theorems~\ref{th:loc.exact} and~\ref{th:moving} below.
\end{proof}

\begin{theorem} \label{th:loc.exact}

Let $\pair B=(X,U),\pair M=(X',U')$ be pairs, i.e. objects of $SmOp(Fr_0(k).$
Then the sequence 

\[ С_*\ZF(-,\pair M \wedge U) \to С_*\ZF(-,\pair M \wedge X) \to С_*\ZF^{qf}(-,\pair M,\pair B) \]

is Nisnevich-locally exact.
\end{theorem}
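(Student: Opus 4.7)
My plan is to verify Nisnevich-local exactness by passing to Henselian local stalks. Fix an essentially smooth Henselian local $k$-scheme $\Spec R$ and an integer $n \geq 0$; set $B := \A^n \times \Spec R$. I have to show that the sequence
\[ \ZF(B, \pair M \wedge U) \xrightarrow{\iota} \ZF(B, \pair M \wedge X) \xrightarrow{\pi} \ZF^{qf}(B, \pair M, \pair B) \]
is exact, after possibly further Nisnevich refinement of $B$. The first observation is that the image of $\iota$ coincides with the subgroup $\ZF^{\mathrm{av}}(B) \subseteq \ZF(B, \pair M \wedge X)$ generated by basis elements $(Z, W, \phi, g)$ with $Z \cap g^{-1}(S' \times S) = \emptyset$: any such class has $Z \subseteq g^{-1}(X' \times U)$, and restricting $W$ to $W \cap g^{-1}(X' \times U)$ yields an equivalent representative whose $g$ factors through $X' \times U$, while every class in the image of $\iota$ evidently has this property. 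Since $\ZF^{\mathrm{av}}(B) \subseteq \ker \pi$ is automatic, the claim reduces to showing that the induced map $\bar\pi: \ZF(B, \pair M \wedge X)/\ZF^{\mathrm{av}}(B) \to \ZF^{qf}(B, \pair M, \pair B)$ is an isomorphism.

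For surjectivity of $\bar\pi$, take a basis element $(Y, W, \phi, g)$ of $F^{qf}(B, \pair M, \pair B)$ and set $\tilde Z := Z(\phi) \cap g^{-1}(S' \times X)$, which is quasifinite over $B$ by the $qf$-condition and contains $Y$ as a closed subscheme finite over $B$. After a Nisnevich refinement of $B$, Zariski's Main Theorem together with the Henselian splitting of quasifinite morphisms allows the decomposition $\tilde Z = \tilde Z_f \sqcup \tilde Z_g$ with $\tilde Z_f$ finite over $B$ containing $Y$, and $\tilde Z_g$ disjoint from $Y$. Replacing $W$ by the open subscheme $W \setminus \tilde Z_g$ and decomposing $\tilde Z_f$ into its connected finite-local components over each Henselian stalk of $B$, one obtains an element of $\ZF(B, \pair M \wedge X)$ whose image under $\pi$ is the given basis element $[(Y, W, \phi, g)]$ plus a sum of basis elements lying in $\ZF^{\mathrm{av}}(B)$, because the components of $\tilde Z_f$ not meeting $Y$ automatically avoid $g^{-1}(S' \times S)$.

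For injectivity of $\bar\pi$ — the main obstacle — I would construct a candidate inverse $\sigma: \ZF^{qf}(B, \pair M, \pair B) \to \ZF(B, \pair M \wedge X)/\ZF^{\mathrm{av}}(B)$ sending $[(Y, W, \phi, g)]$ to the class of the unique connected component of $\tilde Z_f$ containing $Y$ at each Henselian stalk (using the splitting of the previous step). The well-definedness of $\sigma$ reduces to the following germ-propagation claim: if two representatives $(Y, W_1, \phi_1, g_1)$ and $(Y, W_2, \phi_2, g_2)$ define the same $qf$-class — i.e., there is an étale neighbourhood $W''$ of $Y$ in $W_1 \times_{\A^m_B} W_2$ on which $\phi_1 \circ pr_1 = \phi_2 \circ pr_2$ and $g_1 \circ pr_1 = g_2 \circ pr_2$ — then the corresponding finite-local components of $\tilde Z_{f,1}$ and $\tilde Z_{f,2}$ through $Y$ coincide as closed subschemes of $\A^m \times B$, and the matching of $(\phi, g)$-germs at the closed point of $Y$ propagates to an étale neighbourhood of the entire finite-local component (which is supported at the very same closed point). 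Once this is established, a routine check that $\bar\pi \circ \sigma$ and $\sigma \circ \bar\pi$ are identities on basis elements completes the proof. I expect the germ-propagation argument, combined with a careful bookkeeping for arbitrary linear combinations in $\ker \pi$, to be the technical heart of the theorem.
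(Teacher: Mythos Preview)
Your overall strategy---identify the image of $\iota$ with the ``avoidant'' subgroup $\ZF^{\mathrm{av}}$, then show $\bar\pi$ is an isomorphism via a splitting of the quasifinite locus---is exactly the shape of the argument in \cite[Section~5]{GNP}, to which the paper simply defers. However, there is a genuine gap in your execution.

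You set $B = \A^n \times \Spec R$ with $\Spec R$ Henselian local, and then invoke ``Henselian splitting of quasifinite morphisms'' over $B$, as well as ``Nisnevich refinement of $B$''. But $B$ is \emph{not} Henselian local, and it cannot be Nisnevich-refined: the simplicial degree $\A^n$ is fixed by the Suslin complex, and only $\Spec R$ may be replaced by a finer \'etale neighbourhood---which still leaves you with $\A^n \times \Spec R'$, again not Henselian. Your surjectivity and germ-propagation arguments both rely on $\bar Z$ (finite over $B$) decomposing into a product of Henselian local rings, so that connected components are detected at closed points; over $\A^n_R$ this fails, and a connected finite $\bar Z$ can contain two disjoint closed subschemes with no separating clopen.

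The fix, and what \cite[Section~5]{GNP} actually does, is to prove the short exact sequence
\[
0 \to \ZF_m(-,\pair M \wedge U) \to \ZF_m(-,\pair M \wedge X) \to \ZF_m^{qf}(-,\pair M,\pair B) \to 0
\]
of \emph{Nisnevich sheaves} (i.e., check exactness on genuinely Henselian local $B$, where your splitting argument is valid), and then pass to $C_*$ via a separate lemma of the type of Lemma~\ref{l:ZF -> C_*ZF} in the appendix: for pre-quasistable Nisnevich sheaves with $\ZF_*$-transfers, a short exact sequence yields a distinguished triangle after applying $C_*$. This second step is not automatic---it uses homotopy invariance and the injectivity properties of such presheaves---and cannot be replaced by naive degreewise exactness over $\A^n_R$.
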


\begin{proof}
The proof is carried over without change from the case $\pair B=(\A^1,\bb G_m)$ proved in~\cite[Section 5]{GNP}.
\end{proof}

\begin{theorem} \label{th:moving}
Under Conditions~\ref{QAff}, let $B$ be a $k$-smooth affine scheme. If $i$ is the embedding $Fr^{qf}(B,\pair M,\pair B) \hookrightarrow Fr(B,\pair M \wedge \pair B),$ then

\[C_* \bb Z i : C_* \ZF^{qf}(B,\pair M,\pair B)  \hookrightarrow C_* \ZF(B,\pair M \wedge \pair B)\]

is a quasiisomorphism.
\end{theorem}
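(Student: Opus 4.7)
The plan is to interpolate between $C_*\ZF^{qf}$ and $C_*\ZF$ via the filtration from Definition~\ref{d:qf,k},
\[
C_*\ZF^{qf} = C_*\ZF^{qf,0} \subseteq C_*\ZF^{qf,1} \subseteq \cdots \subseteq C_*\ZF^{qf,n} = C_*\ZF(B,\pair M\wedge \pair B),
\]
and to show inductively that each inclusion $j_k:C_*\ZF^{qf,k}\hookrightarrow C_*\ZF^{qf,k+1}$ is a quasi-isomorphism of chain complexes. The equality $\ZF^{qf,n} = \ZF$ holds because at $k = n$ the set $\{\phi = 0\}\cap g^{-1}(S'\times V(f_1,\ldots,f_n))$ is just $\{\phi = 0\}\cap g^{-1}(S'\times S) = Z$, automatically finite over $B$. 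Chaining the $n$ resulting quasi-isomorphisms over $k = 0,\ldots,n-1$ then gives the theorem.

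For the inductive step from $k+1$ down to $k$, which is the main content, I would work Nisnevich-locally on $B$ and on the \'etale neighbourhood $W$. Take an explicit framed correspondence $c = (Z, W, \phi_1,\ldots,\phi_m, g)$ representing a cycle in $C_p\ZF^{qf,k+1}$. By hypothesis $E_{k+1} := \{\phi = 0\} \cap g^{-1}(S'\times V(f_1,\ldots,f_{k+1}))$ is quasi-finite over $B \times \Delta^p$, while the discrepancy $E_k \setminus E_{k+1}$ is contained in the open locus $\{g^*f_{k+1}\ne 0\} \subset W$. The plan is then to build an explicit $\A^1$-homotopy $c_t$ on $W \times \A^1_t$ using $g^*f_{k+1}$ together with a Nisnevich-locally chosen auxiliary \'etale coordinate, such that $c_0 = c$, the whole homotopy lies in $\ZF^{qf,k+1}$, and at $t = 1$ the deformed zero locus of $\phi$ meets $g^{-1}(S'\times V(f_1,\ldots,f_k))$ only inside $\{g^*f_{k+1} = 0\}$, i.e.\ only inside $E_{k+1}$. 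The deformed $c_1$ therefore has its new $E_k$-set equal to its new $E_{k+1}$-set, which is still quasi-finite, so $c_1 \in \ZF^{qf,k}$. Since $C_*$ converts $\A^1$-homotopies of framed correspondences into chain homotopies, this shows $[c]$ lies in the image of $j_k$ on homology; an analogous argument applied to chains realising a boundary gives the injectivity on homology, hence the required quasi-isomorphism.

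The main obstacle is producing the homotopy $c_t$ satisfying these three conditions simultaneously. A na\"ive recipe, such as replacing $\phi_1$ by $\phi_1 \cdot g^*f_{k+1}$ at $t=1$, enlarges the zero locus of $\phi$ in $W$ and can destroy finiteness of the support over $B \times \Delta^p \times \A^1$. The Note following Definition~\ref{d:qf,k} allows us to repair this by refining the \'etale neighbourhood $W$, but effective use of that freedom requires a geometric input: a Panin-type presentation of $W$ as \'etale over a standard affine model along the direction of $f_{k+1}$, in the spirit of the Panin--Voevodsky geometric presentation lemma used in the moving theorem for $(\A^1, \Gm)$ in \cite[Section 5]{GNP}. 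The quasi-affineness of $X$ imposed in Conditions~\ref{QAff} is exactly what makes $f_1,\ldots,f_n$ global regular functions on $X$, so that their pull-backs through $g$ are available on $W$ and can be inserted into the homotopy formula. Once such a local presentation is chosen, the homotopy can be written down explicitly and the conditions $c_0 = c$, $c_t \in \ZF^{qf,k+1}$, $c_1 \in \ZF^{qf,k}$ reduce to direct verification, completing the inductive step and the theorem.
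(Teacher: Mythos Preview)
Your overall strategy is the paper's strategy: factor $i$ through the chain
\[
C_*\ZF^{qf}=C_*\ZF^{qf,0}\hookrightarrow C_*\ZF^{qf,1}\hookrightarrow\cdots\hookrightarrow C_*\ZF^{qf,n}=C_*\ZF(B,\pair M\wedge\pair B)
\]
and show each inclusion is a quasi-isomorphism. That part is fine.

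The gap is exactly where you say the main obstacle lies: the construction of the homotopy. Your suggestion --- refine $W$ Nisnevich-locally and invoke a Panin--Voevodsky geometric presentation to get an \'etale coordinate ``in the direction of $f_{k+1}$'' --- is not what the paper does, and it is not clear it can be made to work in this generality (no smoothness is assumed on $S$ or $S'$, and you need the homotopy to be compatible with arbitrary pull-backs along $B'\to B$, which a local \'etale choice is not). The reference to \cite[Section 5]{GNP} is also misplaced: that section proves the analogue of Theorem~\ref{th:loc.exact}, not of the moving step.

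The paper's actual device is purely algebraic and global. One introduces a further filtration of $\ZF_m^{qf,k}$ by degree, $(\ZF_m^{qf,k})^{<d}$, where a correspondence lies in the $d$th piece if the image of $(\phi,f_k\circ pr_2\circ g)$ is cut out (fibrewise over $B$) by a polynomial of degree $<d$ in $\A^{m+1}$. For $c=(Z,W,\phi;g)$ in this piece one writes down the explicit homotopy
\[
h_d^k(c)=\bigl(Z\times\A^1,\;W\times\A^1,\;\phi_1-s\,\tilde f_k^{\,d},\;\phi_2-s\,\tilde f_k^{\,d^2},\ldots,\phi_m-s\,\tilde f_k^{\,d^m};\;g\bigr),
\]
with $\tilde f_k=f_k\circ pr_2\circ g$. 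The support $Z$ is unchanged because $\tilde f_k$ vanishes on $g^{-1}(S'\times S)$; at $s=1$ the new zero locus, restricted to $g^{-1}(S'\times V(f_1,\ldots,f_{k-1}))$, is trapped on the curve $t\mapsto(t^d,t^{d^2},\ldots,t^{d^m},t)$ inside $\A^{m+1}$, whose intersection with any hypersurface of degree $<d$ is finite by an elementary degree count (this is \cite[Lemma~7.1]{GNP}). That gives $t_d^k(c)\in Fr_m^{qf,k-1}$. A colimit argument over $d$, using that the degree filtration is exhausting on $AffSm/k$, then gives the quasi-isomorphism on $H^l$. No \'etale presentation, no Nisnevich localisation on $B$ or $W$, is needed.

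So: keep your filtration-by-$k$ outline, but replace the geometric-presentation paragraph with the explicit homotopy $h_d^k$ above and the accompanying degree filtration; that is the missing idea.
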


The sets $Fr^{qf,k}(B,\pair M,\pair B;f_1,\cdots,f_n)$ we defined allow us to split $i$ into a composition of $n$ embeddings $i_k:Fr^{qf,k-1}(B,\pair M,\pair B;f_1,\cdots,f_n) \hookrightarrow Fr^{qf,k}(B,\pair M,\pair B;f_1,\cdots,f_n).$ Hence it suffices to show that $C_* \bb Z i_k $ is a quasiisomorphism. That is proved in Section~\ref{Moving}.

\section{Filtrations $(\ZF_m^{qf,k})^{<d}(B,\pair M,\pair B;f_1, \cdots f_n)$ and $(\ZF_m^{qf,k-1})^{<d,k}(B,\pair M,\pair B;f_1, \cdots f_n)$ }

\begin{definition}
Let $B \in Sm(k)$ be affine, and $c=(Z,W,\phi,g) \in Fr_m^{qf,k}(B,\pair M,\pair B;f_1, \cdots f_n).$ The set of polynomials $F_1, \cdots F_r \in k[B] \times \A^{m+1}$ is called \textit{$c$-defining}, if for each point $b \in B$ there is $i \in \{1,\cdots,r\}$ such that:
\begin{itemize}
\item{$F_i(-,u) \neq 0 \in k(u)[\A^{n+1}]$}
\item{$(\phi,f_k \circ pr_2 \circ g) (W_u) \subseteq Z(F_i(-,u))$}
\end{itemize}
\end{definition}

\begin{lemma}
(cf.~\cite[Corollary 6.3]{GNP}) 

Suppose $B \in Sm(k)$ affine, and $c=(Z,W,\phi,g) \in Fr_m^{qf,k}(B,\pair M,\pair B;f_1, \cdots f_n).$ Then there exists a $c$-defining set $F_1, \cdots F_r.$

Moreover, if $f:B' \to B$ is a morphism of affine $k$-smooth schemes, then $f^*F_1, \cdots f^*F_r$ is a $f^*(c)$-defining set.
\end{lemma}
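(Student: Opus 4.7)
The plan is to construct $F_1, \ldots, F_r$ by Noetherian induction on $B$, using as the main input a dimension count on the morphism
\[\Psi := (p_B, \phi, f_k \circ pr_2 \circ g) : W \to B \times \A^{m+1}.\]
Since $W$ is \'etale over $B \times \A^m$, we have $\dim W = \dim B + m < \dim(B \times \A^{m+1})$, so $\Psi(W)$ is not Zariski-dense in $B \times \A^{m+1}$, and we can pick a nonzero $F_1 \in \mathcal O(B)[t_1, \ldots, t_{m+1}]$ with $F_1 \circ \Psi = 0$ in $\mathcal O(W)$. In particular, for every $b \in B$, the fiber reduction $F_1(-, b) \in k(b)[t_1, \ldots, t_{m+1}]$ vanishes on the image $\Psi(W_b)$ (including the degenerate case $F_1(-, b) = 0$).

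Let $Z_1 \subseteq B$ be the closed subscheme cut out by the $\mathcal O(B)$-coefficients of $F_1$, so that $Z_1 = \{b \in B : F_1(-, b) = 0\}$. Since $F_1 \neq 0$ and $B$ is reduced, $Z_1 \subsetneq B$. On $B \setminus Z_1$ the single polynomial $F_1$ already satisfies the $c$-defining condition. To cover $Z_1$, I pull back $c$ along the closed immersion $Z_1 \hookrightarrow B$: since the quasi-finiteness condition in the definition of $Fr_m^{qf,k}$ is preserved under base change, the restriction $c|_{Z_1}$ lies in $Fr_m^{qf,k}(Z_1, \pair M, \pair B; f_1, \ldots, f_n)$. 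By Noetherian induction on $B$, there is a $c|_{Z_1}$-defining set $F_2', \ldots, F_r' \in \mathcal O(Z_1)[t_1, \ldots, t_{m+1}]$; using affineness of $B$, lift these to $F_2, \ldots, F_r \in \mathcal O(B)[t_1, \ldots, t_{m+1}]$ along the surjection $\mathcal O(B) \twoheadrightarrow \mathcal O(Z_1)$. For $b \in Z_1$ the reduction $F_i(-, b) = F_i'(-, b)$ is nonzero and vanishes on $\Psi(W_b) = \Psi(W|_{Z_1})_b$, so the collection $F_1, F_2, \ldots, F_r$ is the desired $c$-defining set.

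The main obstacle is the subtle global requirement that every $b \in B$ admit an $F_i$ with $F_i(-, b) \neq 0$, whereas the dimension count alone only yields a single $F_1$ that is allowed to trivialise on some closed locus. Noetherian induction converts this into a terminating recursion: at each step, the ``bad locus'' $Z_1$ on which the current polynomial fails is a proper closed subscheme of $B$, and the induction ultimately descends to the empty set. Once the construction is in hand, functoriality under $f : B' \to B$ is essentially tautological: $\Psi$ commutes with base change, so $f^* F_i \circ f^*\Psi = f^*(F_i \circ \Psi) = 0$; and at a point $b' \in B'$ with image $b = f(b')$, the reduction $f^* F_i(-, b')$ is obtained from $F_i(-, b)$ via the field extension $k(b) \hookrightarrow k(b')$, hence nonzero whenever $F_i(-, b)$ is.
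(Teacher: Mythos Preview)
Your argument is correct and is essentially the content of \cite[Lemma~6.2]{GNP}, which is precisely the black box the paper invokes (applied with $Y=B$ and $\psi=(\phi,f_k\circ pr_2\circ g)$); you have simply unwound that citation into a direct dimension-count plus Noetherian induction, and your treatment of functoriality is exactly the ``obvious from the definitions'' that the paper asserts.

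One small point to tighten: your Noetherian induction passes from $B$ to the closed subscheme $Z_1$, which need not lie in $Sm(k)$, so the induction hypothesis as you phrase it (``for $B\in Sm(k)$'') does not literally apply to $Z_1$. The fix is painless: either run the induction over reduced closed subschemes of the original $B$, or note that the only properties you actually use---that $W$ is \'etale over $B\times\A^m$ (giving $\dim W\leq\dim B+m$, hence $\Psi(W)$ not dense), that $B$ is reduced (so $F_1\neq 0$ forces $Z_1\subsetneq B$), and that $B$ is affine (for the lift $\mathcal O(B)\twoheadrightarrow\mathcal O(Z_1)$)---all persist after base change to $(Z_1)_{red}$, since \'etaleness is stable under base change and \'etale over reduced is reduced.
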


\begin{proof}
We get the first statement is by applying~\cite[Lemma 6.2]{GNP} to the case $Y=B, n=0, \psi = (\phi,f_k \circ pr_2 \circ g).$

The second statement is obvious from the definitions.
\end{proof}

\begin{definition}
For $d \in \bb N,$ $(Fr_m^{qf,k})^{<d}(B,\pair M,\pair B;f_1, \cdots f_n)$ is a subset in  $Fr_m^{qf,k}(B,\pair M,\pair B;f_1, \cdots f_n)$ consisting of such correspondences $c$ wor which there exists a $c$-defining set, in which the degrees of all polynomials $F_i$ are less than $d.$ 

$(Fr_m^{qf,k-1})^{<d,k}(B,\pair M,\pair B;f_1, \cdots f_n)$ is the subset 
\[Fr_m^{qf,k-1}(B,\pair M,\pair B;f_1, \cdots f_n) \cap (Fr_m^{qf,k}(B,\pair M,\pair B;f_1, \cdots f_n))^{<d}.\]
\end{definition}

\begin{lemma}\label{l:exhausting_1}
For all $m,k\geq 0$ and $d>0,$ the following holds:
\begin{itemize}
\item[(i)]
$(Fr_m^{qf,k})^{<d}(-,\pair M,\pair B;f_1, \cdots f_n)$ is a subpresheaf on $AffSm/k$ of the presheaf $Fr_m^{qf,k}(-,\pair M,\pair B;f_1, \cdots f_n)$;
\item[(ii)]
The ascending filtration of the presheaf $Fr_m^{qf,k}(-,\pair M,\pair B;f_1, \cdots f_n)|_{AffSm/k}$  by subpresheaves $(Fr_m^{qf,k})^{<d}(-,\pair M,\pair B;f_1, \cdots f_n)$ is exhausting;
\item[(iii)]
$(Fr_m^{qf,k-1})^{<d,k}(-,\pair M,\pair B;f_1, \cdots f_n)$ is a presheaf on $AffSm/k$ of the presheaf $Fr_m^{qf,k-1}(-,\pair M,\pair B;f_1, \cdots f_n)$;
\item[(iv)]
the ascending filtration of the presheaf $Fr_m^{qf,k-1}(-,\pair M,\pair B;f_1, \cdots f_n)|_{AffSm/k}$ by subpresheaves $(Fr_m^{qf,k-1})^{<d,k}(-,\pair M,\pair B;f_1, \cdots f_n)$ is exhausting.
\end{itemize}
\end{lemma}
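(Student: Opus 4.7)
The four assertions are formal consequences of the preceding lemma, and I will prove (i) together with (iii), and (ii) together with (iv). The key observation is that the polynomials $F_i$ defining the filtration live on $B\times\A^{m+1}$ and are indexed by degree in the $\A^{m+1}$-variables only. Pullback along a morphism $f:B'\to B$ of affine $k$-smooth schemes sends $F\in k[B\times\A^{m+1}]$ to $f^*F\in k[B'\times\A^{m+1}]$ by acting only on the coefficient ring $k[B]$, and therefore leaves the degree in the $\A^{m+1}$-direction unchanged.

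For (i), I take $c\in (Fr_m^{qf,k})^{<d}(B,\pair M,\pair B;f_1,\ldots,f_n)$ with a $c$-defining set $F_1,\ldots,F_r$ all of degree less than $d$. The ``moreover'' part of the preceding lemma gives that $f^*F_1,\ldots,f^*F_r$ is an $f^*(c)$-defining set, and by the degree-preservation observation just made its members still have degrees less than $d$; hence $f^*c\in (Fr_m^{qf,k})^{<d}(B')$. Part (iii) is then immediate, since by definition $(Fr_m^{qf,k-1})^{<d,k}$ is the intersection of the two subpresheaves $Fr_m^{qf,k-1}$ and $(Fr_m^{qf,k})^{<d}$ on $AffSm/k$, hence is itself a subpresheaf.

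For (ii), given any $c\in Fr_m^{qf,k}(B,\pair M,\pair B;f_1,\ldots,f_n)$ with $B$ affine $k$-smooth, the existence clause of the preceding lemma supplies some $c$-defining set $F_1,\ldots,F_r$; setting $d:=1+\max_i\deg F_i$ gives $c\in(Fr_m^{qf,k})^{<d}(B)$, so the ascending chain of subpresheaves is exhausting. Part (iv) reduces to (ii) via the inclusion $Fr_m^{qf,k-1}\subseteq Fr_m^{qf,k}$, which holds because quasifiniteness over $B$ of the set $\{\phi=0\}\cap g^{-1}(S'\times\{f_1=0,\ldots,f_{k-1}=0\})$ automatically implies the same for its closed subset $\{\phi=0\}\cap g^{-1}(S'\times\{f_1=0,\ldots,f_k=0\})$. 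Thus any $c\in Fr_m^{qf,k-1}(B)$ already sits in $(Fr_m^{qf,k})^{<d}(B)$ for some $d$, and hence in $(Fr_m^{qf,k-1})^{<d,k}(B)$. I expect no real obstacle here: the whole lemma follows mechanically from the naturality statement of the preceding lemma once one fixes the convention that the filtering degree is measured in the $\A^{m+1}$-coordinates only, so that affine base change cannot raise it.
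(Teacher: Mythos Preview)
Your proof is correct and is exactly the argument one would expect; the paper itself gives no proof of this lemma, evidently regarding it as an immediate consequence of the preceding lemma on existence and base-change naturality of $c$-defining sets. Your key observation---that the filtering degree is taken in the $\A^{m+1}$-variables only, so pullback along $f:B'\to B$ cannot raise it---together with the inclusion $Fr_m^{qf,k-1}\subseteq Fr_m^{qf,k}$ (a closed subset of a quasifinite scheme over $B$ is again quasifinite) is all that is needed.
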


Subgroups 

\[(\ZF_m^{qf,k})^{<d}(B,\pair M,\pair B;f_1, \cdots f_n) \leq \ZF_m^{qf,k}(B,\pair M,\pair B;f_1, \cdots f_n)\]

and

\[(\ZF_m^{qf,k-1})^{<d,k}(B,\pair M,\pair B;f_1, \cdots f_n) \leq \ZF_m^{qf,k-1}(B,\pair M,\pair B;f_1, \cdots f_n)\]

are defined as expected.

\begin{corollary}\label{cor:exhausting_2}
The following equalities of presheaves on $AffSm/k$ are true:

   \[\colim_{d} (\ZF_m^{qf,k})^{<d}(-,\pair M,\pair B;f_1, \cdots f_n) =\ZF_m^{qf,k}(-,\pair M,\pair B;f_1, \cdots f_n)\]

and

   \[\colim_{d}(\ZF_m^{qf,k-1})^{<d,k}(-,\pair M,\pair B;f_1, \cdots f_n) =\ZF_m^{qf,k-1}(-,\pair M,\pair B;f_1, \cdots f_n).\]
	
\end{corollary}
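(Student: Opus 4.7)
The plan is to deduce Corollary~\ref{cor:exhausting_2} directly from Lemma~\ref{l:exhausting_1} by passing from the exhaustion at the level of pointed sets of framed correspondences to the exhaustion at the level of their associated free abelian groups. Both assertions of the corollary have the same shape, so I would treat the first and then simply point out that the second is formally identical, invoking part (iv) of the Lemma in place of part (ii). Since colimits of presheaves of abelian groups are computed sectionwise and Lemma~\ref{l:exhausting_1}(i) already guarantees that the $(\ZF_m^{qf,k})^{<d}$ form a system of subpresheaves, it suffices to check the equality of sections on each affine $B \in AffSm/k$.

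Fix such a $B$. By construction (see Definition~\ref{ZF}), the group $\ZF_m^{qf,k}(B,\pair M,\pair B;f_1, \cdots f_n)$ is the free abelian group on the basis set $F_m^{qf,k}(B,\pair M,\pair B;f_1, \cdots f_n)$ of framed correspondences with connected support lying in $Fr_m^{qf,k}$, and $(\ZF_m^{qf,k})^{<d}(B,\pair M,\pair B;f_1, \cdots f_n)$ is by definition the subgroup generated by those basis elements which additionally belong to $(Fr_m^{qf,k})^{<d}(B,\pair M,\pair B;f_1, \cdots f_n)$. Given $\xi \in \ZF_m^{qf,k}(B,\pair M,\pair B;f_1, \cdots f_n)$, write it as a finite $\bb Z$-linear combination $\xi=\sum_{i=1}^{N} n_i c_i$ with $c_i$ in the basis set. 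By Lemma~\ref{l:exhausting_1}(ii), each $c_i$ lies in $(Fr_m^{qf,k})^{<d_i}(B,\pair M,\pair B;f_1, \cdots f_n)$ for some $d_i$, and setting $d=\max_i d_i$ places every summand in $(Fr_m^{qf,k})^{<d}$, so that $\xi \in (\ZF_m^{qf,k})^{<d}(B,\pair M,\pair B;f_1, \cdots f_n)$. This proves the inclusion $\colim_d (\ZF_m^{qf,k})^{<d} \supseteq \ZF_m^{qf,k}$; the opposite inclusion is tautological.

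The second equality is proved by the same argument, substituting $(Fr_m^{qf,k-1})^{<d,k}$ for $(Fr_m^{qf,k})^{<d}$ and applying parts (iii) and (iv) of Lemma~\ref{l:exhausting_1}. There is no genuine obstacle here: the entire substantive content lies in Lemma~\ref{l:exhausting_1} itself, since once one has a filtration by subpresheaves exhausting a set-valued presheaf, the induced filtration on the associated free-abelian-group-valued presheaf exhausts it automatically because every element of a free abelian group is a finite linear combination of basis elements. The real work therefore belongs to proving the Lemma, not its corollary.
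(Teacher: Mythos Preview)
Your proposal is correct and matches the paper's treatment: the paper states the corollary immediately after Lemma~\ref{l:exhausting_1} without proof, treating it as an obvious consequence of the lemma, and your argument spells out precisely this straightforward passage from an exhausting filtration on pointed sets to one on the associated free abelian groups.
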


\section{A moving lemma} \label{Moving}

\begin{remark} \label{r:quasi-finite} 
Let $c=(Z,W,\phi;g) \in Fr_m^{qf,k}(B, \pair M \wedge \pair B).$ Denote by $\pi$ the composition $W \to \A^m_B \to B.$ Then the morphism 

\[(\pi, \phi,f_1 \circ pr_2 \circ g,\cdots,f_k\circ pr_2 \circ g)|_{g^{-1}(S' \times X}: g^{-1}(S' \times X) \to B \times \A^{m+k}\]
 
is quasifinite over $B \times 0,$ thus, by semicontinuity of fiber dimension on the source, after refining $W$ one can suppose that morphism quasifinite. In what follows we suppose it is.
\end{remark}

Let $c \in Fr_m^{qf,k}(B,\pair M,\pair B;f_1,\cdots,f_n).$ Define the homotopy $h_d^k(c) \in Fr_m (B \times \A^1,\pair M,\pair B;f_1,\cdots f_n):$

\[h_d^k(c):=(Z \times \A^1,W \times \A^1,\phi_1 -s(f_k \circ pr_2 \circ g)^d,\phi_2 -s(f_k \circ pr_2 \circ g)^{d^2}, \cdots \phi_m -s(f_k \circ pr_2 \circ g)^{d^m};g)\]

Denote $t_d^k=h_d^k \circ in_1,$ where $in_1$ is the embedding of $1$ in $\A^1.$

\begin{lemma}\label{l:homotopy_properties}
If $B\in AffSm/k,$ the following holds:
\begin{itemize}
\item [(i)] If $c \in (Fr_m^{qf,k})^{<d}(B,\pair M,\pair B;f_1, \cdots f_n)$, then $t_d^k(c) \in Fr_m^{qf,k-1}(B,\pair M,\pair B;f_1, \cdots f_n)$;
\item [(ii)] If $c \in (Fr_m^{qf,k-1})^{<d,k}(B,\pair M,\pair B;f_1, \cdots f_n)$, then $h_d^k(c) \in Fr_m^{qf,k-1}(B\times \A^1,\pair M,\pair B;f_1, \cdots f_n)$.
\end{itemize}
\end{lemma}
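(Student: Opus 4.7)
Both statements amount to verifying quasi-finiteness over the appropriate base of the closed subset
\[V=\{\phi_i-s(f_k\circ pr_2\circ g)^{d^i}=0,\ i=1,\dots,m\}\cap g^{-1}(S'\times\{f_1=\cdots=f_{k-1}=0\})\subset W\times\A^1\]
(or of its fibre at a fixed value of $s$). My plan is to combine the quasi-finiteness of the morphism $(\pi,\phi,f_1\circ pr_2\circ g,\dots,f_k\circ pr_2\circ g)|_{g^{-1}(S'\times X)}$ recorded in Remark~\ref{r:quasi-finite} with a ``$d$-adic substitution'' trick applied to a $c$-defining polynomial.

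The key computation I would carry out first is the following. Given $c\in (Fr_m^{qf,k})^{<d}(B,\pair M,\pair B;f_1,\dots,f_n)$ and $b\in B$, fix a $c$-defining polynomial $F=F_{i_b}$ in variables $(y_1,\dots,y_m,y_{m+1})$ of total degree strictly less than $d$, so that $F(\phi_1(w),\dots,\phi_m(w),(f_k\circ pr_2\circ g)(w);b)=0$ for every $w\in W_b$. The substitution $y_i\mapsto\sigma\,y^{d^i}$ for $i\leq m$ and $y_{m+1}\mapsto y$ (with a scalar $\sigma$) sends a monomial $y_1^{a_1}\cdots y_m^{a_m}y_{m+1}^{a_{m+1}}$ with $\sum a_j<d$ to $\sigma^{a_1+\cdots+a_m}\,y^{a_{m+1}+d a_1+\cdots+d^m a_m}$. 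Since $\sum a_j<d$ forces $a_j<d$ for every $j$, the exponent is a base-$d$ expansion and the assignment is injective on the monomial support of $F$. Consequently, for any $\sigma\ne 0$ the polynomial $F(\sigma y^d,\dots,\sigma y^{d^m},y;b)\in k(b)[y]$ is a nonzero polynomial in the single variable $y$.

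For part (i), I would set $\sigma=1$: on the fibre $V_b$ at $s=1$ the identity $F=0$ restricts to a nonzero polynomial equation in $y(w)=(f_k\circ pr_2\circ g)(w)$, so $y$ attains only finitely many values on $V_b$. Each such value determines $\phi_i(w)=y^{d^i}$ as well as $f_1(w)=\cdots=f_{k-1}(w)=0$ and $f_k(w)=y$, so $V_b$ is contained in finitely many fibres of the quasi-finite morphism of Remark~\ref{r:quasi-finite}, hence is finite. This yields $t_d^k(c)\in Fr_m^{qf,k-1}$. For part (ii) I would run the same argument fibrewise over $(b,s_0)\in B\times\A^1$: when $s_0\ne 0$ take $\sigma=s_0$ and conclude verbatim, using that $c\in (Fr_m^{qf,k})^{<d}$ supplies the required low-degree $F$; when $s_0=0$ the fibre of $V$ collapses to $\{\phi=0\}\cap g^{-1}(S'\times\{f_1=\cdots=f_{k-1}=0\})_b$, which is finite because the hypothesis $(Fr_m^{qf,k-1})^{<d,k}$ includes membership in $Fr_m^{qf,k-1}$.

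The delicate point, and the step to verify most carefully, is the base-$d$ injectivity: it is essential that the filtration bound $\deg F<d$ match the exponents $d,d^2,\dots,d^m$ appearing in the homotopy, so that no cancellation occurs among the substituted monomials and the surviving univariate polynomial in $y$ is actually nonzero. Once that injectivity is secured, everything else is a direct application of Remark~\ref{r:quasi-finite} together with the two hypotheses on $c$.
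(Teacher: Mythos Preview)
Your argument is correct and follows the same route as the paper: verify fibrewise finiteness by observing that the locus in question is contained in finitely many fibres of the quasi-finite map $\psi=(\pi,\phi,f_1\circ pr_2\circ g,\dots,f_k\circ pr_2\circ g)|_{g^{-1}(S'\times X)}$ from Remark~\ref{r:quasi-finite}, those fibres being singled out by the roots of the univariate polynomial $F(\sigma t^d,\dots,\sigma t^{d^m},t)$. The only cosmetic difference is that the paper phrases this geometrically, intersecting the hypersurface $Z(F(-,b))\subset\A^{m+1}$ with the curve $C_\sigma=\{(\sigma t^d,\dots,\sigma t^{d^m},t)\}$ and invoking \cite[Lemma~7.1]{GNP} for the finiteness of that intersection, whereas you supply the base-$d$ expansion argument (which is precisely the content of that lemma) inline.
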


\begin{proof}
We begin with the first statement. Let $c=(Z,W,\phi;g)\in (Fr_m^{qf,k})^{<d}(B,\pair M,\pair B;f_1, \cdots f_n)$. Let $F_1,\ldots F_r
\in k[\A^{m+1}_U]$  be a $c$-defining set with $\deg F_i<d$ for each $i=1,\ldots,r$. We need to check that $t_d^k(c)$ lies in
$Fr_m^{qf,k-1}(B,\pair M,\pair B;f_1, \cdots f_n)$. Denote for brevity $\tilde{f}_k=f_k  \circ pr_2 \circ g.$ We set

\[\begin{tikzcd}
Y \arrow[d,equals] \\
Z(\phi_1 -(\tilde{f}_k)^d,\phi_2 -(\tilde{f}_k)^{d^2}, \ldots \phi_m -(\tilde{f}_k)^{d^m},f_1 \circ pr_2 \circ g,\ldots,f_{k-1} \circ pr_2 \circ g) \cap g^{-1}(S' \times X) \arrow[d, hookrightarrow] \\
 W
\end{tikzcd}\]

 (the hooked arrow here stands for being a subset). We need to check that for each point $b\in B$ the fiber $Y(b)$ of $Y$
over $b$ is finite. Let $\theta: \A^1\to \A^{m+1}$ be the morphism sending $t$ to $(t^d,t^{d^2},...,t^{d^m},t)$.
This is a closed embedding with some image $C=\theta(\A^1)$.

By Note~\ref{r:quasi-finite}, the morphism
 
\[\psi=(\pi, \phi,f_1 \circ pr_2 \circ g,\cdots,f_k \circ pr_2 \circ g)|_{g^{-1}(S' \times X)}\]

is quasifinite. For a point $b\in B$ there is a polynomial $F$ in the $c$-defining set, such that $F(-,u)$ is nonzero, and its zeros $Z(F(-,u))$ in $\A^{m+1}_u$
contain $(\phi,f_k \circ pr_2 \circ g)(W(u)).$ The latter condition is equivalent to requiring that for the polynomial $\widetilde{F} \in k[x_1,\dots x_{m+k}],$ defined as 

\[\widetilde{F}(x_1,\dots x_{m+k})=F(x_1, \dots,x_m,x_{m+k}),\]

\[Z(\widetilde{F}) \supseteq \psi \big ( W(b) \big ).\]

It is clear that $Y(b)$ is contained in

   \[\psi^{-1}(Z(F(-,b))\cap C).\]
	
$Z(F(-,b))\cap C$ is isomorphic to the zero scheme of the polynomial $F(t^d,t^{d^2},...,t^{d^{m+n}},t)$ on the line
$\A^1$  with coordinate $t$. By~\cite[Lemma 7.1]{GNP} the set $Z(F(-,b))\cap C$ is finite, therefore so is $Y(b)$, proving the first statement.

Check the second statement. Let

 \[c=(Z,W,\phi;g) \in (Fr_m^{qf,k-1})^{<d,k}(B,\pair M,\pair B;f_1, \cdots f_n),\]

 and let $F_1,\ldots F_r \in
k[\A^{m+n+1}_U]$  be a $c$-defining set with $\deg F_i<d$ for each 
$i=1,\ldots,r$. We need to check that $h_d^k(c)$ lies in $Fr_m^{qf,k-1}(B\times \A^1,\pair M,\pair B;f_1, \cdots f_n)$. Denote for brevity $\tilde{f}_k=f_k  \circ pr_2 \circ g.$ Set

   \[\begin{tikzcd}
	Y_s \arrow[d,equals]\\
	Z(\phi_1 -s(\tilde{f}_k)^d,\phi_2 -s(\tilde{f}_k)^{d^2}, \ldots \phi_m -s(\tilde{f}_k)^{d^m},f_1,\ldots,f_{k-1}) \cap g^{-1}(S' \times X) \arrow[d, hookrightarrow] \\
	W\times \A^1, 
	\end{tikzcd}\]
	
where $s$  is the coordinate on the added $\A^1$ factor. (the hooked arrow here stands for being a subset) We need to check that for each point $v\in B\times \A^1$ the fiber $Y_s(v)$ $Y_s$ over $v$ is finite. Denote the image of the coordinate dunction $s$ in $k(v)$ by $a.$ 

Let $\theta_a: \A^1_{k(v)}\to \A^{m+1}_{k(v)}$ be the morphism sending $t$ to
$(at^d,at^{d^2},...,at^{d^m},t)$. This is a closed embedding with some image $C_a=\theta_a(\A^1)$. In particular, for $a=0$ $\theta_0: \A^1\to
\A^{m+1}$ is the morphism sending $t$ to $(0,0,...,0,t)$.
This is a closed embedding with image $C_0=\theta_0(\A^1)$.  This is the last coordinate axis $\A^1_{m+1}$ в $\A^{m+1}$.

By Note~\ref{r:quasi-finite}, the morphism

 \[\psi=(\pi, \phi,f_1 \circ pr_2 \circ g,\dots,f_k \circ pr_2 \circ g)|_{g^{-1}(S' \times X)}\]

is quasifinite. For a point $b=pr_1(v)\in B$, there is a polynomial $F \in k[x_1,\dots,x_{m+1}]$ from the $c$-defining set, such that $F(-,b)$ is nonzero and its zeros $Z(F(-,b))$ in $\A^{m+1}_b$
contain $(\phi,f_k \circ pr_2 \circ g)(W(u)).$ The latter condition is equivalent to requiring that for the polynomial $\widetilde{F} \in k[x_1,\dots x_{m+k}],$ defined as 

\[\widetilde{F}(x_1,\dots x_{m+k})=F(x_1, \dots,x_m,x_{m+k}),\]

\[Z(\widetilde{F}) \supseteq \psi \big ( W(b) \big ).\]

 For a given $0\neq a\in k(v)$ the scheme
$Y_a(b)$ is contained in the scheme

   \[\psi^{-1}(Z(F(-,b))\cap C_a).\]
	
The scheme $Z(F(-,b))\cap C_a)$ is isomorphic to the zero scheme of the polynomial $F(at^d,at^{d^2},...,at^{d^m},t)$ on the line $\A^1$ with coordinate $t$.  Therefore by~\cite[Lemma 7.1]{GNP} the set$Z(F(-,b))\cap C_a)$ is finite in this case, hence so is $Y_a(b)$.

For $a=0$ the set $Y_0(b)$ coincides with the closed subset
$Z(\phi_1,...,\phi_m,f_1,\ldots,f_{k-1}) \cap g^{-1}(S' \times X)$ of $W$. That is quasifinite over $U$,
because $c=(Z,W,\phi;g) \in (Fr_m^{qf,k-1})^{<d,k}(B,\pair M,\pair B;f_1, \cdots f_n)$, proving the second statement.
\end{proof}

\begin{proposition}\label{ZF_qf_and_ZF}
For any integers $m,k\geq 0,$ the morphism

\[C_*In^k: C_*\ZF_m^{qf,k-1}(-,\pair M,\pair B;f_1, \cdots f_n)  \to C_*\ZF_m^{qf,k}(-,\pair M,\pair B;f_1, \cdots f_n)\]

of complexes of presheaves of abelian groups is a schemewise quasiisomorphism on the category $AffSm/k$.
\end{proposition}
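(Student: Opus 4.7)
The plan is to fix $B\in AffSm/k$ and combine the homotopies $h_d^k$ of Lemma~\ref{l:homotopy_properties} with the exhausting filtrations of Corollary~\ref{cor:exhausting_2}. Before beginning, I would record an observation not appearing in Lemma~\ref{l:homotopy_properties}: for any $c=(Z,W,\phi;g)\in Fr_m^{qf,k}(B,\pair M,\pair B;f_1,\ldots,f_n)$ the whole homotopy $h_d^k(c)$ itself belongs to $Fr_m^{qf,k}(B\times\A^1,\pair M,\pair B;f_1,\ldots,f_n)$. Indeed, on $g^{-1}(S'\times V(f_1,\ldots,f_k))$ the function $\tilde f_k=f_k\circ pr_2\circ g$ vanishes, so each perturbation term $s\tilde f_k^{d^j}$ vanishes there, and the locus $\{\phi_j-s\tilde f_k^{d^j}=0\text{ for all }j\}\cap g^{-1}(S'\times V(f_1,\ldots,f_k))$ is the product of the original quasifinite-over-$B$ locus with $\A^1$, hence is quasifinite over $B\times\A^1$. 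Combined with Lemma~\ref{l:homotopy_properties}, this gives two $\A^1$-homotopies: on $(\ZF_m^{qf,k})^{<d}$, $h_d^k$ lives in $\ZF_m^{qf,k}$ and interpolates between $\id$ and $In^k\circ t_d^k$; on $(\ZF_m^{qf,k-1})^{<d,k}$, the same $h_d^k$ lives in $\ZF_m^{qf,k-1}$ (by part (ii) of the lemma) and interpolates between $\id$ and $t_d^k\circ In^k$.

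Next, I would convert these $\A^1$-homotopies of natural transformations of presheaves of abelian groups into chain homotopies on the associated Suslin complexes via the standard device: identify $\A^1\cong\Delta^1$, decompose $\Delta^n\times\Delta^1$ into $(n{+}1)$-simplices by the Eilenberg--Zilber shuffles, and form the corresponding signed sum. This yields, for every $d$, chain homotopies
\[H\colon C_n(\ZF_m^{qf,k})^{<d}(B,\ldots)\to C_{n+1}\ZF_m^{qf,k}(B,\ldots),\quad \partial H+H\partial=In^k\circ t_d^k-\id,\]
\[H'\colon C_n(\ZF_m^{qf,k-1})^{<d,k}(B,\ldots)\to C_{n+1}\ZF_m^{qf,k-1}(B,\ldots),\quad \partial H'+H'\partial=t_d^k\circ In^k-\id.\]

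Finally, I would deduce the proposition from these data using Corollary~\ref{cor:exhausting_2}. For surjectivity of $H_*(C_*In^k)$: any cycle $c\in C_*\ZF_m^{qf,k}(B,\ldots)$ lies in $C_*(\ZF_m^{qf,k})^{<d}$ for some $d$, whence $c-In^k(t_d^k(c))=-\partial H(c)$ and so $[c]=[In^k(t_d^k(c))]$ is in the image. For injectivity: given a cycle $c\in C_*\ZF_m^{qf,k-1}(B,\ldots)$ with $In^k(c)=\partial b$ in $C_*\ZF_m^{qf,k}$, choose $d$ large enough so that both $c\in C_*(\ZF_m^{qf,k-1})^{<d,k}$ and $b\in C_*(\ZF_m^{qf,k})^{<d}$; then $t_d^k(b)\in C_*\ZF_m^{qf,k-1}$ and $\partial t_d^k(b)=t_d^k(\partial b)=t_d^k(In^k(c))=c+\partial H'(c)$, so $c=\partial(t_d^k(b)-H'(c))$ in $C_*\ZF_m^{qf,k-1}$. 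The main obstacle I anticipate is the bookkeeping: $h_d^k$ depends on $d$, so one must check carefully that the Eilenberg--Zilber sums defining $H$ and $H'$ really land in the prescribed subpresheaves $\ZF_m^{qf,k}$ and $\ZF_m^{qf,k-1}$, and that the colimit over $d$ assembles everything correctly. The chain-homotopy construction itself is standard and runs in parallel to the homotopy arguments in~\cite[Section~6]{GNP}.
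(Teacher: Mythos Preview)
Your proposal is correct and follows essentially the same approach as the paper: combine the exhausting filtrations of Corollary~\ref{cor:exhausting_2} with the $\A^1$-homotopies $h_d^k$ of Lemma~\ref{l:homotopy_properties} to exhibit $t_d^k$ as a one-sided inverse to $In^k$ on cohomology. The only difference is presentational---you construct the Eilenberg--Zilber chain homotopies explicitly and run a direct chain-level injectivity argument, whereas the paper invokes homotopy invariance of Suslin-complex cohomology as a black box and argues injectivity via the filtered colimit diagram; your added observation that $h_d^k$ itself lands in $Fr_m^{qf,k}(B\times\A^1,\ldots)$ is implicit in the paper's claim that the upper triangle commutes.
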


\begin{proof}
After applying the Suslin complex $C_*$ to Corollary~\ref{cor:exhausting_2} and taking the $l$th cohomology we get a colimit diagram

\[\begin{tikzcd}
\cdots \arrow[d] & \cdots \arrow[d] \\
 H^l \l( (C_*\ZF_m^{qf,k-1})^{<d-1,k}(-,\pair M,\pair B;f_1, \cdots f_n) \arrow[d] \arrow[ddd, start anchor = west,end anchor = west,bend right] \r) \arrow[r] & H^l\l( (C_*\ZF_m^{qf,k})^{<d-1}(-,\pair M,\pair B;f_1, \cdots f_n) \r) \arrow[d] \arrow[ddd,start anchor = east, end anchor = east, bend left]  \\
 H^l \l((C_*\ZF_m^{qf,k-1})^{<d,k}(-,\pair M,\pair B;f_1, \cdots f_n) \arrow[d] \arrow[dd, start anchor = west,end anchor = west, bend right] \r)\arrow[r] & H^l \l((C_*\ZF_m^{qf,k})^{<d}(-,\pair M,\pair B;f_1, \cdots f_n) \r)\arrow[d] \arrow[dd, start anchor = east, end anchor = east, bend left]  \\
 \cdots & \cdots  \\
H^l \l(C_*\ZF_m^{qf,k-1}(-,\pair M,\pair B;f_1, \cdots f_n) \r) \arrow[r] & H^l \l( C_*\ZF_m^{qf,k}(-,\pair M,\pair B;f_1, \cdots f_n)\r) \\
\end{tikzcd}.\]

also from Lemma~\ref{l:homotopy_properties} and homotopy invariance of the cohomology of the Suslin complex, we have a commutative diagram

\[\begin{tikzcd}
H^l \l( (C_*\ZF_m^{qf,k-1})^{<d,k}(-,\pair M,\pair B;f_1, \cdots f_n) \r) \arrow[r,"H^l(C_*In_d^k)"]  \arrow[d,"H^l(C_*J_d^k)"] & H^l \l( (C_*\ZF_m^{qf,k})^{<d}(-,\pair M,\pair B;f_1, \cdots f_n) \r) \arrow[d,"H^l(C_*I_d^k)"]  \arrow[dl,"H^l(C_*T_d^k)"']  \\
      H^l \l(  C_*\ZF_m^{qf,k-1}(-,\pair M,\pair B;f_1, \cdots f_n) \r) \arrow[r,"H^l(C_*In^k)"]  & H^l \l( C_*\ZF_m^{qf,k}(-,\pair M,\pair B;f_1, \cdots f_n) \r)     
\end{tikzcd}.\]	
Since morphisms of presheaves are determined by the images of specific sections, it follows that $H^l(C_*In^k)$ is an isomorphism Indeed, each class

\[c \in H^l \l( C_*\ZF_m^{qf,k}(B,\pair M,\pair B;f_1, \cdots f_n) \r)\]

 lies in the image of some $H^l(C_*I_d^k).$ But then, by vommutativity of the diagram, it is also in the image of $H^l(C_*In^k).$ For injectivity, let 

\[c \in H^l \l(  C_*\ZF_m^{qf,k-1}(B,\pair M,\pair B;f_1, \cdots f_n) \r),\]

 and $H^l(C_*In^k)(c) = 0$. There exists 

\[\tilde{c} \in H^l \l( (C_*\ZF_m^{qf,k-1})^{<d,k}(B,\pair M,\pair B;f_1, \cdots f_n) \r),\]

 such that $c=  H^l(C_*J_d^k) (\tilde{c}).$ Since 

\[H^l(C_*I_d^k) \circ H^l(C_*In_d^k) (\tilde{c})=0,\]

 then, after passing to a sufficiently large $d,$ we may suppose that $ H^l(C_*In_d^k) (\tilde{c})=0.$ But then 

\[c= H^l(C_*T_d^k) \circ H^l(C_*In_d^k) (\tilde{c})=0.\]

Since this is true for any $l$, the argument above shows that $C_*In^k$ is a quasiisomorphism.
\end{proof}

Now we are in a position to prove Theorem~\ref{th:moving}.
\begin{proof}
We have a chain

\[\begin{tikzcd}
 C_* \ZF^{qf}(B,\pair M,\pair B) \arrow[d,equals] \arrow[ddddd, start anchor = west,end anchor = west,bend right =50,"C_* \bb Z i"']\\
C_*\ZF_m^{qf,0}(B,\pair M,\pair B;f_1, \cdots f_n) \arrow[d,"C_*In^1"] \\
C_*\ZF_m^{qf,1}(B,\pair M,\pair B;f_1, \cdots f_n) \arrow[d,"C_*In^2"]\\
\cdots \arrow[d,"C_*In^n"]\\
C_*\ZF_m^{qf,n}(B,\pair M,\pair B;f_1, \cdots f_n) \arrow[d,equals]\\
C_* \ZF(B,\pair M \wedge \pair B)
\end{tikzcd}\]

All of the morphisms $C_*In^k$ in it are quasiisomorphisms by Proposition~\ref{ZF_qf_and_ZF}, hence their composition $C_* \bb Z i$ is also a quasiisomorphism.

\end{proof}

In what follows we will need another variant of the moving lemma. To state it we inductively define another object.

\begin{definition} \label{multi-d}
For $d_1, \ldots d_k \in \bb N,$ $(Fr_m^{qf,k})^{<d_1, \ldots, <d_k}(B,\pair M,\pair B;f_1, \cdots f_n)$ is the subset in $(Fr_m^{qf,k})^{<d_k}(B,\pair M,\pair B;f_1, \cdots f_n)$ consisting of classes containing such correspondences $c,$ for which $t_{d_k}(c) \in (Fr_m^{qf,k-1})^{<d_1, \ldots, <d_{k-1}}(B,\pair M,\pair B;f_1, \cdots f_n).$
\end{definition}

For $k>1,$ $(Fr_m^{qf,k})^{<d_1, \ldots, <d_k}(B,\pair M,\pair B;f_1, \cdots f_n)$ may not embed into each other, but their union is the set $Fr_m^{qf,k}(B,\pair M,\pair B;f_1, \cdots f_n).$

~\\
If $c \in (Fr_m^{qf,k})^{<d_1, \ldots, <d_k}(B,\pair M,\pair B;f_1, \cdots f_n),$ then $t_{d_1} \circ \cdots \circ t_{d_k}(c)$ gives an element in $Fr_m^{qf,0}(B,\pair M,\pair B;f_1, \cdots f_n),$ the image of which in $Fr_m^{qf,k}(B,\pair M,\pair B;f_1, \cdots f_n)$ is connected to $c$ by a sequence of $k$ homotopies.

In particular, for $k=n$ we get the following.

\begin{lemma}\label{l:homotopy_chain_to_qf}
For each $c \in Fr_m(B,\pair M \wedge \pair B),$ there exists a $\tilde{c} \in Fr_m^{qf}(B,\pair M,\pair B),$ such that $i(\tilde{c})$ is connected to $c$ by a sequence of homotopies.
\end{lemma}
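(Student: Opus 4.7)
The plan is to apply the one-step homotopies from Lemma~\ref{l:homotopy_properties}(i) iteratively, starting at the top level $k=n$ and descending to $k=0$. At each level I exploit the exhausting property of the filtration $(Fr_m^{qf,k})^{<d}$ from Lemma~\ref{l:exhausting_1}(ii) to pick an appropriate degree bound $d_k$, and then Lemma~\ref{l:homotopy_properties}(i) guarantees that the corresponding $t_{d_k}^k$ sends my current correspondence to an element of $Fr_m^{qf,k-1}$, while the underlying $\A^1$-homotopy $h_{d_k}^k$ supplies a homotopy between them in the ambient $Fr_m$.

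Concretely, set $c_n := c \in Fr_m(B,\pair M \wedge \pair B) = Fr_m^{qf,n}(B,\pair M,\pair B;f_1,\ldots,f_n)$. By Lemma~\ref{l:exhausting_1}(ii) there exists $d_n$ with $c_n \in (Fr_m^{qf,n})^{<d_n}$; put $c_{n-1} := t_{d_n}^n(c_n) \in Fr_m^{qf,n-1}$, noting that $h_{d_n}^n(c_n)$ is a homotopy in $Fr_m(B\times\A^1,\pair M \wedge \pair B)$ connecting $c_n$ to the image of $c_{n-1}$. I then iterate: given $c_k \in Fr_m^{qf,k}$, apply Lemma~\ref{l:exhausting_1}(ii) at level $k$ to find $d_k$ with $c_k \in (Fr_m^{qf,k})^{<d_k}$ and define $c_{k-1} := t_{d_k}^k(c_k) \in Fr_m^{qf,k-1}$, with accompanying homotopy $h_{d_k}^k(c_k)$. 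After $n$ such steps I obtain $\tilde{c} := c_0 \in Fr_m^{qf,0} = Fr_m^{qf}(B,\pair M,\pair B)$, and the concatenation of $h_{d_n}^n(c_n),\, h_{d_{n-1}}^{n-1}(c_{n-1}),\, \ldots,\, h_{d_1}^1(c_1)$ is the required chain connecting $c$ to $i(\tilde{c})$.

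I do not anticipate any substantial obstacle, as every required ingredient is already in hand. In effect, the lemma amounts to the observation that the union of the multi-indexed subsets $(Fr_m^{qf,n})^{<d_1,\ldots,<d_n}$ from Definition~\ref{multi-d} exhausts all of $Fr_m^{qf,n}=Fr_m$; this follows by a direct induction on the decreasing index $k$, each induction step being precisely an application of the single-index exhaustion of Lemma~\ref{l:exhausting_1}(ii) at the current level combined with Lemma~\ref{l:homotopy_properties}(i) to produce both the next endpoint and the interpolating homotopy. The iteration terminates after exactly $n$ steps because $n$ is the number of defining functions $f_1,\ldots,f_n$ chosen to cut out $S = X - U$, and the base case $k=0$ places the final endpoint in $Fr_m^{qf} = Fr_m^{qf,0}$, which no longer depends on the choice of the $f_i$.
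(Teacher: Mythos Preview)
Your argument is correct and is essentially the same as the paper's: the paper derives the lemma immediately from Definition~\ref{multi-d} and the sentence preceding it, noting that any $c$ lies in some $(Fr_m^{qf,n})^{<d_1,\ldots,<d_n}$ and that $t_{d_1}\circ\cdots\circ t_{d_n}(c)$ is then the desired $\tilde c$, connected to $c$ by the chain of $n$ homotopies $h_{d_k}^k$. Your write-up simply unpacks this iteration explicitly, using Lemma~\ref{l:exhausting_1}(ii) and Lemma~\ref{l:homotopy_properties}(i) at each step.
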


We will need the following variation of the statement that these subset exhaust everything:

\begin{lemma} \label{l:d_for_finite_n_of_c}
For any finite set of correspondences $c_i$ from affines $A_i$ to $(X,U),$ there is a sequence $d_1, \cdots,d_n$, such that all the $c_i$ lie in $(Fr_m^{qf,n})^{<d_1, \ldots, <d_n}(B,\pair M,\pair B;f_1, \cdots f_n)$
\end{lemma}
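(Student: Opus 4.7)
The plan is to induct on $n$, exploiting the recursive structure of Definition~\ref{multi-d} together with Lemma~\ref{l:exhausting_1}(ii): for a \emph{finite} family of sections, the exhausting property of the single-index filtration lets us take a maximum to obtain a common bound, and Lemma~\ref{l:homotopy_properties}(i) then takes care of the condition on the homotopy $t_{d_k}(c)$ for free.

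For the base case $n=1$, each $c_i$ is a section of $Fr_m^{qf,1}$ over its affine $A_i$, so by Lemma~\ref{l:exhausting_1}(ii) there exists $d^{(i)}$ with $c_i \in (Fr_m^{qf,1})^{<d^{(i)}}$; setting $d_1 := \max_i d^{(i)}$ places every $c_i$ in $(Fr_m^{qf,1})^{<d_1}$. Lemma~\ref{l:homotopy_properties}(i) then produces $t_{d_1}(c_i) \in Fr_m^{qf,0}$ automatically, which is the only additional requirement at level $k=1$ in Definition~\ref{multi-d}. For the induction step, assume the claim for $n-1$. Each $c_i \in Fr_m^{qf,n}$, so Lemma~\ref{l:exhausting_1}(ii) combined with the same maximum argument yields a single $d_n$ with $c_i \in (Fr_m^{qf,n})^{<d_n}$ for every $i$. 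Lemma~\ref{l:homotopy_properties}(i) then guarantees $t_{d_n}(c_i) \in Fr_m^{qf,n-1}$, and I apply the induction hypothesis to the finite family $\{t_{d_n}(c_i)\}_i$ (with the same sources $A_i$) to obtain $d_1, \ldots, d_{n-1}$ such that $t_{d_n}(c_i) \in (Fr_m^{qf,n-1})^{<d_1, \ldots, <d_{n-1}}$ for every $i$. By Definition~\ref{multi-d}, this is precisely the statement $c_i \in (Fr_m^{qf,n})^{<d_1, \ldots, <d_n}$.

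I do not expect any serious obstacle here. The one conceptual point to keep track of is that the exponents must be selected in reverse order $d_n, d_{n-1}, \ldots, d_1$, because the homotopy $t_{d_k}$ depends on its own exponent and reshapes the correspondence before the next, smaller-indexed exponent is chosen; thus the induction must unwind from the top. Finiteness of the family $\{c_i\}$ is also essential, since the maximum of an infinite set of integers need not exist — but this is exactly the hypothesis of the lemma.
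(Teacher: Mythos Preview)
Your induction argument is correct. The paper's own proof is a one-liner that takes a different shortcut: it replaces the finite family $\{c_i : A_i \to (X,U)\}$ by the single correspondence $\coprod c_i : \coprod A_i \to (X,U)$ (a finite disjoint union of affines is affine), and then invokes the exhaustion property for that one correspondence; restricting back to each $A_i$ recovers the claim. Your approach instead unwinds the recursive Definition~\ref{multi-d} directly, choosing $d_n$ first via a maximum over the finite family, pushing everything down with $t_{d_n}$ using Lemma~\ref{l:homotopy_properties}(i), and repeating. The disjoint-union trick is slicker but hides exactly the functoriality and exhaustion facts you make explicit; in particular, your version makes visible \emph{why} the exponents must be chosen top-down, which is useful for the reader who will see these homotopies reused in Lemma~\ref{l:contractible}.
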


\begin{proof}
It is enough to choose $d_i$ for the correspondence $\coprod{c_i}:\coprod{A_i} \to (X,U).$
\end{proof}

\section{Local connectedness of the spaces $C_*Fr(-,(X,U))$}\label{s:connectedness}

Since the spaces of the type $C_*Fr(-,(X,U))$ are the $0$th spaces of spectra of our interest, we first prove that they are locally connected, which will allow us to show that the corresponding spectra are $\Omega$-spectra. Then we show that under appropriate conditions these spaces have higher connectedness, which implies the same connectedness for the corresponding spectra.

\begin{lemma}\label{l:connectedness}
For each $X\in Sm/k$ and each open $U \subseteq X,$ if $U$ intersects each connected component of $X$ in a nonempty subscheme, then the pointed simplicial presheaf 
$C_*\Fr(-,(X,U)$ is Nisnevich-locally connected on $Sm/k$
\end{lemma}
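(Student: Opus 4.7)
The plan is to show that every framed correspondence over a Nisnevich stalk is $\A^1$-homotopy equivalent to the basepoint. Let $B$ be a henselian local scheme (a Nisnevich stalk of $Sm/k$) with closed point $b$, and let $c=(Z,W,\phi;g)\in \Fr_m(B,(X,U))$. I want to produce a chain of $\A^1$-homotopies from $c$ to the basepoint $0_{(X,U)}$.

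First I would apply the moving Lemma~\ref{l:homotopy_chain_to_qf} with the trivial auxiliary pair $\pair M=(\Spec k,\emptyset)$: then $S'=\Spec k$, so the quasifiniteness condition $\{\phi=0\}\cap g^{-1}(S'\times X)$ reduces to $\{\phi=0\}$ being quasifinite over $B$. This produces a chain of $\A^1$-homotopies from $c$ to some $\tilde c=(\tilde Z,\tilde W,\tilde\phi;\tilde g)$ with $\{\tilde\phi=0\}$ quasifinite over $B$. Because $B$ is henselian local, shrinking $\tilde W$ to a Nisnevich neighborhood of the closed points of $\{\tilde\phi=0\}$ above $b$ upgrades quasifiniteness to finiteness, and $\{\tilde\phi=0\}$ then decomposes as a disjoint union $\sqcup_i V_i$ of henselian locals with closed points $v_i$. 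I split $\tilde W=\sqcup_i \tilde W_i$ correspondingly. For each $i$ with $\tilde g(v_i)\in U$, I shrink $\tilde W_i$ so that $\tilde g(\tilde W_i)\subseteq U$ (which is possible because $U$ is open and $\tilde W_i$ is henselian); then $V_i\cap \tilde g^{-1}(S)=\emptyset$ and this component contributes nothing to the support, so it can be discarded. One is left with components where $\tilde g(v_i)\in S$.

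For such a remaining component the hypothesis that $U$ meets the connected component $X_{\alpha(i)}\subseteq X$ containing $\tilde g(v_i)$ enters. Using smoothness of $X$, I would choose an étale chart $\mathcal V\to \A^N$ on $X_{\alpha(i)}$ near $\tilde g(v_i)$ with $\mathcal V\cap U\neq\emptyset$; lift $\tilde g|_{\tilde W_i}$ uniquely along $\mathcal V\to X_{\alpha(i)}\hookrightarrow X$ via the henselianness of $\tilde W_i$; and translate the lift in $\A^N$ along an $\A^1$-parametric family in a generic direction so as to carry the image of $v_i$ into $\mathcal V\cap U$. This yields a morphism $G_i:\tilde W_i\times\A^1\to X$ with $G_i|_{s=0}=\tilde g|_{\tilde W_i}$ and $G_i(v_i,1)\in U$. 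To turn this into a genuine framed $\A^1$-homotopy $B\times\A^1\to (X,U)$ — one whose support is finite, not merely quasifinite, over $B\times\A^1$ — I would couple the translation with a polynomial perturbation of $\tilde\phi$ in the style of the homotopies $h_d^k$ from Section~\ref{Moving}. The $s=1$-fiber then has $\tilde g(v_i)\in U$ and is handled by the preceding paragraph; concatenating the successive homotopies connects $c$ to the basepoint.

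The main obstacle is the construction in the third paragraph: one has to simultaneously push $\tilde g(v_i)$ from $S$ into $U$ and keep the support of the resulting homotopy finite (rather than merely quasifinite) over the parameter space $B\times\A^1$. A naïve $\A^1$-translation inside an étale chart gives only quasifiniteness; the finiteness must be recovered by combining the translation with the polynomial-perturbation trick underlying the moving lemma in Section~\ref{Moving}. The smoothness of $X$ together with the nonemptiness of $U\cap X_i$ for each irreducible component $X_i$ is what provides the geometric room needed for this construction to succeed.
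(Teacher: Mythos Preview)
Your first two paragraphs match the paper's Proof~1 exactly: apply Lemma~\ref{l:homotopy_chain_to_qf} with trivial $\pair M$ to make $\{\phi=0\}$ quasifinite over $B$, then use henselianness of $B$ to upgrade to finite and decompose $\{\phi=0\}$ into local henselian pieces.

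The third paragraph is where you diverge, and there is a genuine gap. You correctly sense that the translation step is the crux, but you misdiagnose the obstacle and propose the wrong fix. Finiteness of the support over $B\times\A^1$ is \emph{not} the difficulty: once $\{\phi=0\}$ is finite over $B$, any closed subset of $\{\phi=0\}\times\A^1$ is automatically finite over $B\times\A^1$. The $h_d^k$-perturbations from Section~\ref{Moving} are designed for a different problem (stepping down the number of defining equations in a quasifiniteness condition) and do nothing for you here. Your plan to first produce a map $G_i:\tilde W_i\times\A^1\to X$ by lifting along the \'etale chart also runs into trouble: $\tilde W_i$ is not henselian, and the lift along $e$ need not extend over all of $\tilde W_i\times\A^1$.

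The paper's mechanism is different. After choosing an \'etale chart $e:\overset{\circ}X\to\A^d$ near $x=g(z)$, the hypothesis $U\cap X_i\neq\emptyset$ forces $e(S\cap\overset{\circ}X)$ into a proper closed $L\subsetneq\A^d$. A Noether-normalisation step then chooses the direction of translation so that the projection $\A^d\to\A^{d-1}$ forgetting that coordinate is \emph{finite} on $L$. This makes $Z'_i=\{(y,s)\in Y_i\times\A^1: e(g(y))+(0,\dots,0,s)\in L\}$ finite over $Y_i$, hence finite over $B$ itself (not merely over $B\times\A^1$). Finiteness over $B$ is what lets $Z'_i$ decompose into local henselian schemes; the component $Z^0_i$ containing $Z_i\times 0$ then has its closed point over $s=0$, so its fibre over $s=1$ is automatically empty --- no iteration or discarding is needed. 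Only at this point does the paper lift back from $\A^d$ to $X$, on a small neighbourhood of the local henselian $Z^0_i$, where the lift along the Nisnevich chart $e$ is guaranteed to exist. The missing idea in your sketch is precisely this Noether-normalisation choice of direction; it replaces your proposed perturbation of $\phi$ entirely.
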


We will need the following lemmas. 

\begin{lemma}\label{l:separable}
(see~\cite[Corollary 16.17(b)]{E})
Let $k$ be a perfect field. Then any finitely generated extension of $k$ can be written as a purely transcendental extension followed by a finite separable extension.
\end{lemma}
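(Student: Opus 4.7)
The statement is the classical existence of a separating transcendence basis for a finitely generated extension of a perfect field, so the cleanest route is to invoke the cited reference directly; a self-contained argument would proceed as follows.

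Write $K = k(x_1, \dots, x_r)$ and let $n$ denote the transcendence degree of $K/k$. I would induct on the number of generators $r$. The base case $r = n$ is immediate: the $x_i$ are algebraically independent and $K$ itself is already purely transcendental over $k$, with the trailing finite separable extension being trivial.

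For the inductive step $r > n$, there is a nontrivial polynomial relation among the $x_i$, so I choose an irreducible $f \in k[T_1, \dots, T_r]$ with $f(x_1, \dots, x_r) = 0$. The key point, and the only place where perfectness is used, is the claim that some partial derivative $\partial f / \partial T_i$ is nonzero. In characteristic zero this is automatic. In characteristic $p > 0$, if every partial vanished then $f$ would lie in $k[T_1^p, \dots, T_r^p]$; perfectness of $k$ would then allow me to extract $p$-th roots of the coefficients, writing $f = g^p$ and contradicting irreducibility. After relabeling so that $\partial f / \partial T_r \neq 0$, it follows that $x_r$ is separable algebraic over $k(x_1, \dots, x_{r-1})$.

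Finally, I apply the inductive hypothesis to the subfield $k(x_1, \dots, x_{r-1})$, which has one fewer generator, to obtain a purely transcendental intermediate field $k(t_1, \dots, t_m)$ with $k(x_1, \dots, x_{r-1})/k(t_1, \dots, t_m)$ finite separable. Adjoining $x_r$, which is separable algebraic over $k(x_1, \dots, x_{r-1})$ and hence over $k(t_1, \dots, t_m)$, keeps the top finite separable, yielding the required decomposition $k \subseteq k(t_1, \dots, t_m) \subseteq K$. The sole genuine obstacle is the characteristic-$p$ descent handled by perfectness; every other step is a routine piece of transcendence theory.
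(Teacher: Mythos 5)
The paper itself offers no proof of this lemma—it is quoted directly from Eisenbud—so what is at issue is only your self-contained argument. Its skeleton is the standard induction, but the key step is stated in a form that is actually false: the inference ``after relabeling so that $\partial f/\partial T_r \neq 0$, it follows that $x_r$ is separable algebraic over $k(x_1,\dots,x_{r-1})$'' does not hold for an \emph{arbitrary} irreducible relation $f$. Concretely, take $K=k(t)$ with generators $x_1=t$, $x_2=0$, $x_3=0$; the ideal of relations is $(T_2,T_3)$, and $f=T_3+T_1T_2$ is irreducible, vanishes at $(x_1,x_2,x_3)$, and has $\partial f/\partial T_1=T_2\neq 0$, yet $x_1=t$ is transcendental over $k(x_2,x_3)=k$. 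The trouble is twofold: the specialization $f(x_1,\dots,x_{r-1},T_r)$ may vanish identically, so $f$ need not witness any algebraic dependence of $x_r$ on the remaining generators; and even when it does not vanish, a nonzero formal derivative does not by itself give separability of $x_r$—in characteristic $p$ a purely inseparable element $u$ with minimal polynomial $T^p-c$ is a root of $g(T)=T^{p+1}-cT$, whose derivative $T^p-c$ is nonzero.

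Either of the standard repairs completes your argument. (i) Choose $f$ of minimal total degree in the prime ideal $\mathfrak p$ of all relations among $x_1,\dots,x_r$. Then for an index $i$ with $\partial f/\partial T_i\neq 0$ (which exists by your perfectness argument), the partial derivative is a nonzero polynomial of smaller degree, hence not in $\mathfrak p$, so its value at $(x_1,\dots,x_r)$ is nonzero; consequently $g(T)=f(x_1,\dots,T,\dots,x_r)$ is nonzero and has $x_i$ as a \emph{simple} root, which forces the minimal polynomial of $x_i$ over the field generated by the other $r-1$ generators to be separable. (ii) Alternatively, first reorder so that $x_1,\dots,x_n$ is a transcendence basis chosen among the generators and take the irreducible relation $f$ satisfied by $x_1,\dots,x_n,x_{n+1}$; the relation ideal in these $n+1$ variables is a height-one prime of a UFD, hence equals $(f)$, and then no specialization of the relevant coefficients or of $\partial f/\partial T_i$ can vanish, since that would exhibit elements of $(f)$ of too small degree in $T_i$. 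With either fix, your induction on the number of generators, the $p$-th-root extraction using perfectness, and the final assembly via transitivity of finite separable extensions all go through exactly as you wrote them.
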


\begin{lemma}\label{l:incl_field}
Let $R$ be an equicharacteristic local henselian ring with a residue field finitely generated over a perfect field $k$. Then the canonical epimorphism $R \twoheadrightarrow R/m$ splits.
\end{lemma}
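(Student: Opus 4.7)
Decompose $K:=R/m$ via Lemma~\ref{l:separable} as $K=F[\gamma]$ with $F=k(t_1,\ldots,t_n)$ purely transcendental over $k$ and $\gamma$ a primitive element whose minimal polynomial $p(x)\in F[x]$ is separable, then build a splitting by lifting $F$ and $\gamma$ to $R$ in turn. In the intended setting $R$ is a $k$-algebra (a henselization of a local ring on a smooth $k$-scheme), so the inclusion $k\hookrightarrow R$ lifting $k\hookrightarrow K$ is automatic; the equicharacteristic hypothesis is exactly what is needed to guarantee such a lift in general.

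To upgrade $k\hookrightarrow R$ to $F\hookrightarrow R$, choose any lifts $\tilde t_i\in R$ of the $t_i\in K$. Any nonzero $q\in k[x_1,\ldots,x_n]$ reduces to $q(t_1,\ldots,t_n)\neq 0$ in $K$ by algebraic independence of the $t_i$ over $k$; hence $q(\tilde t_1,\ldots,\tilde t_n)\notin m$ is a unit in the local ring $R$. Therefore the map $k[x_1,\ldots,x_n]\to R$ sending $x_i\mapsto\tilde t_i$ extends uniquely to an injection $\iota:F\hookrightarrow R$, still compatible with reduction modulo $m$.

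Next, transport $p$ into $R[x]$ via $\iota$ and pick any lift $\tilde\gamma\in R$ of $\gamma$. Then $p(\tilde\gamma)\equiv p(\gamma)=0\pmod{m}$, while $p'(\tilde\gamma)\equiv p'(\gamma)\neq 0\pmod{m}$ by separability of $\gamma$ over $F$. The henselian hypothesis yields a unique $\gamma^{*}\in R$ with $p(\gamma^{*})=0$ and $\gamma^{*}\equiv\gamma\pmod{m}$. Since $K\cong F[x]/(p(x))$, the assignments $\iota$ and $\gamma\mapsto\gamma^{*}$ assemble into a ring map $\sigma:K\to R$; by construction the composite $K\xrightarrow{\sigma}R\twoheadrightarrow K$ fixes $F$ and sends $\gamma\mapsto\gamma$, so it is the identity on $K$, and $\sigma$ splits the projection $R\twoheadrightarrow K$.

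The crux is the separability of $\gamma$ over $F$ on which Hensel's lemma hinges; this is exactly what perfectness of $k$ delivers via Lemma~\ref{l:separable}. In characteristic $p$, purely inseparable pieces of a residue extension genuinely cannot be lifted via Hensel (the derivative of $x^{p^{r}}-a$ vanishes identically), so absent the perfectness hypothesis on $k$ one would need a substantially different argument; everything else in the proof is formal, using only that $R$ is local (to turn nonzero reductions into units) and the universal property of $F[x]/(p(x))$.
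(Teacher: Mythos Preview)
Your proof is correct and follows essentially the same approach as the paper: decompose $R/m$ via Lemma~\ref{l:separable} as a purely transcendental extension followed by a finite separable one, lift the transcendence basis using locality of $R$, then lift the separable part using the henselian property. The paper's version is terser (it does not isolate a primitive element or spell out the Hensel step), but the underlying argument is identical.
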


\begin{proof}
By definition, the field $k$ is embedded into $R.$ By the previous Lemma~\ref{l:separable}, $R/m$ can be written as a finite separable extension of some purely transcendental extension $k(\{x_i\}).$ We can find a family $\{\tilde{x_i}\}$ of preimages of this transcendental basis. By virtue of the ring being local, this choice gives an embedding $k(\{\tilde{x_i}\}) \hookrightarrow R.$ Next, using the separability of the remaining extension and the definition of a henselian ring, we extend this embedding to the whole of the field $R/m.$ 
\end{proof}

We also use the following lemma in order to avoid constantly shrinking the source of framed correspondences when proving local properties.

\begin{lemma}\label{l:Fr-colim}
(cf.~\cite[Lemma 2.23]{GPHoInv} )

Let $Y$ be an affine $k$-smooth variety, let $y \in Y$ be a point (not necessarily closed), and let $(X,U) \in SmOp(Fr_0(k))$ be a pair. Let $J$ enumerate the family of all possible affine \'etale neghbourhoods $Y_i$ of the point $y$ in $Y$ ($i \in J$), up to isomorphism. 

Then

\[
Fr_m(Y^h_y,(X,U)) = \colim \limits_{i \in J} Fr_m(Y_i,(X,U)). 
\]

The left-hand side of the equality is understood as the same geometric data as normally used for smooth schemes (see Definition~\ref{d:Fr-pair}, note the generality)
 
\end{lemma}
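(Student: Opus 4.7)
The plan is to adapt the argument of~\cite[Lemma 2.23]{GPHoInv} (where the target is a smooth variety rather than a pair) to the pair case, exploiting the fact that all data in the definition of a framed correspondence from $Y$ to $(X,U)$ are finitely presented. The henselization $Y^h_y$ is the filtered limit of the affine \'etale neighbourhoods $Y_i$ of $y$ in $Y$, hence its structure sheaf is the filtered colimit of the $\cc{O}(Y_i)$. The transition morphisms $Y^h_y \to Y_i$ induce compatible pull-back maps $Fr_m(Y_i,(X,U)) \to Fr_m(Y^h_y,(X,U))$, and thus a natural map
\[\Phi: \colim_{i\in J} Fr_m(Y_i,(X,U)) \to Fr_m(Y^h_y,(X,U)).\]
I would prove that $\Phi$ is both surjective and injective.

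For surjectivity, start with an explicit representative $c=(Z,W,\phi_1,\ldots,\phi_m;g)$ of an element of $Fr_m(Y^h_y,(X,U))$ in the sense of Definition~\ref{d:Fr-pair}. The scheme $Z \subset Y^h_y\times \A^m$ is closed and finite over $Y^h_y$ (hence of finite presentation), $W$ is an \'etale neighbourhood of $Z$ in $Y^h_y \times \A^m$ (also finitely presented, since one can present it by finitely many equations after shrinking around $Z$), the $\phi_j$ are regular functions on $W$, and $g:W\to X$ is a morphism. Standard limit-descent for finitely presented schemes and morphisms (as in EGA IV.8) produces some index $i \in J$ and data $(Z_i,W_i,\phi_{i,1},\ldots,\phi_{i,m};g_i)$ over $Y_i$ whose pull-back to $Y^h_y$ recovers $(Z,W,\phi,g)$. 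The property that $W_i \to Y_i \times \A^m$ is \'etale and that $Z_i$ is finite over $Y_i$ are open-and-closed constructible conditions, which one can arrange, possibly after shrinking $Y_i$ (equivalently, passing to a further \'etale neighbourhood still indexed in $J$) and excising points of $Z_i$ away from the fibre over $y$. The key additional condition for the pair case, $Z_i = Z(\phi_{i,1},\ldots,\phi_{i,m}) \cap g_i^{-1}(X-U)$ set-theoretically, is a closed condition in $W_i$ that holds on the pull-back to $W$; shrinking $W_i$ to the \'etale neighbourhood of $Z_i$ where the set-theoretic equality holds preserves $Z_i$ and gives a representative pulling back to $c$ up to equivalence.

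For injectivity, suppose $c_i=(Z_i,W_i,\phi_i;g_i)$ and $c_i'=(Z_i',W_i',\phi_i';g_i')$ are two explicit correspondences from $Y_i$ that become equivalent on pulling back to $Y^h_y$. By Definition~\ref{d:Fr-pair}(II), this equivalence is witnessed by an \'etale neighbourhood $W''$ of $Z=Z'$ in $W_i\times_{\A^m_{Y^h_y}} W_i'$ on which the pulled-back $\phi$'s coincide and the pulled-back $g$'s coincide. Again by finite presentation, $W''$ together with these coincidences descends to an \'etale neighbourhood $W''_j$ of $Z_j=Z_j'$ in $W_j\times_{\A^m_{Y_j}}W_j'$ over some common refinement $Y_j$ of $Y_i$ in $J$; this exhibits $c_i$ and $c_i'$ as equivalent already at stage $Y_j$, hence as representing the same element of the colimit.

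The main obstacle is keeping track of the pair condition under the descent, namely the equality $Z = Z(\phi)\cap g^{-1}(X-U)$, because $X-U$ is a closed but not necessarily smooth subscheme. Since this is a set-theoretic closed condition of finite presentation, the loci where it holds descend to some $Y_i$ after a possible further \'etale shrinking around $y$; all other parts of the argument are direct transcriptions of~\cite[Lemma 2.23]{GPHoInv}.
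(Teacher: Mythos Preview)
Your proposal is correct and follows essentially the same route as the paper: both arguments spread out the finitely presented data defining a framed correspondence from $Y^h_y$ to some \'etale neighbourhood $Y_i$, and descend the equivalence witness for injectivity. The only real difference is packaging: you invoke the EGA~IV.8 limit machinery wholesale, whereas the paper spells out each step by hand (using faithful flatness of $Y^h_y$ over $Y_{i,y}$ and Noetherianness to propagate equalities from the stalk to a Zariski neighbourhood), and in particular handles the support condition $Z_i = Z(\phi_i)\cap g_i^{-1}(S)$ by comparing stalks at $y$ and then passing to a smaller $Y_i$ rather than shrinking $W_i$.
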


\begin{proof}[Proof of Lemma~\ref{l:Fr-colim}]
We first prove injectivity. Denote the canonical morphism $Y^h_y \to Y_i$ by $\hat{e}_i,$ and the corresponding inverse image morphism to $Y^h_y$ from $Y_i$ on framed correspondences by $\hat{e}_i^*.$

Let $i \in I,$ $c,c' \in Fr_m(Y_i,(X,U))$ be such that $\hat{e}_i(c_1)=\hat{e}_i(c_2).$ Let $c=(Z,W,\phi,g), c'=(Z',W',\phi',g').$ Consider the inverse images $(Z_y,W_y,\phi_y,g_y) $ and $(Z'_y,W'_y,\phi'_y,g'_y)$ of these correspondences to the local scheme $Y_{i,y}.$ Since $Y^h_y$ is faithfully flat over $Y_{i,y},$ and $\hat{e}_i(c_1)=\hat{e}_i(c_2),$ we have that $Z_y=Z'_y.$ If the ideals defining $Z$ and $Z'$ in $\A^m_{Y_i}$ are $I$ and $I'$, then $((I+I')/I)_y = 0,$ and $((I+I')/I')_y = 0.$ Since $Y$ is Noetherian, these equalities are already true after passing to some Zariski neighbourhood of the point $y$ in $Y.$ This, increasing $i$ to $i_1,$ we can assume that $Z_{i_1}=Z'_{i_1}.$  

Passing to a fibered product of $W_{i_1}$ and $W'_{i_1},$ we may consider them equal. Increasing $i_1$ to $i_2$, we can assume that $W_{i_2}$ has no connected components with an empty stalk at the point $y.$ Then the map $Hom(W_{i_2},\A^m \times X) \to Hom(W_{i_2} \times_{Y_i}Y^h_y,\A^m \times X)$ is injective. Thus, it follows from $\hat{e}_i(c_1)=\hat{e}_i(c_2)$ that $\phi_{i_2}=\phi'_{i_2},g_{i_2}=g'_{i_2}.$ The two correspondences $c$ and $c'$ have become equal after restricting to $Y_{i_2},$ hence they have the same image in the colimit (the right-hand side of the desired equality).

We now show surjectivity. Let $c=(Z,W,\phi,g) \in Fr_m(Y^h_y,(X,U)).$ Let $I=(a_1, \cdots,a_k)$ be the ideal defining $Z$ in $\A^m_{Y^h_y}.$ It is finitely generated, hence its generators are all the inverse images of elements of one $k[\A^m_{Y_i}].$ Suppose that those elements generate the ideal $I_i$ there. 

Again because $Y^h_y$ is faithfully flat over $Y_{i,y},$ we have that $V(I_i)_y$ is finite over $Y_{i,y}.$ Then, after passing to some Zariski neighbourhood (increasing $i$ to $i_2$) we can assume that $Z_{i_2}=V(I_{i_2})$ is finite over $Y_{i_2}$. Indeed, it is enough to choose such a neighbourhood over which all of the finite set of generators of $k[\A^m_{Y_{i_2}}]/I_{i_2}$ as a $k[Y_{i_2}]$-algebra are integral elements. For this it is sufficient to take the neighbourhood $Y_{i_1}$ over which all of the integral dependence relations (over $Y_{i,y}$) are defined. We get polynomials with leading coefficient $1$ over $Y_{i_1}.$ Evaluating them at the generators gives elements of $k[\A^m \times Y_{i_1}]$ going to $0$ in the stalk $k[\A^m \times Y_y]$. Hence after another increase of $i_1$ to $i_2$ we can suppose them zero. Thus, we have a finite $Z_{i_2}.$

Next we find some $W_{i_3}$ of finite type such that $W_{i_3} \times_{Y_{i_3}} Y^h_y = W.$ (we can arbitrarily extend the generators and relations giving $W$ to some neighbourhood). The section $Z_{i_3} \to W_{i_3}$ is defined on the stalk, thus it can be extended to some $Y_{i_4},$ after increasing $i_3$ to $i_4$. Again by virtue of strict henselisation being faithfully flat, $W_{i_4,y}$ is etale over $\A^m \times Y_{i_4,y}.$ Since the property of a morphism being \'etale is local in the source, the morphism $W_{i_4} \to \A^m \times Y_{i_4}$ is \'etale after restricting to some $W'_{i_4}, W'_{i_4} \supset Z_y.$ Passing from $Y_{i_4}$ to $Y_{i_5}=Y_{i_4}-\pi(Z_{i_4}-W'_{i_4}),$  we get an \'etale neighbourhood $W'_{i_5}$ of the closed subset $Z_{i_5}$ in $\A^m \times Y_{i_5}.$  

Next, the morphism $(\phi_{i_6},g_{i_6}):W_{i_6} \to \A^m \times X$ can be taken as an arbitrary extension of the morphism $(\phi,g)$ defined on $W$. That is defined on some smaller neighbourhood $Y_{i_6}$. The subset $\{\phi_{i_6}=0\} \cap g_{i_6}^{-1}(S)$ may not be equal to $Z_{i_6},$ but (again by faithful flatness) their stalks in $y$ are equal, therefore after increasing $i_6$ to $i_7$ they are equal. Thus we have a correspondence $c_{i_7}=(Z_{i_7},W_{i_7},\phi_{i_7},g_{i_7}) \in Fr_m(Y_{i_7},(X,U)),$ such that $\hat{e}_{i_7}(c_{i_7})=c.$
 
\end{proof}

We give one proof of Lemma~\ref{l:connectedness} in the general case, and another one for quasiprojective $X$.

Proof 1 is more elementary and provides a rather explicit construction of the final homotopy. It also works in the general case. Futhermore, it is akin to the proof of the Proposition~\ref{p:connectedness} below, which might make comprehending these proofs together aneasier task for the reader.

Proof 2 is more concise and refers to other results. As given here, it requires $X$ to be quasiprojective, although it can be modified to cover the general case. Its essence is a corollary of the theory of almost elementary fibrations (a generalisation of Artin neighbourhoods) and nice triples; which is developed in the paper~\cite{PSV}. For a technically sophisticated reader this proof might be the easier one of the two. The case of a quasiprojective $X$ covers many interesting examples, and may be sufficient for many readers.

\begin{proof}[Proof 1]
Let $B$ be a regular local Henselian scheme $c=(Z,W,\phi,g) \in Fr_m(B,(X,U)).$ We show that $c$ is connected by a sequence of homotopies to the zero framed correspondence $\emptyset.$

By Lemma~\ref{l:homotopy_chain_to_qf} applied to the case $Q=(pt,\emptyset), P=(X,U),$ we can assume that $c \in Fr_m^{qf}(B,(pt,\emptyset),(X,U).$ This implies that $\phi^{-1}(0)$ is quasifinite over $B.$

Since $B$ is a local henselian scheme, by~\cite[I.4.2]{Mil}, $Y=\phi^{-1}(0)$ splits into a disjoint union $Y=Y_0 \coprod Y_1 \coprod \cdots \coprod Y_r,$ where the fiber $Y_0$ over the closed point is empty, and $Y_i$ are finite and connected. Since $Z$ is finite over $B,$ all of its components have a nonempty fiber over the closed point, and therefore are not contained in $Y_0.$ Refining $W$ by subtracting the closed subscheme $Y_0,$ we may consider $\phi^{-1}(0)$ finite over $B.$

We now construct ahomotopy for one connected component $Z_i.$ The complete homotopy will be assembled from these homotopies. 

Suppose $Y_i$ is the component containing $Z_i.$ We subtract from $W$ all the other $Y_j,$ and get the neighbourhood $W_i.$ 

Let $z$ be the closed point of $Z_i,$ $x=g(z).$ Since $Z_i$ is a local henselian scheme, using Lemma~\ref{l:incl_field}, we can assume that a morphism to $x$ is defined on $Z_i.$ Similarly, after refining $W_i,$ we may assume that it is defined on $W_i$ as a morphism $lift:W_i \to x.$ Thus the framed correspondence $c$ comes from some framed correspondence $\tilde{c} \in Fr_m(B,(X_{k(x)},U_{k(x)})) = Fr_m(B,X_{k(x)}/U_{k(x)}).$ At the same time, the point $x$ of the $k(x)$-variety $X_{k(x)}$ is ratonal. Let $d=dim_x(X),$ $a_1, \ldots a_d$ be local parameters in the point $x$ on $X_{k(x)}.$ It gives an \'etale morphism $e:\overset{\circ}X \to \A^d_{k(x)}$ from some neighbourhood  $\overset{\circ}X$ of the point $x.$ Refining $\overset{\circ}X,$ we may consider $\overset{\circ}X$ a Nisnevich neighbourhood of the point $0 \in \A^d_{k(x)}.$ Since $U_{k(x)},$ by the conditions, intersects $\overset{\circ}X,$ $dim(S_{k(x)} \cap U_{k(x)}) < d.$ We denote $\overset{\circ}S=S \cap \overset{\circ}X.$
It follows from Chevalley's theorem that the set-theoretic image $\overset{\circ}S$ в $\A^d_{k(x)}$ is contained in a proper closed subset $L \subset \A^d_{k(x)}.$ 

 The following argument, making up a step of the Noether normalisation lemma for an infinite field (with linear projections), shows that after change of basis we may assume that the projection $p_{1,\cdots,d-1}: \A^d_{k(x)} \to \A^{d-1}_{k(x)}$ onto the first $d-1$ coordinates is finite in restriction to $L,$ which, in turn, means that the restriction of the morphism $p_{1,\cdots,d-1} \times id :\A^d_{k(x)} \times \A^1 \to \A^{d-1}_{k(x)} \times \A^1$ to the subscheme $L \times \A^1)$ is finite.

\begin{lemma}
Let $k$ be an infinite field. Let $L \subsetneq \A^d_k$ be a proper closed subset. Then after a linear change of basis the projection $p_{1,\cdots,d-1}: \A^d_{k} \to \A^{d-1}_{k}$  onto the first $d-1$ coordinates is finite in restriction to $L.$
\end{lemma}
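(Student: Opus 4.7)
The plan is to reduce to the case where $L$ is a hypersurface and then apply a standard linear change of coordinates argument, using the infinitude of $k$. First I would observe that, since $L \subsetneq \A^d_k$ is a proper closed subset, its ideal contains some nonzero polynomial $f \in k[x_1, \ldots, x_d]$, and $L \subseteq V(f)$. Because the restriction of a finite morphism to a closed subscheme is still finite, it suffices to produce a linear change of coordinates under which the projection $V(f) \to \A^{d-1}_k$ onto the first $d-1$ coordinates becomes finite.

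Let $e = \deg f$ and write $f = F + g$, where $F$ is the homogeneous part of $f$ of degree $e$ and $\deg g < e$. Since $F$ is a nonzero polynomial in $d$ variables and $k$ is infinite, a standard induction on the number of variables shows that the set of zeros of $F$ in $k^d$ is a proper subset; in particular there exist $a_1, \ldots, a_{d-1} \in k$ such that $F(a_1, \ldots, a_{d-1}, 1) \neq 0$. I then perform the linear substitution $x_i \mapsto x_i - a_i x_d$ for $1 \leq i \leq d-1$ and $x_d \mapsto x_d$. In the new coordinates, the coefficient of $x_d^e$ in $f$ equals $F(a_1, \ldots, a_{d-1}, 1) \in k^{\times}$, and after rescaling $f$ is monic in $x_d$ of degree $e$.

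Consequently, $k[x_1, \ldots, x_d]/(f)$ is generated as a $k[x_1, \ldots, x_{d-1}]$-module by $1, x_d, \ldots, x_d^{e-1}$, so the induced map $V(f) \to \A^{d-1}_k$ is a finite morphism. Restricting to the closed subscheme $L \subseteq V(f)$ gives the desired finite projection $L \to \A^{d-1}_k$.

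There is no real obstacle here; the only essential input is the infinitude of $k$, which is exactly what guarantees the existence of a point $(a_1, \ldots, a_{d-1}, 1) \in k^d$ off the zero locus of $F$. (Over a finite field the same conclusion can fail and one must instead allow more general, non-linear changes of coordinates, as in Nagata's version of Noether normalisation.)
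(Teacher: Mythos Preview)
Your proof is correct and follows essentially the same approach as the paper: pick a nonzero $f \in I(L)$, use the infinitude of $k$ to find a point where the top homogeneous part of $f$ does not vanish, and perform the corresponding linear substitution so that $f$ becomes monic in one variable, making $V(f)$ (hence $L$) finite over $\A^{d-1}_k$. The only cosmetic difference is that the paper normalises $a_1=1$ and makes $x_1$ the integral variable, whereas you normalise the last coordinate; up to a permutation of coordinates these are the same argument.
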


\begin{proof}
Let $F$ be a nonzero polynomial in $I(L)$ of degree $N$, $F_N$ be its leading homogeneous component. Since it is nonzero and the field is infinite, there is a point $a=[a_1,\dots,a_d] \in \bb{P}^{d-1}(k),$ such that $F_N(a) \neq 0.$ Up to reordering the coordinates and rescaling, we may assume that $a_1=1.$ Then if $\widetilde{F} (x_1,x_2, \dots,x_d)=F(x_1,x_2+a_2\cdot x_1, \dots,x_d+a_d \cdot x_1),$ we have $\widetilde{F}_N(1,0 \dots,0) \in k^*,$ and the leading coefficient of the polynomial $\widetilde{F}$ as a polynoial in $x_1$ lies in $k^*.$ Therefore, $x_1$ is intergral over $k[x_2+a_2 \cdot x_1, \dots, x_d + a_d \cdot x_1],$ and under an appropriate coordinate change the projection from $V(F),$ and, a fortiori, from $L$ turns out to be finite.   
\end{proof}

Define the morphism $\tilde{g}_A:W_i \times \A^1 \to \A^d_{k(x)}=\A^d \times x$ as $ e \circ g + ((0,0,\cdots,0,s),lift \circ pr_1),$ where $s$ is the homotopy coordinate. (refine $W_i$ in advance, so that $g(W_i) \subseteq \overset{\circ}X.$)

Let $Z'_i=\l(\tilde{g}_A|_{Y_i \times \A^1} \r) ^{-1} (L) \subset Y_i \times \A^1.$  Then  $Z'_i \subset Y_i \times L \times A^1$ is given by the equation $e \circ (g,lift)|_{Y_i} (y) + (0,0,\ldots,0,s) = l$ on triples $(y,l,s).$ Therefore  $Z'_i=\l(Y_i  \times_{\A^d} (L \times \A^1)\r)^{red}.$ Here $L \times \A^1 \to \A^d_{k(x)}=\A^d \times k(x)$ is given by $in \circ pr_1 -((0,0,\ldots,0,s),can \circ pr_1),$ where $can$ denotes the canonical morphism $L \to x.$  But that morphism coincides with the restriction of the morphism $p_{1,\cdots,d-1} \times id :\A^d_{k(x)} \times \A^1 \to \A^{d-1}_{k(x)} \times \A^1$ onto the subscheme $L \times \A^1),$ therefore it is finite by choice of basis above. Therefore, $Z'_i$ is finite over $Y_i,$ hence over $B.$ Note also that it contains $Z_i \times 0$

Since $Y_i$ is a local henselian scheme, $Z'_i$ consists of several connected component, each of them being a local henselian scheme. Choose among them the one containing $Z_i \times 0,$ and denote it by $Z^0_i$ Denoting by $W^0_i \subseteq W \times \A^1$ the complement to the rest of the components of $Z'_i,$ we have a closed embedding $Z^0_i \subseteq W^0_i.$ Since $e$ is \'etale, and the point $0 \in \A^d$ has a distinguished lifting $x \in \overset{\circ}X,$ some neighbourhood $\widetilde{W}_i$ of the point $(z,0) \in W^0_i$ has the property that the morphism $\tilde{g}_A,$ when restricted to it, lifts to a morphism $\tilde{g}:\widetilde{W}_i \to \overset{\circ}X$ sending $(z,0)$ to $x.$ Since $Z^0_i$ is local and henselian, it lifts into $\widetilde{W}_i,$ turning into its closed subscheme (after removing from $\widetilde{W}$ the extra components of the preimage, we may assume that this lifting is the whole of the preimage of $Z^0$). We will denote the restriction of the functions $\phi$ to any neighbourhood by the same letter, since at this stage we do not change them in any substantial way. Finally, we denote $\l(\tilde{g}|_{Z^0_i}\r)^{-1} (\overset{\circ}S)$ by $\widetilde{Z}_i.$ 

We are now under the following conditions:
\begin{itemize}
\item[$\bullet$]{A closed point $(z,0) \in \A^m \times B \times \A^1$}
\item[$\bullet$]{An \'etale neighbourhood $\widetilde{W}_i$ of the point $(z,0)$ in $\A^m \times B \times \A^1$ }
\item[$\bullet$]{A reduced closed subscheme $\widetilde{Z}_i \subseteq \widetilde{W}_i$ containing the point $(z,0),$ which is a local henselian scheme and is finite over$B$}
\item[$\bullet$]{A morphism $\tilde{g}:\widetilde{W}_i \to \overset{\circ}X$ and functions $\phi:\widetilde{W}_i \to \A^m,$ such that $\phi^{-1}(0)\cap \tilde{g}^{-1}(S^0) = \widetilde{Z}_i.$ }
\end{itemize}

By~\cite[Lemma 4.2]{GNP} applied to the situation when as the tuple $(V,Z,U,Y)$ we take $(\A^m \times B \times \A^1, (z,0) ,B,\widetilde{Z}_i),$ $\widetilde{Z}_i$ is identified with a closed subscheme in $\A^m \times B \times \A^1.$ Thus these data define a level $m$ explicit framed correspondence from $B \times \A^1$ to $(\overset{\circ}X,\overset{\circ}X - \overset{\circ}S).$ From the definitions of $\overset{\circ}X$ and $\overset{\circ}S$ it is clear that the same data, composed with the embedding $\overset{\circ}X \hookrightarrow X_{k(x)},$ and the natural projection $X_{k(x)} \to X$ define a correspondence $h_i \in Fr_m(B \times A^1, (X,X-S)).$

If $c_i$ is the component of the correspondence $c$ with support $Z_i,$ we prove that $h_i$ is a homotopy between $c_i$ и $\emptyset.$ Indeed, $i_0^* h_i$ is given by the data 

\[(\widehat{Z}_i^0=\widetilde{Z}_i \times_{W \times \A^1} (W \times 0), \widehat{W}_i^0=\widetilde{W}_i \times_{W \times \A^1} (W \times 0), \phi|_{\widehat{W}_i^0},\tilde{g}|{\widehat{W}_i^0}).\]

Note that the canonical morphism $\widetilde{W}_i \to W \times \A^1$ equips $\widehat{W}_i^0$ with an \'etale morphism $can$ to $W.$ The final data $\phi$ is derived from the data for the framed correspondence $c_i$ by composing with $can.$ The same is true about the data $g$ after composing with an \'etale morphism $e$ (defined in some neighbourhood)

\[
e \circ \tilde{g}|_{\widehat{W}_i^0} = e \circ g \circ can
\]

After passing to a smaller neighbourhood $\widehat{W}_i^0$ (by removing some connected components), we may assume that $\tilde{g}|_{\widehat{W}_i^0}) = g \circ can.$ In order to check that $i_0^* h_i=c_i,$ it remains to check that $\widehat{W}_i^0$ is an \'etale neighbourhood of $Z_i$ in $W.$ This is true because it is an \'etale neighbourhood of the point $z,$ and because $Z_i$ is a local henselian scheme. 

Since the support $\widetilde{Z}_i$ is a local henselian scheme, and its closed point lies over $0,$ the fiber $\widetilde{Z}_i \times_{W \times \A^1} (W \times 1)$ is empty, therefore $i_1^* h_i = \emptyset.$

Passing to the disjoint union over $i,$
 
\[h=\l(\coprod \widetilde{Z}_i,\coprod \widetilde{W'}_i, \coprod \phi, \coprod \tilde{g}\r) \in Fr_m(B \times \A^1, (X,U)).\]

We have $(i_0)^* h = c, (i_1)^* h = \emptyset.$ The homotopy $h$ we have constructed connects the correspondence $c$ with the empty correspondence. 
\end{proof}

\begin{proof}[Proof 2, in the quasiprojective case.]

Suppose $X$ quasiprojective.

Let $B$ be a regular local Henselian scheme, $c=(Z,W,\phi,g) \in Fr_m(B,(X,U)).$ We prove that $c$ is connected by a sequence of homotopies to the zero framed correspodence $\emptyset.$

By Lemma~\ref{l:homotopy_chain_to_qf} applied to the case $Q=(pt,\emptyset), P=(X,U),$ we may assume that $c \in Fr_m^{qf}(B,(pt,\emptyset),(X,U).$ This means that $\phi^{-1}(0)$ is quasifinite over $B.$

Since $B$ is a local henselian scheme, by~\cite[I.4.2]{Mil}, $Y=\phi^{-1}(0)$ splits into a disjoint union $Y=Y_0 \coprod Y_1 \coprod \cdots \coprod Y_r,$ where te fiber of $Y_0$ over the closed point is empty, and the $Y_i$ are finite and connected. Since $Z$ is finite over $B,$ all of its components have a nonempty fiber over the closed point, therefort they are not contained in $Y_0.$ Refining $W$ by removing the closed subscheme $Y_0,$ we may assume that $\phi^{-1}(0)$ is finite over $B.$

Now (after refining $W$) the tuple $(Y,W,\phi,g)$ defines a correspondence $\tilde{c} \in Fr_m(B,X) =Fr_m(B,(X,\emptyset)).$ Let $x_1, \cdots, x_l$ be all the images of the closed points of $Y$ under $g.$ (these are points in $X$, not necessarily closed.) Since $X$ is quasiprojective, all of them are contained in some addine open subset of $X.$ Let $V=Spec(\cc O_{X,\{x_1, \dots, x_l\}})$ be the semilocal scheme playing the role of an infinitesimal Zariski neighbourhood of the points $x_1, \dots, x_l.$ We prove that the correspondence $\tilde{c}$ in some sense factors through $V.$

Followng the thought of~\cite[Proposition 3.8 and Definition 3.10]{GP}, we introduce for a $k$-smooth variety $B' \in Sm(k)$ and a pointed Nisnevich sheaf $\cc{F} \in Nis_*(Sm(k))$ the sets $\cc Fr _m(B',\cc{F})$ of level $m$ framed correspondences from $B'$ to $\cc{F}.$

\begin{definition}
Let $B' \in Sm(k),$ $\cc{F} \in Nis_*(Sm(k)).$  The set $\cc Fr _m(B',\cc{F})$ of level $n$ framed correspondences from $B'$ to $\cc{F}$ is the set $Hom_{Nis_*}(B_+ \wedge \bb{P}^{\wedge m}, \cc F \wedge T^{\wedge m}).$
\end{definition} 

\begin{note}\label{n:Fr_m-Sh}
If $F=X''/U''$ for $(X'',U'') \in SmOp(Fr_0(k)),$ there is a natural bijection $\Fr _m(B',(X'',U'')) \simeq \cc Fr_m (B',X''/U'').$ (See~\cite[Proposition 3.8]{GP}
\end{note}

\begin{note}\label{n:Fr-exact}
The functor $\cc Fr_m(B',-)$ is left exact, i.e. respects limits. Indeed, the smash product $\wedge$ and $Hom_{Nis_*}(B' \wedge \bb{P}^{\wedge m},-)$ have that property, hence so does their composition.  
\end{note}

Note that $V$ is the limit of affine neighbourhoods $V^j$ of points $x_1, \cdots, x_l$ (here we use quasiprojectivity of $X$). Also, since the support $Y$ of the correspondence $\tilde{c}$ is semilocal, after passing to a smaller $W$ we get that the correspondence $\tilde{c}$ can uniquely be factored through any neighbourhood $V^j.$ Hence we get a compatible system of elements, i.e. an element of the inverse limit. By Note~\ref{n:Fr-exact}, this means that the correspndence $\tilde{c} \in \Fr_m(B,X)= \cc Fr (B, Hom(-,X_+))$ factors into the composition of $\tilde{\tilde{c}} \in \cc Fr_m(B,Hom(-,V_+))$ and the canonical embedding $in: Hom(-,V_+) \to Hom(-,X_+).$ 

 Meanwhile the original correspondence $c$ can be written as the composition of $\tilde{c}$ with the canonical factorisation morphism $p: (X,\emptyset) \to (X,U),$ which is the identity on $X.$ Translating to the language of pointed Nisnevich sheaves, $p$ corresponds to the canonical projection $X_+ \to X/U.$ Combining these two factorisations, we get $c= p \circ in \circ \tilde{\tilde{c}}.$ It suffices to show that the morphism $p \circ i: Hom(-,V_+) \to X/U$ is naively homotopical to the distinguished constant morphism $* \in Hom_{Nis_*}(Hom(-,V_+),X/U).$ The latter statement was proved for a quasiprojective $X$ in~\cite[Theorem 2.1]{PanMove}, using the technique of almost elementary fibrations and nice triples.

\end{proof}

\begin{corollary} \label{c:connectedness_simplicial}
For any simplicial object $P_{\bullet}$ in $SmOp(Fr_0(k)),$ if $P_0=(X_0,U_0),$ and $U_0$ intersects each connected component of $X_0,$ then the bisimplicial presheaf of pointed sets 
$C_*\Fr(-,P_{\bullet})$ is locally connected in the Nisnevich topology on $Sm/k.$
\end{corollary}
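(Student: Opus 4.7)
The plan is to deduce this corollary directly from Lemma~\ref{l:connectedness} applied to the single pair $P_0 = (X_0, U_0)$, without ever invoking the higher levels $P_n$ for $n \geq 1$. The first observation I would make is that, for any $U \in Sm/k$, the bisimplicial pointed set $C_*\Fr(U, P_\bullet)$ has $(p,q)$-bisimplices equal to $\Fr(\Delta^p \times U, P_q)$, so its set of vertices (the $(0,0)$-bisimplices) coincides with $\Fr(U, P_0)$. The distinguished point of each $\Fr(\Delta^p \times U, P_q)$ is the zero framed correspondence (with empty \'etale neighbourhood $W$), and these basepoints are compatible with every face, degeneracy, and Suslin map.

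Next I would recall that $\pi_0$ of a bisimplicial pointed set is the quotient of the vertex set by the equivalence relation generated by the images of the two kinds of $1$-simplices. To prove Nisnevich-local connectedness, it therefore suffices to connect every element of $\Fr(U^h_u, P_0)$ to the zero correspondence by a chain of $1$-simplices in the Suslin direction alone, i.e.\ through elements of $\Fr(\Delta^1 \times U^h_u, P_0)$, for $U^h_u$ a Nisnevich stalk at a point $u$ of a smooth scheme. Translated through the colimit identification of Lemma~\ref{l:Fr-colim}, this is precisely the content of Lemma~\ref{l:connectedness} applied to the pair $(X_0, U_0)$, whose open part intersects each connected component of $X_0$ by hypothesis.

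The main point is that the inclusion of simplicial sets $C_*\Fr(-, P_0) \hookrightarrow C_*\Fr(-, P_\bullet)$ (as the column at $P_\bullet$-degree zero in the bisimplicial object) preserves both vertices and basepoints, so local connectedness of the sub-simplicial set immediately forces local connectedness of the whole bisimplicial set. I do not expect any substantial obstacle here: the simplicial direction of $P_\bullet$ plays no role in the argument, although the $1$-simplices it contributes can only make the quotient $\pi_0$ smaller, so the additional paths are harmless. The entire work of the corollary is contained in Lemma~\ref{l:connectedness}.
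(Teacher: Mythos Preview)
Your proposal is correct and follows exactly the same route as the paper: the paper's proof is the single sentence ``It suffices to prove local connectedness of the $0$th space, and that is the content of the previous Lemma~\ref{l:connectedness}.'' You have simply unpacked what that sentence means at the level of vertices and $1$-simplices of the diagonal, which is fine and accurate.
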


\begin{proof}
It suffices to prove local connectedness of the $0$th space, and that is the content of the previous Lemma~\ref{l:connectedness}.
\end{proof}

We prove a more general statement on higer connectedness:

\begin{proposition} \label{p:connectedness}
Let $r>0,$ and let $(X,U=X-S) \in SmOp(Fr_0(k))$ be such a pair that $codim_{X_i}(S \cap X_i)>r$ in each connected (Or, which is the same thing for a  $k$-smooth scheme, irreducible) component $X_i \subseteq X.$ Then the simplicial sheaf $C_*Fr(-,(X,U))$ is Nisnevich-locally $r$-connected.
\end{proposition}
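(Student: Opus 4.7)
The plan is to extend Proof~1 of Lemma~\ref{l:connectedness} to higher-dimensional null-homotopies, taking advantage of the strengthened codimension bound on $S$. Nisnevich-locally it would suffice to fix a local henselian stalk $B$ and show that the pointed simplicial set $C_*\Fr(B,(X,U))$ is $r$-connected. The case $i=0$ is already contained in Lemma~\ref{l:connectedness}; for $1\le i\le r$, the standard argument reduces this to showing that every simplicial map $\sigma:\partial \Delta^{i+1}\to C_*\Fr(B,(X,U))$ based at $\emptyset$ extends to $\Delta^{i+1}$, i.e.\ that any compatible family of framed correspondences on the faces of $\partial \Delta^{i+1}$ lifts to an element of $\Fr_m(B\times \Delta^{i+1},(X,U))$. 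First I would apply Lemma~\ref{l:homotopy_chain_to_qf} face-by-face, and use Lemma~\ref{l:d_for_finite_n_of_c} to choose a common degree bound, reducing to the case where each face lies in $\Fr^{qf}$.

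The key geometric step would be an $(r+1)$-parameter analogue of the perturbation used in Proof~1 of Lemma~\ref{l:connectedness}. After passing to a connected component of the support and choosing étale coordinates $e:\overset{\circ}X\to \A^d_{k(x)}$ with $d=\dim_x X$, the hypothesis that $S$ has codimension greater than $r$ in $X$ gives $\dim S<d-r$; hence $e(\overset{\circ}S)$ lies in a proper closed subvariety $L\subset \A^d_{k(x)}$ with $\dim L\le d-r-1$. Iterated application of the linear Noether-normalisation lemma from Proof~1 of Lemma~\ref{l:connectedness} would change coordinates so that the projection $p_{1,\dots,d-r-1}:\A^d_{k(x)}\to \A^{d-r-1}_{k(x)}$ is finite on $L$; equivalently, the restriction of $p_{1,\dots,d-r-1}\times \id_{\A^{r+1}}$ to $L\times\A^{r+1}$ is finite. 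Perturbing $g$ by $(0,\dots,0,s_1,\dots,s_{r+1})\cdot lift$ over parameters $s=(s_1,\dots,s_{r+1})\in \A^{r+1}$, we would obtain a new gluing map $\tilde g_A:W\times\A^{r+1}\to \overset{\circ}X$ whose support $\tilde g_A^{-1}(\overset{\circ}S)\cap \phi^{-1}(0)$ is finite over $B$; the connected component containing the closed point of the original support is local henselian lying over $0\in\A^{r+1}$, so its fiber over any $s\ne 0$ is empty. This would produce a framed correspondence $H\in\Fr_m(B\times\A^{r+1},(X,U))$ with $H|_{s=0}=c$ and vanishing restrictions to parallel parameter hyperplanes.

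The main obstacle I expect is organising these pointwise multi-parameter null-homotopies into a coherent filling of the sphere $\sigma$. Different faces of $\sigma$ may be supported on different image points $x\in X$; since only finitely many points appear and $k$ is infinite, one can pick a single linear change of coordinates on $\A^d$ achieving the required Noether normalisation simultaneously for every image. The restriction of the resulting $H$ to coordinate faces of $\A^{r+1}$ should then match the face data of $\sigma$ after identifying an algebraic model of $\Delta^{i+1}$ with a subscheme of $\A^{r+1}$ and reindexing $(s_1,\dots,s_{r+1})$ as barycentric coordinates on $\Delta^{i+1}$: the hyperplane $\{s_j=0\}$ would recover the $j$-th face of $\sigma$, while the opposite face would recover $\emptyset$ by the vanishing property of the perturbed support. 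Carrying out this reindexing compatibly across all faces simultaneously, and refining $W$ to ensure the supports glue correctly, will be the technical crux.
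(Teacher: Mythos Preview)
Your approach has a genuine gap at the sphere-filling step. The multi-parameter null-homotopy $H\in\Fr_m(B\times\A^{r+1},(X,U))$ you construct satisfies $H|_{s=0}=c$ for a \emph{single} correspondence $c$; its restrictions to the coordinate hyperplanes $\{s_j=0\}$ are then lower-parameter null-homotopies of that same $c$, not the distinct faces $c_0,\ldots,c_{i+1}$ of an arbitrary sphere $\sigma$. No reindexing of $\A^{r+1}$ as $\Delta^{i+1}$ can repair this, because the boundary of $H$ is determined by $c$ alone. What is actually required is a null-homotopy that is \emph{natural} in the source scheme, so that it commutes with the face maps of $\Delta^\bullet$ and hence contracts the entire simplicial subset containing $\sigma$ at once; the extra parameters are not the right mechanism for this.

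The paper's argument differs from yours in two essential ways. First, it does not work over a henselian local $B$: it reduces to the fraction field $F$ via the injectivity $H_k(M_{fr}((X,U)))((\cc O_{B,b})^h)\hookrightarrow H_k(M_{fr}((X,U)))(\Spec F)$ (Lemma~\ref{l:acyclicity_field}, which relies on the theory of homotopy invariant quasistable $\ZF_*$-presheaves), together with the equivalence of local $r$-connectedness and local $r$-acyclicity of $M_{fr}((X,U))$ (Lemma~\ref{l:connectedness_acyclicity}). This reduction is crucial: over a positive-dimensional $B$ the $i$-simplices live over $B\times\Delta^i$, whose dimension exceeds $r$, and the dimension count that follows would fail. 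Second, the codimension hypothesis is not used to gain $r+1$ homotopy parameters, but to guarantee that a \emph{single} generic direction $v\in F^d$ works: the incidence map $(Y\times_k L)\setminus\Gamma(e\circ g|_Y)\to\bb P^{d-1}_F$ has source of dimension at most $r+(d-r-1)=d-1$, so for almost all $v$ its fibre over $[v]$ is finite, and then $Z\times 0$ is an \emph{entire} connected component of the perturbed zero locus rather than merely contained in one (Lemma~\ref{l:almost_all_v}). This is precisely what makes the one-parameter homotopy of Proof~1 into a natural transformation on the subfunctor $Fr_m^{<d_1,\ldots,<d_n,v}$ (Lemma~\ref{l:contractible}); naturality then contracts all of $C_*Fr_m^{<d_1,\ldots,<d_n,v}(\Spec F,(X,U))$ inside $C_*Fr(\Spec F,(X,U))$, and a compactness argument on the $r$-skeleton finishes the proof.
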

\begin{plan}

The proof follows the plan below:
\begin{itemize}

\item[$\bullet$]{Since $C_*Fr(-,(X,U))$ is locally connected, by the stable Hurewicz theorem~\cite[Proposition II.6.30(i)]{Sch} if $H_k(M_{fr}((X,U))$ is locally zero for $k \leq r$, then $M_{fr}((X,U))$ is locally $r$-connected. since $M_{fr}((X,U))$ is locally an $\Omega$-spectrum, this is the same as $C_*Fr(-,(X,U))$ being locally $r$-connected. }

\item[$\bullet$]{Since the cohomology presheaves $H_k(M_{fr}((X,U))=\pi_k(\bb Z M_{fr}((X,U)))=\pi_k(LM_{fr}((X,U)))=H_k(C_* \ZF(-,(X,U))$ (The first equality follows from the assembly morphism being a stable equivalence, see~\cite[pp. 280-281]{Sch})  are homotopy invariant quasistable presheaves of abelian groups with $\ZF_*$-transfers, the map $H_k(M_{fr}((X,U))(B) \to H_k(M_{fr}((X,U))(Spec(k(B))) $ is injective, where $B$ is the henselisation of a local ring of a $k$-smooth variety}

\item[$\bullet$]{Using the homotopy Mayer-Vietoris triangle for pairs, the general case is reduced to the case of a "sufficiently small" $X.$}

\item[$\bullet$]{For a field $F/k,$ we equip the functor $Fr_m(-,(X,U))$ on the category $AffSm(F)$ with a family of subfunctors $Fr_m^{<d_1, \ldots <d_n,v}(-,(X,U);f_1,\ldots f_n),$ and any finite set of framed correspondences from affine smooth $F$-schemes of dimension no more than $r$ is included into one of such subfunctors}

\item[$\bullet$]{For each functor $Fr_m^{<d_1, \ldots <d_n,v}(-,(X,U);f_1,\ldots f_n)$ we define a chain of homotopies contracting it within $Fr(-,(X,U))$. Thus the simplicial set 
\[C_*Fr_m^{<d_1, \ldots <d_n,v}(Spec(F),(X,U);f_1,\ldots f_n) \subset C_*Fr_m(Spec(F),(X,U))\]
can be contracted inside $C_*Fr_m(-,(X,U)).$.}

\item[$\bullet$]{Any map from $S^k, k \leq r,$ to the geometric realisation $|C_*Fr_m(Spec(F),(X,U))|$ is homotopic to a map to the $r$-skeleton. Then, since the image of this map only intersects a finite number of simplices, and they are all of dimension $\leq r,$ it can be factored through one of the subcomplexes $C_*Fr_m^{<d_1, \ldots <d_n,v}(-,(X,U);f_1,\ldots f_n)$, which can be contracted inside $C_*Fr_m(-,(X,U)).$}
\end{itemize} 

\end{plan}

\begin{lemma}\label{l:connectedness_acyclicity}
Under the conditions of Proposition~\ref{p:connectedness}, the sheaf $C_*Fr(-,(X,U))$ is locally $r$-connected in the Nisnevich topology if and only if the spectrum $(M_{fr}((X,U))$ is locally $r$-acyclic.
\end{lemma}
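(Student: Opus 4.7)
The plan is to deduce this equivalence by translating both conditions into statements about the stable homotopy of the spectrum $M_{fr}((X,U))$ and then invoking the stable Hurewicz theorem. The two inputs that do the real work are already proved in the paper: the local connectedness of $C_*Fr(-,(X,U))$ from Lemma~\ref{l:connectedness}, and the $\Omega$-spectrum property of $M_{fr}((X,U))_f$ from Proposition~\ref{cor:A1_fibrant_3}.

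First, by Lemma~\ref{l:connectedness} the simplicial sheaf $C_*Fr(-,(X,U))$ is Nisnevich-locally connected, and by Proposition~\ref{cor:A1_fibrant_3} the $S^1$-spectrum $M_{fr}((X,U))_f$ is a motivically fibrant $\Omega$-spectrum whose $n$th space is schemewise weakly equivalent to $C_*Fr(-,(X,U)\otimes S^n)_f$. The structural maps therefore induce canonical local identifications
\[
\pi_k(C_*Fr(-,(X,U))) \;\cong\; \pi_k(M_{fr}((X,U))) \qquad (k \geq 0).
\]
By definition, the sheaf $C_*Fr(-,(X,U))$ is Nisnevich-locally $r$-connected exactly when $\pi_k(C_*Fr(-,(X,U)))$ vanishes locally for $0 \leq k \leq r$; via the above identification this is equivalent to the local vanishing of $\pi_k(M_{fr}((X,U)))$ in the same range.

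Second, $M_{fr}((X,U))$ is a Segal $S^1$-spectrum (see \cite[Definition 5.2]{GP}, \cite[Proposition 1.4]{Seg}), hence automatically $(-1)$-connected, and combined with the local connectedness of its zeroth space we see that it is locally a $0$-connected spectrum. The stable Hurewicz theorem \cite[Proposition II.6.30]{Sch}, applied at each Nisnevich stalk (i.e., at the henselisation of a local ring of a point of $Sm/k$), asserts that for such a connective spectrum the vanishing of $\pi_k$ throughout the range $k \leq r$ is equivalent to the vanishing of $H_k$ throughout the same range. Since both $\pi_*$ and $H_*$ commute with the filtered colimits defining Nisnevich stalks, the stalk-wise equivalence globalises, and putting the two steps together produces the claimed equivalence between local $r$-connectedness of $C_*Fr(-,(X,U))$ and local $r$-acyclicity of $M_{fr}((X,U))$.

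I do not expect a serious obstacle: the lemma is essentially a formal repackaging of the standard stable Hurewicz comparison in the Nisnevich-local setting, once one knows that the zeroth space is already locally connected and that the spectrum is locally an $\Omega$-spectrum. The only point requiring mild care is the compatibility of Hurewicz with passage to Nisnevich stalks, which is automatic because the constructions of $\pi_*$ and $H_*$ of a connective spectrum are preserved under the relevant filtered colimits.
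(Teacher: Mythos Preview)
Your proposal is correct and follows essentially the same route as the paper: identify the local homotopy of the zeroth space with that of the spectrum via the local $\Omega$-spectrum property, then invoke the stable Hurewicz theorem \cite[Proposition II.6.30]{Sch} to pass between connectedness and acyclicity. The paper cites \cite[Theorem 6.5]{GP} directly for the $\Omega$-spectrum property rather than going through Proposition~\ref{cor:A1_fibrant_3}, but this is only a cosmetic difference, and your added remarks on stalkwise application and filtered colimits are harmless elaborations of what the paper leaves implicit.
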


\begin{proof}
By~\cite[Theorem 6.5]{GP}, $(M_{fr}((X,U))$ is locally an $\Omega$-spectrum. Therefort local $r$-connectedness of its zeroth space $C_*Fr(-,(X,U))$ is equivalent to the local $r$-connectedness of the entire spectrum $(M_{fr}((X,U)).$ By the stable Hurewich theorem~\cite[Proposition II.6.30(i)]{Sch}, the first nonzero homotopy group is isomorphic to the corresponding homology group. Hence local $r$-acyclicity is equivalent to local $r$-connectedness.
\end{proof}

\begin{lemma}\label{l:acyclicity_field}
Under the conditions of Proposition~\ref{p:connectedness}, if $B \in Sm(k), b \in B,$ then for any $k$ the map $H_k(M_{fr}((X,U))((\cc O_{B,b})^h_b) \to H_k(M_{fr}((X,U))(Spec(k((\cc O_{B,b})^h_b)))$ is injective.
\end{lemma}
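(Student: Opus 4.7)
The plan is to identify the homology presheaves of $M_{fr}(X,U)$ with the homology of a Suslin-type complex of linear framed correspondences, and then invoke a general injectivity theorem for homotopy invariant quasi-stable radditive framed presheaves of abelian groups over an infinite perfect field.

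First, I would establish the chain of natural isomorphisms of presheaves of abelian groups on $Sm/k$:
\[
H_k(M_{fr}(X,U)) \cong \pi_k(\bb Z M_{fr}(X,U)) \cong \pi_k(LM_{fr}(X,U)) \cong H_k(C_*\bb Z\F(-,(X,U))).
\]
The first isomorphism is the stable Hurewicz identification coming from the assembly morphism $M_{fr}(X,U)\to \bb Z M_{fr}(X,U)$ (cf.~\cite[pp.~280--281]{Sch}), combined with the fact that $M_{fr}(X,U)$ is $(-1)$-connected as a Segal spectrum. The second follows from Theorem~\ref{ZM_fr_and_LM_fr}, which asserts that $l_{(X,U)}\colon \bb Z M_{fr}(X,U)\to LM_{fr}(X,U)$ is a schemewise stable equivalence. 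The third is the very definition of $LM_{fr}$ as the Eilenberg--MacLane spectrum of the Suslin complex $C_*\bb Z\F(-,(X,U))$.

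Next, I would verify that the presheaf $\cc H_k:=H_k(C_*\bb Z\F(-,(X,U)))$ is a radditive quasi-stable framed presheaf of abelian groups on $Sm/k$ which is in addition $\A^1$-homotopy invariant. The framed transfer structure and radditivity are inherited from $\bb Z\F(-,(X,U))$ and pass through the formation of $C_*$ and through taking cohomology; $\A^1$-invariance is a standard consequence of the cosimplicial scheme $\Delta^\bullet$ appearing in the definition of $C_*$; and quasi-stability is immediate from $\bb Z\F(-,(X,U))=\colim_m\bb Z\F_m(-,(X,U))$, on which the stabilisation $\sigma$ is invertible by construction.

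Finally, I would invoke the injectivity theorem of Garkusha, Panin, and Neshitov for homotopy invariant quasi-stable radditive framed presheaves of abelian groups over an infinite perfect field (in the spirit of~\cite{GPHoInv}): for any such presheaf $\cc F$ and any essentially smooth local Henselian scheme $B$ with generic point $\eta$, the restriction $\cc F(B)\to \cc F(\eta)$ is injective. Applied to $\cc F=\cc H_k$ and $B=\Spec((\cc O_{B,b})^h_b)$, this yields the statement of the lemma. The main obstacle is the second step: namely, assembling the structural properties (framed $\ZF_*$-transfers, radditivity, $\A^1$-invariance, quasi-stability) of the homology presheaf $\cc H_k$ in exactly the form in which the cited injectivity theorem is stated; once this is done, the conclusion is a formal consequence.
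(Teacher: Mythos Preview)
Your proposal is correct and follows essentially the same route as the paper: identify $H_k(M_{fr}(X,U))$ with $\pi_k(\bb ZM_{fr}(X,U))$ via the assembly map (\cite{Sch}), pass to $\pi_k(LM_{fr}(X,U))$ using Theorem~\ref{ZM_fr_and_LM_fr}, observe that the resulting presheaf is a homotopy invariant quasi-stable presheaf with $\ZF_*$-transfers, and then apply the injectivity theorem from~\cite{GPHoInv} (the paper cites specifically \cite[Theorem~3.15(3$'$)]{GPHoInv}). Your write-up is in fact more explicit than the paper's terse proof; the only minor inaccuracy is the attribution---\cite{GPHoInv} is Garkusha--Panin, not Garkusha--Panin--Neshitov.
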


\begin{proof}
$H_k(M_{fr}((X,U)))=\pi_k(\bb ZM_{fr}((X,U))).$

(see~\cite[Definition II.6.24, end of p. 280]{Sch})

by Theorem~\ref{ZM_fr_and_LM_fr},  $\pi_k(\bb ZM_{fr}((X,U)))=\pi_k(LM_{fr}((X,U))).$ The latteris a homotopy invariant quasistable presheaf with $\ZF_*$-transfers. The statement of the Lemma folows from~\cite[Theorem 3.15(3')]{GPHoInv} \footnote{In the precious version of the cited text that was~\cite[Theorem 2.15(3')]{GPHoInv}} \}.  
\end{proof}

\begin{lemma}\label{l:cover}
Suppose that the statement of the Theorem is true for all pairs $(X',U'),$ where $X'$ is quasiaffine and \'etale over some affine space $\A^d.$ Then it is true for an arbitrary pair $(X,U) \in SmOp(Fr_0(k)).$
\end{lemma}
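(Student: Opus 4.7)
The plan is to translate the statement into a vanishing of homology of the linear framed complex and then argue by finite induction on an affine cover using Mayer--Vietoris.

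First, by Lemma~\ref{l:connectedness_acyclicity} combined with Theorem~\ref{ZM_fr_and_LM_fr}, proving local $r$-connectedness of $C_*Fr(-,(X,U))$ is equivalent to showing that the Nisnevich sheaves $H_k(C_*\ZF(-,(X,U)))$ vanish locally for all $k \leq r$. This rephrasing is useful because $C_*\ZF$ is a complex of presheaves of abelian groups, for which one has the usual Mayer--Vietoris long exact sequence.

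Second, since $X$ is Noetherian, I would choose a finite affine cover $X = X_1 \cup \dots \cup X_n$ such that each $X_i$ is contained in a single connected component of $X$ and admits an \'etale morphism to some $\A^{d_i}$, where $d_i$ is the dimension of the corresponding component; such a cover exists because every smooth affine scheme is Zariski-locally \'etale over affine space. Each pair $(X_i, U \cap X_i)$ then satisfies the hypothesis of the lemma: $X_i$ is affine (hence quasiaffine) and \'etale over $\A^{d_i}$, and the codimension condition on $S = X - U$ is inherited because $X_i$ is an open subscheme.

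Third, set $F_j := X_1 \cup \dots \cup X_j$ and prove by induction on $j$ that $H_k(C_*\ZF(-, (F_j, U \cap F_j)))$ vanishes locally for $k \leq r$. The base case $j=1$ is the hypothesis of the lemma applied to $(X_1, U \cap X_1)$. For the inductive step, set $Y := F_j \cap X_{j+1}$: this is a quasi-compact open subscheme of the affine scheme $X_{j+1}$, hence quasiaffine; it inherits the \'etale morphism to $\A^{d_{j+1}}$ and the codimension condition, so the hypothesis of the lemma applies to $(Y, U \cap Y)$. I would then apply the Mayer--Vietoris triangle for pairs given by Proposition~\ref{p:MV} (used already in the proof of Theorem~\ref{genCone}) to the decomposition $F_{j+1} = F_j \cup X_{j+1}$ with intersection $Y$. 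In the resulting long exact sequence of homology presheaves, the contributions of $F_j$, $X_{j+1}$ and $Y$ vanish locally for $k \leq r$ by the inductive hypothesis and the base case, so the contribution of $F_{j+1}$ does too, completing the induction; taking $j = n$ yields the lemma.

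The main obstacle is really only bookkeeping: verifying that the Mayer--Vietoris triangle of Proposition~\ref{p:MV} is available for pairs in the form needed. But this is exactly the triangle already invoked (for the analogous transfinite induction) in the proof of Theorem~\ref{genCone}, so no new machinery is required, and the argument goes through almost mechanically.
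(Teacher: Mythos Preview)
Your argument is correct and follows the same skeleton as the paper's proof---cover $X$, induct on the cover, and use Mayer--Vietoris for pairs at each step---but you make a different technical choice that is worth highlighting. The paper works at the level of the framed motive spectra $M_{fr}$ and invokes the \emph{homotopical} Mayer--Vietoris triangle (Proposition~\ref{p:HoMV}); at each step it must then appeal to Lemma~\ref{l:connectedness} and \cite[Theorem~6.5]{GP} to know that the relevant spectra are locally $\Omega$-spectra, so that connectivity of the zeroth space matches connectivity of the spectrum. You instead translate once, at the outset, to the vanishing of $H_k(C_*\ZF(-,(X,U)))$ via Lemma~\ref{l:connectedness_acyclicity} and Theorem~\ref{ZM_fr_and_LM_fr}, and then run the whole induction with the \emph{linear} Mayer--Vietoris triangle (Proposition~\ref{p:MV}). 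This is slightly cleaner: the long exact sequence in homology is immediate, and you never have to revisit the $\Omega$-spectrum condition for the intermediate pairs $(F_j,U\cap F_j)$. Your use of a finite cover (rather than the paper's transfinite induction) is also a harmless simplification, since schemes in $Sm/k$ are quasi-compact.
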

\begin{proof}
Since $X$ is a $k$-smooth scheme, $X$ can be covered by open $X_i,$ which are quasiaffine and equipped with \'etale maps to $\A^{d_i}.$ Suppose the $X_i$ enumerated by ordinals, and prove via induction that the simplicial sheaf  $C_*Fr_m(-,(\bigcup \limits_{i \leq \alpha} X_i, U \cap \bigcup \limits_{i \leq \alpha} X_i ))$ is locally $r$-connected. We only prove the induction step $\alpha \to \alpha+1,$ since the base $\alpha=0$ and the transfinite step are obvious. By Proposition~\ref{p:HoMV}, denoting $\bigcup \limits_{i \leq \alpha} X_i$ by $\widetilde{X},$ we have a distinguished triangle

\[\begin{tikzcd}
M_{fr}\l(\l(X_{\alpha+1} \cap \widetilde{X},U \cap X_{\alpha+1} \cap \widetilde{X}\r)\r) \arrow[d] \\
M_{fr} \l(\l(X_{\alpha+1}, U \cap X_{\alpha+1}\r)\r) \vee M_{fr} ((\widetilde{X},U \cap \widetilde{X})) \arrow[d] \\
 M_{fr} \l(\l(X_{\alpha+1} \cup \widetilde{X},U \cap \l(X_{\alpha+1} \cup \widetilde{X}\r)\r)\r),
\end{tikzcd}\]

We already know, by Lemma~\ref{l:connectedness}, that the $0$th spaces of the spectra in this triangle are connected, and then, by~\cite[Theorem 6.5]{GP}, these spectra a locally $\Omega$-spectra. Thus, they have the same local connectedness as their $0$th spaces. But the triangle above gives rise to a homotopy long exact sequence: 

\[\begin{tikzcd}
\pi_i \l( M_{fr} \l(\l(X_{\alpha+1}, U \cap X_{\alpha+1}\r)\r) \vee M_{fr} ((\widetilde{X},U \cap \widetilde{X})) \r)  \arrow[d] \\
 \pi_i \l( M_{fr} \l(\l(X_{\alpha+1} \cup \widetilde{X},U \cap \l(X_{\alpha+1} \cup \widetilde{X}\r)\r)\r) \r) \arrow[d]\\
\pi_{i-1} \l( M_{fr}\l(\l(X_{\alpha+1} \cap \widetilde{X},U \cap X_{\alpha+1} \cap \widetilde{X}\r)\r) \r),\\
\end{tikzcd}\]

For each $i \leq r,$ since the outer terms of the presented part of the sequence are zero by the induction hypothesis, the middle term will also be zero, proving the induction step.

\end{proof}

In what follows we suppose that
\begin{conditions} \label{c:local}
 $X$ is quasiaffine, $S$ is defined by $n$ equations $f_1, \ldots f_n,$ and an \'etale morphism $e:X \to \A^d$ is given on $X;$ $L \subset \bb A^d$ is a closed subscheme in $\A^d$ of codimension $>r,$ through which $e|_S$ factors; $F$ is the field of rational functions of a scheme of the type $(\cc O_{B,b})^h_b$.
\end{conditions}

\begin{definition}
Under the conditions~\ref{c:local}, let $A$ be an affine smooth scheme over $Spec(F).$ Define the subset 

\[Fr_m^{<d_1, \ldots <d_n,v}(A,(X,U);f_1,\ldots f_n) \subseteq (Fr_m^{qf,n})^{<d_1, \ldots <d_n}(A,(pt,\emptyset),(X,U);f_1,\ldots f_n)\]

(see Definition~\ref{multi-d}) by the following condition:

Let $c \in Fr_m^{<d_1, \ldots <d_n}(A,(X,U);f_1,\ldots f_n).$ Consider a correspondence $(Z,W,\phi,g)=t_0 \circ \cdots \circ t_n (c).$ It has the $F$-variety of zeros $Y=\{\phi=0\}$ quasifinite over $A.$ A morphism of schemes (varieties over different fields) $e \circ g|_Y:Y \to \A^d=\A^d_k$ is defined.

Let $v \in F^d-0.$ Consider a linear homotopy $\tilde{g}_v=g+sv:W \times \A^1 \to \A^d.$ We say that $c \in Fr_m^{<d_1, \ldots <d_n,v}(A,(X,U);f_1,\ldots f_n),$ if $Z \times 0$ is a connected component of $\tilde{g}_v^{-1}(L) \cap (Y \times \A^1).$
\end{definition}

\begin{lemma} \label{l:almost_all_v}
Under the conditions~\ref{c:local}, for any $A \in AffSm(Spec(F)), dim(A) \leq r$, for any $c \in Fr_m^{<d_1, \ldots <d_n}(A,(X,U);f_1,\ldots f_n),$ $c \in Fr_m^{<d_1, \ldots <d_n,v}(A,(X,U);f_1,\ldots f_n)$ for almost all $v.$
\end{lemma}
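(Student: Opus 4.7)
The plan is to carry out a Bertini-style dimension count on an incidence variety. The key quantitative inputs are $\dim(Y \times \A^1) \leq r + 1$ (which follows because $Y$ is quasifinite over $A$ and $\dim A \leq r$) together with $\operatorname{codim}_{\A^d}(L) \geq r + 1$.

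First, introduce the incidence variety
\[
\cc I = \bigl\{(y, s, v) \in Y \times \A^1 \times (F^d \setminus \{0\}) : e(g(y)) + sv \in L\bigr\}
\]
with projection $\pi : \cc I \to F^d \setminus \{0\}$, whose fibre over $v$ is precisely $\tilde g_v^{-1}(L) \cap (Y \times \A^1)$. Since $g(Z) \subseteq S$ and $e(S) \subseteq L$, the ``diagonal'' locus $\cc I_0 := Z \times \{0\} \times (F^d \setminus \{0\})$ is automatically contained in $\cc I$, and the condition defining $Fr_m^{<d_1,\ldots,<d_n,v}$ translates to saying the closure of the excess locus $\cc I^{\mathrm{ex}} := \cc I \setminus \cc I_0$ does not meet $\cc I_0$ above $v$. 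Thus the lemma reduces to showing $\pi\bigl(\overline{\cc I^{\mathrm{ex}}} \cap \cc I_0\bigr)$ has dimension strictly less than $d$ in $F^d \setminus \{0\}$.

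Next, split $\cc I^{\mathrm{ex}}$ into $\cc I^{\mathrm{ex}}_{s \neq 0}$ and $\cc I^{\mathrm{ex}}_{s = 0}$. On the open locus $\{s \neq 0\}$, fixing $(y, s)$ pins $v$ to the affine translate $\tfrac{1}{s}(L - e(g(y)))$ of dimension $d - \operatorname{codim}_{\A^d}(L) \leq d - r - 1$, so $\dim \cc I^{\mathrm{ex}}_{s \neq 0} \leq (r+1) + (d - r - 1) = d$. Since each irreducible component of $\overline{\cc I^{\mathrm{ex}}_{s \neq 0}}$ contains a point with $s \neq 0$, its intersection with $\cc I_0 \subseteq \{s = 0\}$ has strictly smaller dimension, and so projects into a Zariski closed subset of $F^d \setminus \{0\}$ of dimension at most $d - 1$. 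For the $s = 0$ part, $\cc I^{\mathrm{ex}}_{s = 0} = (Y_L \setminus Z) \times \{0\} \times (F^d \setminus \{0\})$ with $Y_L = Y \cap (e \circ g)^{-1}(L)$, a locus independent of $v$. Using that $Y \cap g^{-1}(S) = Z$ is preserved throughout the $t$-homotopies (each modifies $\phi$ by quantities vanishing on $g^{-1}(S)$), I would shrink the étale neighbourhood $W$ so as to separate the closed subset $\overline{Y_L \setminus Z}$ from $Z$, killing this contribution in a Zariski neighbourhood of $Z$.

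The main obstacle lies precisely in this $s = 0$ case, since the condition is $v$-independent: the lemma can only hold if $Z$ is in fact open in $Y_L$ in a neighbourhood of $Z$ inside $Y$, which demands exploiting the specific form of $\phi^{\mathrm{final}}$ arising from $t_0 \circ \cdots \circ t_n$ or a careful refinement of $W$. The $s \neq 0$ dimension count, by contrast, is a routine consequence of the codimension hypothesis. Combining the two bounds, the bad locus is contained in a proper closed subset of $F^d \setminus \{0\}$, yielding the ``almost all $v$'' conclusion.
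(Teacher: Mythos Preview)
Your dimension count for the $s \neq 0$ locus is the paper's argument in different packaging. The paper defines a ``line'' morphism
\[
line : (Y \times_k L) \setminus \Gamma(e \circ g|_Y) \longrightarrow \bb P^{d-1}_F,
\qquad (y,l) \longmapsto [\,l - e(g(y))\,],
\]
observes that the source has $F$-dimension at most $r + (d-r-1) = d-1$, equal to that of the target, and concludes that over a dense open set of directions $[v]$ the fibre is finite. A point $(y,s)$ with $s \neq 0$ and $e(g(y)) + sv \in L$ contributes $(y,\,e(g(y))+sv)$ to that fibre, so for such $v$ the $s \neq 0$ excess is a finite set $M$ of closed points, automatically closed and disjoint from $Z \times 0$. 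Your bound $\dim \cc I^{\mathrm{ex}}_{s\neq 0} \le d$ followed by intersection with $\{s=0\}$ is the same count, organised around the incidence variety rather than the line map.

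Where you go further is the $s=0$ case. The paper asserts that \emph{every} point of $\tilde g_v^{-1}(L)\cap (Y\times\A^1)$ outside $Z\times 0$ ``gives rise to a point in the fiber of $line$ over $[v]$'' and hence that the full excess $M$ is finite. But a point $(y,0)$ with $y \in Y_L \setminus Z$ produces $(y,\,e(g(y)))$, which lies \emph{on} the graph $\Gamma$ and so is outside the domain of $line$; the paper's argument simply does not see these points. You are right that this slice is $v$-independent and must be dealt with separately. Your proposed fix---shrink $W$ so as to excise $\overline{Y_L \setminus Z}$ from a neighbourhood of $Z$---is the correct move, and it goes through once one knows $S$ is a union of irreducible components of $e^{-1}(L)$: then $\overline{e^{-1}(L)\setminus S}$ is closed in $X$ and disjoint from $S$, so $g^{-1}\bigl(\overline{e^{-1}(L)\setminus S}\bigr)$ is closed in $W$ and misses $Z$, and one deletes it. This component condition can be arranged at the level of the cover in Lemma~\ref{l:cover} (choose $e$ and $L$ so that $e^{-1}(L)=S$ set-theoretically near each point of the piece, using that $e$ is \'etale). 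So the obstacle you flag is a genuine gap in the paper's proof as written, but a repairable one; your plan already contains the repair.
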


\begin{proof}
Consider the morphsim $line:(Y \times_k L) - \Gamma(e \circ g|_Y) \to \bb P^{d-1}_F$ given by passing a line through two (different) given points. 
The $F$-variety map $line$ has source dimension not greater than $r+(d-r-1)=d-1,$ and target dimension $d-1,$ thus this morphism is quasifinite over the generic point, hence over some open subset. Let $v$ be such that the fiber over $[v] \in \bb P^{d-1}$ is finite. For the homotopy $\tilde{g}_v=g+sv:W \times \A^1 \to \A^d,$  $\tilde{g}_v^{-1}(L) \cap (Y \times \A^1),$ except for $Z \times 0,$ contains only a finite set $M$ of other points, since the other points give rise to points in the fiber of $line$ over $[v].$ Since $Z \times 0$ is closed, and a finite set of (closed) points $M$ is also closed, $Z \times 0$ is a connected component.
\end{proof}

\begin{lemma}\label{l:contractible}
Under the conditions~\ref{c:local}, for each functor $Fr_m^{<d_1, \ldots <d_n,v}(-,(X,U);f_1,\ldots f_n)$ there are $n+1$ natural homotopies

\[\tilde{h}_0, \dots, \tilde{h}_{n}:Fr_m^{<d_1, \ldots <d_n,v}(-,(X,U);f_1,\ldots f_n) \to Fr_m(- \times \A^1,(X,U)),\]

deforming the embedding $Fr_m^{<d_1, \ldots <d_n,v}(-,(X,U);f_1,\ldots f_n) \to Fr_m(-,(X,U))$ into the distinguished constant map $const_{\emptyset}.$
\end{lemma}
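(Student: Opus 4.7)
\begin{plan}
The strategy is to construct a chain
\[c = c_n \to c_{n-1} \to \cdots \to c_1 \to c_0 = \emptyset\]
of framed correspondences, where each arrow is one of the $n+1$ natural homotopies $\tilde{h}_k$ we wish to produce. For Stage~1 ($1 \leq k \leq n$), set $\tilde{h}_k := h^k_{d_k}$. Writing $c_k := t_{d_{k+1}} \cdots t_{d_n}(c)$ with $c_n := c$, the nested Definition~\ref{multi-d} ensures that $c_k \in (Fr_m^{qf, k})^{<d_k}$ at each step, so Lemma~\ref{l:homotopy_properties}(i) guarantees $c_{k-1} := t_{d_k}(c_k) \in Fr_m^{qf, k-1}$; the explicit formula for $h^k_{d_k}$ always produces a valid framed correspondence on $A \times \A^1$ connecting $c_k$ at $s=0$ to $c_{k-1}$ at $s=1$, and this construction is visibly natural in $A$. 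After $n$ such steps we land at $c_0 = t_{d_1} \cdots t_{d_n}(c) \in Fr_m^{qf,0}$, whose zero-locus $Y = \{\phi = 0\}$ is quasifinite over $A$.

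For Stage~2 (the last homotopy $\tilde{h}_0$), write $c_0 = (Z, W, \phi, g)$ and consider the linear homotopy $\tilde{g}_v := e \circ g \circ pr_W + sv \colon W \times \A^1 \to \A^d$, where $s$ is the coordinate on the homotopy $\A^1$. The defining condition of $Fr_m^{<d_1, \ldots, <d_n, v}$ asserts that $Z \times 0$ is a connected component of $\tilde{g}_v^{-1}(L) \cap (Y \times \A^1)$, so we can remove the remaining components and obtain an open $\widetilde{W}_1 \subseteq W \times \A^1$ with $\tilde{g}_v^{-1}(L) \cap (Y \times \A^1) \cap \widetilde{W}_1 = Z \times 0$. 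Using \'etaleness of $e$ and the canonical section at $s = 0$ given by $g \circ pr_W$, we lift $\tilde{g}_v|_{\widetilde{W}_1}$ to $\tilde{g} \colon \widetilde{W} \to X$, where $\widetilde{W}$ is the clopen component of $X \times_{\A^d, \tilde{g}_v} \widetilde{W}_1$ containing the image of this section. The tuple $(Z \times 0, \widetilde{W}, \phi \circ pr_W, \tilde{g})$ defines $\tilde{h}_0(c_0)$: its fibre at $s = 0$ recovers $c_0$, while its fibre at $s = 1$ has empty support, since any $w$ with $\tilde{g}(w, 1) \in S$ would satisfy $\tilde{g}_v(w, 1) \in e(S) \subseteq L$, forcing $(w, 1) \in Z \times 0$ --- a contradiction.

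The main obstacle will be to verify that the operations in Stage~2 --- the open refinement $\widetilde{W}_1$ and the selection of the clopen component $\widetilde{W}$ --- assemble into a natural transformation of functors on $AffSm(F)$. Since the ``wrong'' components of $\tilde{g}_v^{-1}(L) \cap (Y \times \A^1)$ and the fibre-product construction $X \times_{\A^d} \widetilde{W}_1$ both commute with base change along morphisms $A' \to A$, and since framed correspondences are considered only up to \'etale refinement of the neighbourhood $W$, the assignment $c \mapsto \tilde{h}_0(c)$ will yield a well-defined natural transformation once one takes this equivalence into account.
\end{plan}
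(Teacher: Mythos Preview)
Your plan is correct and follows essentially the same approach as the paper. Stage~1 matches the paper's prescription $\tilde{h}_k = h^k_{d_k}\circ t_{d_{k+1}}\circ\cdots\circ t_{d_n}$ (your shorthand $\tilde{h}_k := h^k_{d_k}$ is slightly imprecise as written, but your subsequent explanation makes the intended precomposition clear), and Stage~2 is precisely the paper's construction of the final homotopy via the linear shift $e\circ g + sv$ and the fibre product with $X$ over $\A^d$. One small terminological point: the subset $\widetilde{W}\subseteq X\times_{\A^d}\widetilde{W}_1$ is not literally a \emph{clopen component}, since the section is only defined over the closed subset $Z\times 0$; rather, one removes from the fibre product the closed complement of the section's image inside the preimage of $Z\times 0$ (this is exactly what the paper does, and what your construction amounts to). Your naturality discussion is in fact more explicit than the paper's, which simply asserts that the constructions behave well under base change.
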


\begin{proof}
One should take $\tilde{h}_1, \ldots, \tilde{h}_n$ to be 

\[h_{d_1} \circ t_{d_2} \circ \cdots \circ t_{d_n}, h_{d_2} \circ t_{d_3} \circ \cdots \circ t_{d_n} , \ldots, h_{d_n}.\]

These homotopies deform any correspondence to a "quasifinite" one. The rest of the proof is devoted to constructing the final homotopy.

We have a correspondence $t_{d_1} \circ t_{d_2} \circ \cdots \circ t_{d_n}(c).$ It has $Y=\phi^{-1}(0)$ quasifinite over $A.$ Consider a linear homotopy $\tilde{g}_v=g+sv:W \times \A^1 \to \A^d.$ It has $\tilde{g}_v^{-1}(L) \cap (Y \times \A^1)$ have $Z \times 0$ as a connected component. $X \times_{A^d} (W \times \A^1-M)$ maps in an \'etale manner to $(W \times \A^1-M),$ and that map has a section on a closed subset $Z \times 0.$ After elimiating the extra components of the preimage of $Z \times 0,$ we have an \'etale neighbourhood $\widetilde{W}$ of the closed subvariety $Z \times 0 \subset \A^m \times A \times \A^1.$ Denote $\tilde{\phi}=\phi \circ pr_1.$ Denote by $\tilde{g}$ the morphism $pr_1:\widetilde{W} \to X.$ Then $\tilde{\phi}^{-1}(0) \cap \tilde{g}^{-1}(S) = Z \times 0.$ This support is finite over $A \times 0,$ and, a fortiori, over $A \times \A^1.$ Meanwhile, the fiber of the support over $1$ is empty, therefore the homotopy deforms $t_1 \circ \cdots \circ t_n(c)$ to the empty correspondence $\emptyset.$

It is easy to check that these constructions behave well with under morphisms between various affine smooth $F$-schemes $A,$ and therefore they give a natural transformation 

\[\tilde{h}_0: Fr_m^{<d_1, \ldots <d_n,v}(-,(X,U);f_1,\ldots f_n) \to Fr_m(- \times \A^1,(X,U)).\]

\end{proof}

\begin{lemma} \label{l:connected_over_field}
Under the conditions~\ref{c:local}, the simplicial set $C_*Fr(Spec(F),(X,U))$ is $r$-connected.
\end{lemma}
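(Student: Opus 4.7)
The plan is to show $\pi_k(|C_*Fr(\Spec(F),(X,U))|)=0$ for every $0\leq k\leq r$. The case $k=0$ is covered by Lemma~\ref{l:connectedness}; for $k\geq 1$, take a pointed map $f:S^k\to |C_*Fr(\Spec(F),(X,U))|$. By simplicial approximation, $f$ is homotopic to a simplicial map from a triangulation of $S^k$, whose image lies in a finite simplicial subset involving only finitely many nondegenerate simplices $c_1,\ldots,c_N$. Each $c_\alpha$ is an element of $Fr_{m_\alpha}(\Spec(F)\times\A^{j_\alpha},(X,U))$ with $j_\alpha\leq k\leq r$, so the sources are affine smooth $F$-schemes of dimension $\leq r$.

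Next, apply Lemma~\ref{l:d_for_finite_n_of_c} to the finite collection $\{c_1,\ldots,c_N\}$ (regarded as correspondences with source a single coproduct) to obtain integers $d_1,\ldots,d_n$ such that every $c_\alpha$ lies in $(Fr_{m_\alpha}^{qf,n})^{<d_1,\ldots,<d_n}(\Spec(F)\times\A^{j_\alpha},(\pt,\emptyset),(X,U);f_1,\ldots,f_n)$. Then apply Lemma~\ref{l:almost_all_v} to each $c_\alpha$: the admissible $v$ form a nonempty Zariski-open subset of $F^d\setminus 0$, and intersecting finitely many such open subsets still leaves a nonempty open set. Choose any $v$ in this intersection; then every $c_\alpha$ lies in $Fr_{m_\alpha}^{<d_1,\ldots,<d_n,v}(\Spec(F)\times\A^{j_\alpha},(X,U);f_1,\ldots,f_n)$.

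The subfunctor $Fr^{<d_1,\ldots,<d_n,v}(-,(X,U);f_1,\ldots,f_n)$ is preserved by the cosimplicial structure maps $\A^\bullet$, since these are base changes and all the defining conditions (the $c$-defining degree bounds and the quasifiniteness/component condition cut out by $v$) behave well under such base changes. Hence there is a simplicial subset $C_*Fr^{<d_1,\ldots,<d_n,v}(\Spec(F),(X,U);f_1,\ldots,f_n)\hookrightarrow C_*Fr(\Spec(F),(X,U))$ through which the approximating simplicial map, and hence $f$, factors. Applying Lemma~\ref{l:contractible}, the $n+1$ natural $\A^1$-homotopies $\tilde h_0,\ldots,\tilde h_n$ assemble, via $C_*$, into a chain of simplicial homotopies inside $C_*Fr(\Spec(F),(X,U))$ running from the inclusion of this simplicial subset to the constant map at $\emptyset$. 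Concatenating them with $f$ gives a null-homotopy, so $[f]=0$.

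The main obstacle is the bookkeeping at the interface between naive $\A^1$-homotopies and simplicial homotopies in $C_*$: one must verify that a natural transformation $Fr^{<d_1,\ldots,<d_n,v}(-,(X,U))\to Fr(-\times\A^1,(X,U))$ really gives a simplicial homotopy between two maps of simplicial sets after applying $C_*$, and that these $n+1$ homotopies can be concatenated (each subsequent homotopy being defined on the target of the previous one). The first is a standard consequence of the $\A^1$-based construction of $C_*$, identical in spirit to the final step of Proof 1 of Lemma~\ref{l:connectedness}; the second requires noting that $\tilde h_i$ is defined on $Fr^{<d_1,\ldots,<d_{n-i},v}$ and lands in $Fr^{<d_1,\ldots,<d_{n-i-1},v}$ (up to deletion of one quasifiniteness coordinate), which is exactly how the homotopies in Lemma~\ref{l:contractible} are stratified.
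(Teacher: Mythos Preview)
Your proof is correct and follows essentially the same route as the paper's: reduce a map from $S^k$ to finitely many nondegenerate simplices, use Lemma~\ref{l:d_for_finite_n_of_c} and Lemma~\ref{l:almost_all_v} to land in a common $Fr_m^{<d_1,\ldots,<d_n,v}$, then contract via Lemma~\ref{l:contractible}. The paper's version is terser---it simply passes to the $r$-skeleton and invokes compactness rather than simplicial approximation, and it tacitly chooses a single common level $m$ for all the simplices (which you should also do before taking the coproduct in your application of Lemma~\ref{l:d_for_finite_n_of_c}, since that lemma's proof literally forms $\coprod c_i$)---but the substance is identical, and your extra remarks on why the subfunctor is stable under the cosimplicial maps and how the $\A^1$-homotopies concatenate are legitimate clarifications of points the paper leaves implicit.
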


\begin{proof}
Let $p:S^k \to |C_*Fr(Spec(F),(X,U))|$ be a continuous map of topological spaces, $k \leq r.$ Up to (pointed) homotopy, we may assume that $p(S^k)$ lies in the $r$-skeleton. By compactness, $p(S^k)$ only intersects a finite number of nondegenerate simplices. Let $m$ be the level at which all these simplices are represented by level $m$ framed correspondences. 

By Lemma~\ref{l:d_for_finite_n_of_c}, they all lie in some $(Fr_m^{qf,n})^{<d_1, \ldots <d_n}(-,(pt,\emptyset),(X,U);f_1,\ldots f_n).$ By Lemma~\ref{l:almost_all_v}, they all lie in $Fr_m^{<d_1, \ldots <d_n,v}(-,(X,U);f_1,\ldots f_n)$ for almost any $v.$ Choose such a $v.$ 

Now $p$ factors through the simplicial subset $C_*Fr_m^{<d_1, \ldots <d_n,v}(-,(X,U);f_1,\dots f_n).$ But this simplicial subset can be contracted within the larger simplicial set, by Lemma~\ref{l:contractible}. Thus the map $p$ is homotopical to the trivial one, and the simplicial set $C_*Fr(Spec(F),(X,U))$ is $r$-connected.
\end{proof}

We now move on to proving Proposition~\ref{p:connectedness}.

\begin{proof}
By Lemma~\ref{l:connectedness_acyclicity}, proving the Proposition is equivalent to showing local $r$-acyclicity of the spectrum $M_{fr}((X,U)),$ i.e. to showing that for a smooth $B,$ and $b \in B,$ $H_k(M_{fr}((X,U)))(Spec( (\cc O_{B,b})^h_b) )=0.$ 

By Lemma~\ref{l:acyclicity_field}, this group maps injectively into $H_k(M_{fr}((X,U)))(Spec(F)),$ where $F$ is the fraction field of the local henselian ring. Hence it suffices to show that $C_*Fr(Spec(F),(X,U))$ is locally $r$-connected.

By Lemma~\ref{l:cover}, it suffices to consider the case when $X$ is quasiaffine and \'etale over some $\A^d.$ We may thus suppose we are under the conditions~\ref{c:local}. 

By Lemma~\ref{l:connected_over_field}, in the case of the conditions~\ref{c:local}, $C_*Fr(Spec(F),(X,U))$ is locally $r$-connected.
\end{proof}

\appendix
	
\section{Mayer-Vietoris triangles}

We need the following statement concerning Mayer-Vietoris triangles:

\begin{proposition} \label{p:MV}
Let

\[\begin{tikzcd}
V-Z \arrow[d] \arrow[r,hook] & V \arrow[d]\\
X-Z \arrow[r,hook] & X\\
\end{tikzcd}\]
be a Nisnevich square in $Sm/k.$
\begin{enumerate}
\item{The natural morphisms

\[C_*\ZF (V - Z) \to  C_*\ZF (X - Z) \oplus C_*\ZF (V)_{Nis} \to C_*\ZF (X) ,\]

induced by the inclusions define a distinguished triangle in the category of complexes of Nisnevich sheaves of abelian groups.}
\item{Let $W \subset X$ be an open subscheme. The natural morphisms

\[\begin{tikzcd}
C_*\ZF ((V - Z,W \times_X V - Z \cap (W \times_X V))) \arrow[d] \\
 C_*\ZF ((X - Z,W-Z\cap W)) \oplus C_*\ZF ((V,W \times_X V)) \arrow[d] \\
 C_*\ZF ((X,W)) ,
\end{tikzcd}\]

induced by the inclusions of pairs define a distinguished triangle in the category of complexes of Nisnevich sheaves of abelian groups.}
\end{enumerate}
\end{proposition}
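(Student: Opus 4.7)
My plan is to derive both distinguished triangles from a level-wise Mayer-Vietoris short exact sequence evaluated on Nisnevich stalks, which automatically yields a distinguished triangle after applying the Suslin complex $C_*$.

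For Part~(1), it suffices to check that for every henselian local $B = \cc O_{B_0,b_0}^h$ and every $m \geq 0$, the sequence of abelian groups
\[0 \to \ZF_m(B, V-Z) \to \ZF_m(B, X-Z) \oplus \ZF_m(B, V) \to \ZF_m(B, X) \to 0\]
is exact. Left exactness and exactness in the middle are formal from the explicit representative $(Z_c, W_c, \phi, g)$, whose data encodes where the support is carried. Surjectivity is the main content. Given a class $c = [Z_c, W_c, \phi, g] \in \ZF_m(B, X)$, the support $Z_c$ is finite over the henselian local $B$ and hence a finite disjoint union of henselian local schemes. Sort its connected components according to whether the image of the closed point under $g$ lies in $Z$ or in $X-Z$; this yields a clopen decomposition $Z_c = Z_c^0 \sqcup Z_c^1$. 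Let $W_c^i = W_c - Z_c^{1-i}$. Then $W_c^0$ is an \'etale neighborhood of $Z_c^0$ on which, after further shrinking, $g$ factors through $X-Z$ (as avoiding $Z$ is an open condition and $Z_c^0$ avoids $Z$ at its closed points), and $W_c^1$ is an \'etale neighborhood of $Z_c^1$ on which $g$ lifts through $V \to X$ (since $V \to X$ is an isomorphism over $Z$, yielding a section there which extends to a Zariski neighborhood of $Z_c^1$ in $W_c^1$). The resulting classes in $\ZF_m(B, X-Z)$ and $\ZF_m(B, V)$ sum to $c$.

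For Part~(2), the same decomposition applies to the refined support $Z_c = Z(\phi) \cap g^{-1}(S)$ with $S = X - W$. Sorting its connected components by whether the image under $g$ meets $Z$ decomposes $c \in \ZF_m(B, (X, W))$ into contributions lying in $\ZF_m(B, (V, V \cap W))$ and $\ZF_m(B, (X-Z, (X-Z) \cap W))$, agreeing on $\ZF_m(B, (V-Z, (V \cap W) - Z \cap (V \cap W)))$. The henselian lifting is verbatim as in Part~(1), since the condition $V \times_X Z \cong Z$ and henselian local lifting only use the Nisnevich square structure, not the pair structure. Applying $C_*$ level-wise and passing to the colimit over $m$ turns these short exact sequences into a short exact sequence of complexes of Nisnevich sheaves, which is the desired distinguished triangle.

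The main obstacle I expect is making rigorous the refinement step where $g|_{W_c^1}$ is lifted across the \'etale morphism $V \to X$: one must produce a genuine morphism of schemes on an open neighborhood, not merely a section over the henselian support $Z_c^1$. This relies on the standard fact that a section of an \'etale map defined on a closed subscheme extends uniquely to a Zariski open neighborhood, combined with the Nisnevich square property $V \times_X Z \cong Z$ restricted to the image $g(Z_c^1) \subset Z$.
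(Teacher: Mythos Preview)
Your lifting argument over henselian local $B$ is correct and is precisely what the paper proves as Lemma~\ref{l:ZF-MV}. The gap is in the last sentence, where you claim that applying $C_*$ level-wise yields a short exact sequence of complexes of Nisnevich sheaves. This fails: $C_n$ evaluates a presheaf at $\Delta^n\times(-)$, and $\Delta^n\times B$ is not henselian local even when $B$ is, so the support of a class in $\ZF_m(\Delta^n\times B,X)$ need not split into henselian local pieces and your decomposition breaks down. For a concrete failure, take $B=\Spec k$, $n=1$, $m=0$, $X=\A^1$, $Z=\{0,1\}$, $V=(\A^1-\{0\})\sqcup(\A^1-\{1\})$; the identity $\A^1\to\A^1$ gives a basis element of $C_1\ZF_0(\Spec k,X)=\ZF_0(\A^1,\A^1)$ that is not in the image of $C_1\ZF_0(\Spec k,X-Z)\oplus C_1\ZF_0(\Spec k,V)$, and $\Spec k$ admits no nontrivial Nisnevich covers. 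So the degree-wise sequence of sheaves is genuinely not short exact, and you do not get a distinguished triangle for free.

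The paper fills this gap with an additional argument (Lemmas~\ref{l:ZF -> C_*ZF} and~\ref{l:G->C_*G}) that is not formal. Writing $\cc G$ for the presheaf cokernel of $\ZF(X-Z)\oplus\ZF(V)\to\ZF(X)$, one has $\cc G_{Nis}=0$ from your argument, but one must still show that $(C_*\cc G)_{Nis}$ is \emph{acyclic}. Because the sheaves $\ZF((X,U))$ are pre-quasistable framed presheaves, the cohomology presheaves $H^i(C_*\cc G)$ are homotopy invariant quasistable presheaves with $\ZF_*$-transfers; invoking \cite[Theorem~1.1]{GPHoInv} on strict homotopy invariance of their Nisnevich cohomology, together with an induction via \cite[Proposition~12.19]{VoLec}, then forces each $H^i(C_*\cc G)_{Nis}$ to vanish. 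Only with this input does the sequence $C_*\ZF(V-Z)\to C_*\ZF(X-Z)\oplus C_*\ZF(V)\to C_*\ZF(X)$ become a distinguished triangle in the derived category of Nisnevich sheaves.
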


\begin{proof}
The Proposition follows from two lemmas~\ref {l:ZF-MV} and~\ref{l:ZF -> C_*ZF} below, considering Note~\ref{n:ZF-good}.
\end{proof}

\begin{lemma} \label{l:ZF-MV}
Under the conditions of Proposition~\ref{p:MV}
\begin{enumerate}
\item{The sequence of Nisnevich sheaves of abelian groups
\[0 \to \ZF (V - Z) \to  \ZF (X - Z) \oplus \ZF (V) \to \ZF (X) \to 0\]
induced by inclusions of schemes is exact.}
\item{Let $W \subset X$ be an open subscheme. The sequence of Nisnevich sheaves of abelian groups 

\[\begin{tikzcd}
0 \arrow[d] \\
\ZF ((V - Z,W \times_X V - Z \cap (W \times_X V))) \arrow[d] \\
 \ZF ((X - Z,W-Z\cap W)) \oplus \ZF ((V,W \times_X V)) \arrow[d]\\
\ZF ((X,W)) \arrow[d]\\
0
\end{tikzcd}\]

induced by inclusions of pairs is exact.}
\end{enumerate}
\end{lemma}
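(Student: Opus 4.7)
The plan is to check exactness stalkwise in the Nisnevich topology, which reduces the statement to verifying that the sequence of sections over the spectrum of an arbitrary henselian local ring $R$ on a $k$-smooth variety is exact. The key feature of this setting is that the support $Y$ of any framed correspondence $c = (Y, W, \phi, g)$ from $\Spec R$ to $X$ is finite over $\Spec R$ and hence decomposes as a disjoint union of local henselian schemes; I partition these components according to whether their closed-point image lies in $Z$ or in $X - Z$, writing $c = c_{\text{out}} + c_{\text{in}}$ accordingly.

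For part (1), the $c_{\text{out}}$-piece descends uniquely to a correspondence to $X - Z$: each component $Y_i$ of $Y_{\text{out}}$ is a local scheme whose closed point avoids $Z$, and since every nonempty closed subset of a local scheme contains the closed point, one has $g^{-1}(Z) \cap Y_i = \emptyset$, so after refining $W$ the map $g$ factors through $X - Z$. The $c_{\text{in}}$-piece is lifted to $V$ using the Nisnevich square property $V \times_X Z \xrightarrow{\sim} Z$: applying Hensel's lemma to the étale morphism $W \times_X V \to W$ with the section over $Y_{\text{in}}$ provided by the isomorphism $V \times_X Z \xrightarrow{\sim} Z$, we obtain, after shrinking $W$ to a suitable étale neighbourhood $W'$ of $Y_{\text{in}}$, a lift $g' \colon W' \to V$. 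This decomposition yields surjectivity of $\beta \colon \ZF(X-Z) \oplus \ZF(V) \to \ZF(X)$ stalkwise. Injectivity of $\alpha \colon \ZF(V-Z) \to \ZF(X-Z) \oplus \ZF(V)$ is immediate since $\alpha$ only changes the target of correspondences without altering the underlying data.

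For middle exactness, given $(a,b) \in \ker \beta$, I split $b = b_{\text{out}} + b_{\text{in}}$ according to the $V = (V - Z) \cup (V \times_X Z)$ decomposition of component supports. Since $\ZF(X)(\Spec R)$ is a free abelian group on framed correspondences (with connected support), and since correspondences whose supports have closed-point images in the disjoint subsets $X - Z$ and $Z$ lie in distinct basis elements, the relation $j_1 a + j_2 b_{\text{out}} + j_2 b_{\text{in}} = 0$ splits into two separate relations: one over $X - Z$ and one over $Z$. The relation over $X - Z$, together with the isomorphism $V - Z \xrightarrow{\sim} X - Z$, exhibits the pair $(a, b_{\text{out}})$ as $\alpha(c)$ for some $c \in \ZF(V - Z)(\Spec R)$. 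The relation over $Z$ forces $b_{\text{in}} = 0$, because $j_2$ is injective on correspondences with support over $V \times_X Z$: two morphisms from a reduced scheme to $V$ whose compositions with the étale map $V \to X$ agree must themselves coincide, by unramifiedness of étale morphisms.

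Part (2) goes through by the same argument: the pair constraint that the support lies in $g^{-1}(X - W)$ is orthogonal to the $Z$-versus-$(X-Z)$ splitting of closed-point images, and the henselian lift to $V$ leaves $g$ unchanged on $W'$ set-theoretically, so the pair constraint is preserved throughout the decomposition and lifting procedure. The main obstacle I anticipate is the careful verification in part (1) that the henselian lift produces a genuinely well-defined framed correspondence after refining $W$---specifically, that the quadruple $(Y_{\text{in}}, W', \phi|_{W'}, g' \colon W' \to V)$ is equivalent to the restriction $c_{\text{in}}$ once pushed along $j_2$, and that all the shrinkings of $W$ used in the decomposition can be made compatibly. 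Everything else is careful bookkeeping with support decompositions.
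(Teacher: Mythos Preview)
Your approach is essentially the same as the paper's: both reduce to stalks over henselian local schemes, decompose the support of a framed correspondence into local henselian components, and use the Nisnevich square to lift (to $V$) or descend (to $X-Z$) each component. The paper's proof is terser---it phrases the argument as the square of basis sheaves $F_n$ being simultaneously a pullback and a pushout in sheaves of sets---but the content is the same support-splitting you carry out.

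One small slip: in the middle-exactness step you invoke ``the isomorphism $V - Z \xrightarrow{\sim} X - Z$'', but a Nisnevich square only requires $V \times_X Z \to Z$ to be an isomorphism; the map $V - Z \to X - Z$ is in general only \'etale. Fortunately your argument does not actually need this: once $b_{\mathrm{in}} = 0$, set $c := b_{\mathrm{out}} \in \ZF(V-Z)$; the relation $j_1(a) = j_2(b_{\mathrm{out}})$ in $\ZF(X)$, combined with the injectivity of the open-immersion pushforward $\ZF(X-Z) \hookrightarrow \ZF(X)$, forces $a$ to equal the image of $c$ under $V-Z \to X-Z$. So the exhibited preimage is $c = b_{\mathrm{out}}$ and no isomorphism claim is needed.
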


\begin{proof}
(1)

The sequence is derived from the pullback and pushout squares of sheaves of sets
\[\begin{tikzcd}
F_n (-,V - Z) \arrow[r] \arrow[d] & F_n (-,V) \arrow[d] \\
F_n (-,X - Z) \arrow[r] & F_n(-,X) 
\end{tikzcd}\]
The pushout property is reduced to the fact that over a local henselian scheme the support $Z$ of a framed correspondence is also a local henselian scheme, and therefore over it the Nisnevich square is trivial. Having a section over $Z$ allows us to lift the correspondence either to a correspondence to $V$, or to $X-Z.$

(2)

The sequence is derived from the pullback and pushout squares of sheaves of sets
\[\begin{tikzcd}
F_n (-,(V - Z,W \times_X V - Z \cap (W \times_X V))) \arrow[r] \arrow[d] & F_n (-,(V,W \times_X V)) \arrow[d] \\
F_n (-,(X - Z,W-Z\cap W)) \arrow[r] & F_n(-,(X,W)) 
\end{tikzcd}\]
The pushout property is reduced to the fact that over a local henselian scheme the support $Z$ of a framed correspondence is also a local henselian scheme, and therefore over it the Nisnevich square is trivial. Having a section over $Z$ allows us to lift the correspondence either to a correspondence to $(V,W \times_X V)$, or to $(X - Z,W-Z\cap W).$

\end{proof}

\begin{definition}
a presheaf $\cc{G}$ of abelian groups with $\ZF_*$-transfers is called \textbf{pre-quasistable}, if the presheaves $H^i(C_* \cc{G})$ are quasistable (and obviously homotopy invariant).
\end{definition}

\begin{note}
The category of pre-quasistable presheaves is closed under limits, colimits and extensions.
\end{note}

\begin{note} \label{n:ZF-good}
The sheaves of the type $\bb{Z}F ((X,U))$ are pre-quasistable. This is a consequence of the sheaves $F((X,U))$ being quasi-stable "up to naive $\A^1$-homotopy".
\end{note}

\begin{lemma} \label{l:ZF -> C_*ZF}
Let 

\[0 \to \cc{F}_1 \to \cc{F}_2 \to \cc{F}_3 \to 0\]

 be an exact sequence of pre-quasistable Nisnevich sheaves of abelian groups with $\ZF_*$-transfers. Then the morphisms

\[C_* \cc{F}_1 \to C_* \cc{F}_2 \to C_* \cc{F}_3 \]

define a distinguished triangle in the category of complexes of Nisnevich sheaves of abelian groups with transfers.
\end{lemma}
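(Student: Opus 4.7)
\begin{plan}

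The plan is to split the sequence into two pieces via a presheaf cokernel, and then to handle the resulting discrepancy between presheaf and sheaf quotients via the framed strict $\A^1$-invariance theorem.

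First, I would introduce the presheaf cokernel $\cc{Q}=\cc{F}_2/\cc{F}_1$. Since the category of pre-quasistable presheaves is closed under colimits, $\cc{Q}$ is again a pre-quasistable presheaf with $\ZF_*$-transfers, and its Nisnevich sheafification equals $\cc{F}_3$. Because the Suslin complex $C_*$ is exact on presheaves and Nisnevich sheafification $a$ is exact, applying them to the short exact sequence $0\to\cc{F}_1\to\cc{F}_2\to\cc{Q}\to 0$ of presheaves produces a short exact sequence of sheaves of complexes
\[0\to C_*\cc{F}_1\to C_*\cc{F}_2\to aC_*\cc{Q}\to 0,\]
using $aC_n\cc{F}_i=C_n\cc{F}_i$ for $i=1,2$ since those are already sheaves. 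This sequence realizes a distinguished triangle in the derived category of Nisnevich sheaves of complexes.

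It then remains to identify $aC_*\cc{Q}$ with $C_*\cc{F}_3$ in the derived category. Let $\cc{K}$ denote the presheaf cokernel of the injection $\cc{Q}\hookrightarrow\cc{F}_3$; then $a\cc{K}=0$, and $\cc{K}$ is again a pre-quasistable $\ZF_*$-presheaf (by closure under colimits). Running the same sectionwise-then-sheafification argument on $0\to\cc{Q}\to\cc{F}_3\to\cc{K}\to 0$ shows that the cone of the natural morphism $aC_*\cc{Q}\to C_*\cc{F}_3$ is $aC_*\cc{K}$, so it suffices to prove that $aC_*\cc{K}$ is acyclic, i.e.\ that $aH^i(C_*\cc{K})=0$ for every $i$.

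The cohomology presheaves $H^i(C_*\cc{K})$ are homotopy invariant (an automatic property of the Suslin complex) and quasistable (by pre-quasistability of $\cc{K}$), hence they are homotopy invariant quasistable $\ZF_*$-presheaves. By the framed strict $\A^1$-invariance theorem \cite[Theorem 3.15(3')]{GPHoInv}, their Nisnevich sheafifications embed sectionwise into their values at generic points, so the desired vanishing reduces to showing $H^i(\cc{K}(\Delta^\bullet_F))=0$ for every finitely generated field extension $F/k$. The main obstacle is precisely this final vanishing: the hypothesis $a\cc{K}=0$ yields only $\cc{K}(\Spec F)=0$ and not the full acyclicity of the Suslin complex $\cc{K}(\Delta^\bullet_F)$. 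The gap is bridged by a further application of the framed strict homotopy invariance machinery of~\cite{GPHoInv}, which implies that the Suslin complex of any Nisnevich-locally zero pre-quasistable $\ZF_*$-presheaf is Nisnevich-locally acyclic, yielding $aC_*\cc{K}\simeq 0$ as required.

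\end{plan}
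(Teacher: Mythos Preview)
Your structural reduction is the same as the paper's: both arguments come down to showing that if $\cc K$ is a pre-quasistable $\ZF_*$-presheaf with $a\cc K=0$, then $aC_*\cc K$ is acyclic. The paper isolates this as a separate Lemma~\ref{l:G->C_*G}. Where you diverge is in how you attempt to prove this key step, and there the argument is incomplete.

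Your route via \cite[Theorem~3.15(3')]{GPHoInv} reduces to showing $H^i(\cc K(\Delta^\bullet_F))=0$ for every finitely generated field $F/k$. You correctly identify the obstacle: from $a\cc K=0$ you only obtain $\cc K(\Spec F)=0$, which settles $i=0$ (since $H^0(C_*\cc K)(F)$ is a quotient of $\cc K(F)$) but gives no control over $\cc K(\A^n_F)$ for $n>0$, hence none over the higher homology. Your final sentence then asserts that ``the framed strict homotopy invariance machinery of~\cite{GPHoInv}'' closes the gap, but this is precisely the statement to be proved, and nothing in~\cite{GPHoInv} alone yields it directly.

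The paper's proof of Lemma~\ref{l:G->C_*G} avoids the pointwise computation entirely. It argues by induction on $i$: assuming $(H^j(C_*\cc K))_{Nis}=0$ for $j<i$, the truncation gives a morphism $f:(C_*\cc K)_{Nis}\to (H^i(C_*\cc K))_{Nis}[i]$ in $D^-(Sm_{Nis})$ inducing an isomorphism on $H^i$. Since the target has homotopy invariant Nisnevich cohomology by \cite[Theorem~1.1]{GPHoInv}, one may apply \cite[Proposition~12.19]{VoLec} to identify
\[
\Hom_{D^-_{Nis}}\bigl((C_*\cc K)_{Nis},(H^i(C_*\cc K))_{Nis}[i]\bigr)\;\cong\;\Hom_{D^-_{Nis}}\bigl(\cc K_{Nis},(H^i(C_*\cc K))_{Nis}[i]\bigr)=0,
\]
the last equality because $\cc K_{Nis}=0$. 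Hence $f=0$, forcing $(H^i(C_*\cc K))_{Nis}=0$. The essential input you are missing is the Voevodsky-style adjunction \cite[Proposition~12.19]{VoLec}, which replaces the Suslin complex by the single sheaf $\cc K_{Nis}$ in the $\Hom$ computation; this is what converts the hypothesis $a\cc K=0$ into the desired acyclicity without ever evaluating $\cc K$ on affine spaces.
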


\begin{proof}
The exact sequence of sheaves gives a sequence of complexes of presheaves

\[0 \to C_* \cc{F}_1 \to C_* \cc{F}_2 \to C_* \cc{F}_3 \to 0,\]

 in which the cohomology are the presheaves of the kind $C_*(\cc G),$ where $\cc G$ is a quasistable presheaf with $(\cc G)_{Nis} = 0.$
By Lemma~\ref{l:G->C_*G} below, the presheaves $H^i(C_* \cc G)_{Nis} = 0.$ 

Therefore, the sequence of complexes of sheaves

\[0 \to C_* \cc{F}_1 \to C_* \cc{F}_2 \to C_* \cc{F}_3 \to 0\]

has acyclic cohomology. Thus this sequence of complexes defines a distinguished triangle in the derived category.
\end{proof}

\begin{lemma} \label{l:G->C_*G}
Let $\cc G$ be a pre-quasistable presheaf on the category $\ZF_*$, such that $\cc G_{Nis}=0.$ Then the complex of sheaves $(C_* \cc G)_{Nis}$ is acyclic.
\end{lemma}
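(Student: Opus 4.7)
My plan is to reduce acyclicity of $(C_*\cc G)_{\operatorname{Nis}}$ to the vanishing of the Nisnevich sheafifications of the cohomology presheaves, and then invoke the strict homotopy invariance results already cited in the paper. Because Nisnevich sheafification is exact on complexes of presheaves of abelian groups, we have $H^i\bigl((C_*\cc G)_{\operatorname{Nis}}\bigr)=(H^i(C_*\cc G))_{\operatorname{Nis}}$, so the task reduces to showing $(\mathcal H_i)_{\operatorname{Nis}}=0$ for every $i\geq 0$, where $\mathcal H_i:=H^i(C_*\cc G)$. By the pre-quasistability hypothesis, each $\mathcal H_i$ is a homotopy invariant quasistable presheaf of abelian groups with $\ZF_*$-transfers inherited from $\cc G$ via the cosimplicial scheme $\Delta^\bullet$.

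The second step will apply the Garkusha--Panin injectivity theorem (\cite[Theorem 3.15(3')]{GPHoInv}, the same one used in Lemma~\ref{l:acyclicity_field}): for any smooth local henselian $(\cc O_{B,b})^h_b$ over $k$, the natural map
\[
\mathcal H_i\bigl((\cc O_{B,b})^h_b\bigr)\hookrightarrow \mathcal H_i\bigl(\operatorname{Spec} k((\cc O_{B,b})^h_b)\bigr)
\]
is injective. Since every Nisnevich stalk of $\mathcal H_i$ arises in this way, $(\mathcal H_i)_{\operatorname{Nis}}=0$ will follow from $\mathcal H_i(\operatorname{Spec} K)=0$ for every finitely generated field extension $K/k$. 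One then computes $\mathcal H_i(\operatorname{Spec} K)=H^i(\cc G(\Delta^\bullet_K))$, and restricts $\cc G$ to $Sm/K$. The restriction $\cc G|_{Sm/K}$ inherits $\ZF_*$-transfers and pre-quasistability, and its Nisnevich sheafification over $Sm/K$ is again zero (Nisnevich covers of $K$-schemes being Nisnevich covers over $k$). Iterating the above reasoning inside $Sm/K$---or equivalently using a moving-lemma type argument that exploits the quasistable $\ZF_*$-transfer structure to trivialize, via a chain of framed homotopies, any Nisnevich-locally-zero section of $\cc G$ at $\A^n_K$---yields the desired vanishing $H^i(\cc G(\Delta^\bullet_K))=0$.

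The principal difficulty I anticipate is precisely this last step. The vanishing of $\cc G_{\operatorname{Nis}}$ does not imply vanishing of the presheaf values $\cc G(\A^n_K)$ outright, and the Suslin complex is not obviously compatible with Nisnevich sheafification in the outer variable: the sheafification of $V\mapsto\cc G(\Delta^n\times V)$ need not be computed levelwise from $\cc G_{\operatorname{Nis}}$, because a Nisnevich cover of $\Delta^n\times V$ killing a given section does not descend to a Nisnevich cover of $V$. It is precisely the combination of pre-quasistability and the $\ZF_*$-transfer structure on $\cc G$ that allows the Garkusha--Panin machinery to transport local vanishing through the Suslin complex; without these hypotheses the statement would fail.
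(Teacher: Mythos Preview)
Your reduction to showing $(\mathcal H_i)_{\operatorname{Nis}}=0$ is correct, and invoking \cite[Theorem~3.15(3')]{GPHoInv} to pass to fields is a reasonable first move. But the final step is a genuine gap, and you have essentially diagnosed it yourself: after reducing to $\mathcal H_i(\operatorname{Spec} K)=H^i(\cc G(\Delta^\bullet_K))$, the ``iterate inside $Sm/K$'' idea is circular. Running the same injectivity argument over $K$ reduces the vanishing of $(\mathcal H_i|_{Sm/K})_{\operatorname{Nis}}$ to vanishing over fields $L/K$, which is exactly the statement you started with. The alternative ``moving-lemma type argument'' you gesture at is not supplied, and there is no off-the-shelf lemma in the cited literature that kills a Nisnevich-locally-trivial class in $\cc G(\A^n_K)$ by a naive chain of $\A^1$-homotopies; the whole point of the Suslin complex is to impose such homotopies formally, not to produce them geometrically from $\cc G_{\operatorname{Nis}}=0$ alone.

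The paper's proof sidesteps this entirely by working in the derived category rather than evaluating at fields. One argues by induction on $i$: assuming $H^j(C_*\cc G)_{\operatorname{Nis}}=0$ for $j<i$, the truncation $\tau_i(C_*\cc G)_{\operatorname{Nis}}\to (C_*\cc G)_{\operatorname{Nis}}$ is a quasi-isomorphism, so the projection onto the top cohomology gives a morphism $f\colon (C_*\cc G)_{\operatorname{Nis}}\to H^i(C_*\cc G)_{\operatorname{Nis}}$ in $D^-(Sm_{\operatorname{Nis}})$ which is an isomorphism on $H^i$. Now one uses the other main theorem of \cite{GPHoInv}, namely \cite[Theorem~1.1]{GPHoInv}: the Nisnevich sheaf $H^i(C_*\cc G)_{\operatorname{Nis}}$ is \emph{strictly} $\A^1$-invariant (all its Nisnevich cohomology presheaves are homotopy invariant). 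For such a target, Voevodsky's \cite[Proposition~12.19]{VoLec} identifies
\[
\Hom_{D^-_{\operatorname{Nis}}}\bigl((C_*\cc G)_{\operatorname{Nis}},\,H^i(C_*\cc G)_{\operatorname{Nis}}\bigr)
=\Hom_{D^-_{\operatorname{Nis}}}\bigl(\cc G_{\operatorname{Nis}},\,H^i(C_*\cc G)_{\operatorname{Nis}}\bigr)=0,
\]
since $\cc G_{\operatorname{Nis}}=0$. Hence $f=0$, forcing $H^i(C_*\cc G)_{\operatorname{Nis}}=0$. The missing idea in your attempt is precisely this: replace the pointwise computation by the derived $\Hom$ into a strictly $\A^1$-invariant target, where the canonical map $\cc G\to C_*\cc G$ becomes an isomorphism.
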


\begin{proof}
We show that for each $i,$ $H^i(C_* \cc G)_{Nis}=0.$ We argue by induction on $i$. For $i<0$ the statement is obvious. Suppose that it is known for all $j<i.$ Then $\tau_i(C_* \cc G)_{Nis} \to C_* \cc G_{Nis}$ is a quasiisomorphism, where 

\[\tau_i(C_* \cc G)_{Nis} = \cdots \to C_{i-1} \cc G_{Nis} \to Z_i(C_* \cc G)_{Nis} \to 0 \to \cdots .\]

There is also an obvious morphism $f: \tau_i(C_* \cc G)_{Nis} \to H^i(C_* \cc G)_{Nis},$ which induces an isomorphism on the $i$th cohomology sheaf. It suffices to show that the morphism $f$ is equal to $0.$ Note that it defines a morphism in $D^-(Sm_{Nis})$ from $(C_* \cc G)_{Nis}$ to $H_i(C_* \cc G)_{Nis}.$ 

The presheaves $H^i(C_* \cc G)$ are homotopy invariant and quasistable presheaves with $\ZF_*$-trasnfers. By~\cite[Theorem 1.1]{GPHoInv},  $H^r(-,H^i(C_* \cc G)_{Nis})$ are homotopy invariant for all $r.$ 

Now, by~\cite[Proposition 12.19]{VoLec} for normal Nisnevich sheaves, 

\[\begin{tikzcd}
Hom_{D^-_{Nis}}((C_* \cc G)_{Nis},H^i(C_* \cc G)_{Nis} ) \arrow[d,equals] \\
 Hom_{D^-_{Nis}} (G_{Nis},H^i(C_* \cc G)_{Nis}) \arrow[d,equals] \\
Hom_{D^-_{Nis}} (0,H^i(C_* \cc G)_{Nis}) \arrow[d,equals] \\
~0,
\end{tikzcd}\]

since $\cc G_{Nis}=0.$ Therefore, $f=0,$ and $H^i(C_* \cc G)_{Nis}=0,$ proving the induction step.

\end{proof}

We also prove a homotopical version of the Mayer-Vietoris triangle:
\begin{proposition} \label{p:HoMV}
 
Let

\[\begin{tikzcd}
V-Z \arrow[d] \arrow[r,hook] & V \arrow[d]\\
X-Z \arrow[r,hook] & X\\
\end{tikzcd}\]
be a Nisnevich triangle in $Sm/k.$
\begin{enumerate}
\item{The natural morphisms

\[M_{fr} (V - Z) \to  M_{fr} (X - Z) \vee M_{fr} (V) \to M_{fr} (X) ,\]

induced by inclusions define a distinguished triangle in the stable homotopy category of simplicial Nisnevich sheaves.}
\item{Let $W \subset X$ be an open subscheme. The natural morphisms

\[\begin{tikzcd}
M_{fr}((V - Z,W \times_X V - Z \cap (W \times_X V))) \arrow[d] \\
M_{fr} ((X - Z,W-Z\cap W)) \vee M_{fr} ((V,W \times_X V)) \arrow[d] \\
 M_{fr} ((X,W)),
\end{tikzcd}\]

induced by inclusions of pairs define a distinguished triangle in the stable homotopy category of simplicial Nisnevich sheaves.}
\end{enumerate}
\end{proposition}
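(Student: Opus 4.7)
My plan is to mimic the strategy used repeatedly in the proof of Theorem~\ref{Quotient-Cone}: establish the Mayer--Vietoris triangle first at the level of linear framed correspondences via Proposition~\ref{p:MV}, transfer it to $LM_{fr}$-spectra, then to $\bb Z M_{fr}$-spectra using Theorem~\ref{ZM_fr_and_LM_fr}, and finally upgrade to $M_{fr}$-spectra via the stable Whitehead theorem together with the $(-1)$-connectedness of Segal spectra. The two items of the proposition will be proved by the identical argument; the pair version contains no additional difficulty, so I phrase the plan for (2) and recover (1) by setting $W=\emptyset$.

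First I would apply Proposition~\ref{p:MV}(2) to obtain a distinguished triangle of complexes of Nisnevich sheaves of abelian groups relating $C_*\ZF(-,(V-Z,\ldots))$, $C_*\ZF(-,(X-Z,\ldots))\oplus C_*\ZF(-,(V,\ldots))$, and $C_*\ZF(-,(X,W))$. Passing to associated Eilenberg--MacLane $S^1$-spectra, and using the identification of $LM_{fr}(\pair B)$ with the Eilenberg--MacLane spectrum of $C_*\ZF(-,\pair B)$ recalled at the beginning of the proof of Theorem~\ref{Quotient-Cone} (via the argument opening~\cite[Section~8]{GNP}), this will yield a distinguished triangle
\[
LM_{fr}((V-Z,\ldots))\to LM_{fr}((X-Z,\ldots))\vee LM_{fr}((V,\ldots))\to LM_{fr}((X,W))
\]
in the local stable homotopy category of $S^1$-spectra. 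Theorem~\ref{ZM_fr_and_LM_fr} then supplies schemewise stable equivalences $l_{\pair B}:\bb Z M_{fr}(\pair B)\to LM_{fr}(\pair B)$ natural in $\pair B$, so the triangle remains distinguished with every $LM_{fr}$ replaced by $\bb Z M_{fr}$.

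Next I would let $C$ denote the homotopy cofiber of the natural map $M_{fr}((V-Z,\ldots))\to M_{fr}((X-Z,\ldots))\vee M_{fr}((V,\ldots))$, with induced comparison map $\gamma:C\to M_{fr}((X,W))$; it then suffices to show that $\gamma$ is a local stable equivalence. Since the free abelian group functor on $S^1$-spectra preserves cofiber sequences, the preceding step identifies $\bb Z\gamma$ as a local stable equivalence. Each $M_{fr}(-)$ in sight is $(-1)$-connected, being a Segal spectrum (see~\cite[Definition~5.2]{GP},~\cite[Proposition~1.4]{Seg}), so $C$ is $(-1)$-connected as the cofiber of a map of $(-1)$-connected spectra (by the long exact sequence of homotopy sheaves). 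The stable Whitehead theorem~\cite[II.6.30]{Sch}, invoked exactly as in the induction base and step of the proof of Theorem~\ref{Quotient-Cone}, will then upgrade $\bb Z\gamma$ being a local stable equivalence to $\gamma$ itself being one.

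The argument contains no serious obstacle beyond bookkeeping: all the geometric content sits in Proposition~\ref{p:MV}, and the rest is the same Hurewicz/Whitehead upgrade already used twice in Section~3. The one step requiring a moment's attention will be verifying that the comparison maps $l_{\pair B}$ and the Eilenberg--MacLane identifications assemble into a natural map of distinguished triangles indexed by the Nisnevich square, so that the cofiber $\bb Z C$ really is identified with $\bb Z M_{fr}((X,W))$ functorially, not just objectwise.
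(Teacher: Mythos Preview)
Your proposal is correct and follows essentially the same route as the paper: apply Proposition~\ref{p:MV} to obtain the Mayer--Vietoris triangle for $C_*\ZF$, pass to the Eilenberg--MacLane spectra $LM_{fr}$, transfer to $\bb ZM_{fr}$ via Theorem~\ref{ZM_fr_and_LM_fr}, and then invoke $(-1)$-connectedness of Segal spectra together with the stable Whitehead theorem~\cite[II.6.30]{Sch} to upgrade to $M_{fr}$. The paper phrases the final step as showing that the quotient map is a local stable weak equivalence rather than introducing the cofiber $C$ and comparison map $\gamma$, but this is only a cosmetic difference.
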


\begin{proof}
We give the proof for pairs, the scheme case is similar.

By Proposition~\ref{p:MV}, the morphism of preshaves of abelian groups

\begin{equation}\label{eq: C_*_MV}
\begin{tikzcd}
 \frac{C_* \ZF ((X - Z,W-Z\cap W)) \oplus C_* \ZF ((V,W \times_X V))}{ C_* \ZF((V - Z,W \times_X V - Z \cap (W \times_X V)))}  \arrow[d] \\
 C_* \ZF ((X,W)) 
\end{tikzcd}
\end{equation}

is a local quasiisomorphsim. 

Similarly to~\cite[Section 8]{GNP}, the $S^1$-spectra 

\begin{multline*}
LM_{fr}((X - Z,W-Z\cap W)),~ LM_{fr}((V,W \times_X V)), \\
LM_{fr}((V - Z,W \times_X V - Z \cap (W \times_X V))) \text{ and }  LM_{fr}((X,W)) 
\end{multline*}

 are Eilenberg-MacLane $S^1$-spectra of complexes 

\begin{multline*}
C_*\bb Z\F((X - Z,W-Z\cap W)),~ C_*\bb Z\F((V,W \times_X V)),\\
 C_*\bb Z\F((V - Z,W \times_X V - Z \cap (W \times_X V))) \text{ and } C_* \ZF ((X,W))
\end{multline*}

  correspondingly. Thus the morphism
\begin{equation*}
\frac{LM_{fr} ((X - Z,W-Z\cap W)) \oplus LM_{fr} ((V,W \times_X V))}{ LM_{fr} ((V - Z,W \times_X V - Z \cap (W \times_X V)))} \to LM_{fr} ((X,W)) 
\end{equation*}
induced by~\eqref{eq: C_*_MV}, is a local stable weak equivalence, 
hence such is the morphism

   \[\frac{\bb ZM_{fr} ((X - Z,W-Z\cap W)) \oplus \bb ZM_{fr} ((V,W \times_X V))}{ \bb ZM_{fr} ((V - Z,W \times_X V - Z \cap (W \times_X V)))} \to \bb ZM_{fr} ((X,W)),\]
	
by Theorem~\ref{ZM_fr_and_LM_fr}. The $S^1$-spectra 

\begin{multline*}
M_{fr} ((X - Z,W-Z\cap W)),~ M_{fr} ((V,W \times_X V)), \\
 M_{fr} ((V - Z,W \times_X V - Z \cap (W \times_X V))),~ M_{fr} ((X,W))
\end{multline*}

are $(-1)$-connected, as they are Segal spectra (see \cite[Definition 5.2]{GP}, \cite[Proposition 1.4]{Seg})

The stable Whitehead theorem~\cite[II.6.30(ii)]{Sch}
implies that the morphism
\begin{equation}\label{eq: M_fr-MV}
\frac{M_{fr} ((X - Z,W-Z\cap W)) \vee M_{fr} ((V,W \times_X V))}{M_{fr} ((V - Z,W \times_X V - Z \cap (W \times_X V)))} \to M_{fr}(M_{fr} ((X,W)))
\end{equation}
is a local stable weak equivalence.

\end{proof}

\end{document}